\documentclass[11pt,english]{article}

\usepackage[margin = 2.5cm]{geometry}

\usepackage{amsthm}
\usepackage{amsmath}
\usepackage{amssymb}
\usepackage{mathtools}
\usepackage{graphicx}
\usepackage[pdftex,colorlinks,backref=page,citecolor=blue,bookmarks=false]{hyperref}
\usepackage{color}
\usepackage{thm-restate}
\usepackage{enumitem}

\usepackage{caption}
\addtolength{\intextsep}{6pt} 
\addtolength{\abovecaptionskip}{10pt}
\addtolength{\belowcaptionskip}{-5pt}
\captionsetup{width=0.8\textwidth, labelfont=bf, parskip=5pt}

\usepackage[square,sort,comma,numbers]{natbib} 
\setlength{\bibsep}{0pt plus 0.1ex} 


\setlength{\parskip}{\medskipamount}
\setlength{\parindent}{0pt}

\usepackage{setspace}
\setstretch{1.16}

\usepackage{cleveref}
\theoremstyle{plain}

\newtheorem*{theorem*}{Theorem}
\newtheorem{theorem}{Theorem}[section]
\crefname{theorem}{Theorem}{Theorems}
\Crefname{theorem}{Theorem}{Theorems}

\newtheorem*{lemma*}{Lemma}
\newtheorem{lemma}[theorem]{Lemma}
\crefname{lemma}{Lemma}{Lemmas}
\Crefname{lemma}{Lemma}{Lemmas}

\newtheorem*{claim*}{Claim}
\newtheorem{claim}[theorem]{Claim}
\crefname{claim}{Claim}{Claims}
\Crefname{claim}{Claim}{Claims}

\newtheorem*{innerclaim*}{Claim}

\crefname{claim}{Claim}{Claims}
\Crefname{claim}{Claim}{Claims}

\newtheorem{proposition}[theorem]{Proposition}
\crefname{proposition}{Proposition}{Propositions}
\Crefname{proposition}{Proposition}{Propositions}

\newtheorem{corollary}[theorem]{Corollary}
\crefname{corollary}{Corollary}{Corollaries}
\Crefname{corollary}{Corollary}{Corollaries}

\newtheorem{conjecture}[theorem]{Conjecture}
\crefname{conjecture}{Conjecture}{Conjectures}
\Crefname{conjecture}{Conjecture}{Conjectures}

\crefname{question}{Question}{Questions}
\Crefname{question}{Question}{Questions}

\crefname{observation}{Observation}{Observations}
\Crefname{observation}{Observation}{Observations}

\crefname{example}{Example}{Examples}
\Crefname{example}{Example}{Examples}

\theoremstyle{definition}

\crefname{problem}{Problem}{Problems}
\Crefname{problem}{Problem}{Problems}

\crefname{definition}{Definition}{Definitions}
\Crefname{definition}{Definition}{Definitions}

\theoremstyle{remark}

\crefname{remark}{Remark}{Remarks}
\Crefname{remark}{Remark}{Remarks}

\usepackage{xpatch}
\xpatchcmd{\proof}{\itshape}{\normalfont\proofnamefont}{}{}
\newcommand{\proofnamefont}{}
\renewcommand{\proofnamefont}{\bfseries}

\usepackage{framed}

\newcommand{\remove}[1]{}

\newcommand{\ceil}[1]{
	\left\lceil #1 \right\rceil
}
\newcommand{\floor}[1]{
	\left\lfloor #1 \right\rfloor
}

\newcommand{\logstar}{\log^\star}

\newcommand{\HH}{\mathcal{H}}
\newcommand{\cH}{\mathcal{H}}
\newcommand{\cM}{\mathcal{M}}

\renewcommand{\P}{\mathcal{P}}
\newcommand{\cP}{\mathcal{P}}
\newcommand{\F}{\mathcal{F}}
\newcommand{\Q}{\mathcal{Q}}
\newcommand{\cQ}{\mathcal{Q}}

\newcommand{\cF}{\mathcal{F}}
\newcommand{\cA}{\mathcal{A}}

\newcommand{\eps}{\varepsilon}

\renewcommand{\setminus}{-}
\newcommand{\Ex}{\mathbb{E}}
\newcommand{\bR}{\mathbb{R}}
\renewcommand{\Pr}{\mathbb{P}}

\def \bd {\bar{d}}

\DeclareMathOperator{\polylog}{polylog}
\DeclareMathOperator{\sep}{sep}
\newcommand{\wsep}{\sep^*}

\begin{document}

\title{Separating path systems of almost linear size}
\author{Shoham Letzter\thanks{
		Department of Mathematics, 
		University College London, 
		Gower Street, London WC1E~6BT, UK. 
		Email: \texttt{s.letzter}@\texttt{ucl.ac.uk}. 
		Research supported by the Royal Society.
	}
}

\date{}
\maketitle

\begin{abstract}

	\setlength{\parskip}{\medskipamount}
    \setlength{\parindent}{0pt}
    \noindent

	A \emph{separating path system} for a graph $G$ is a collection $\mathcal{P}$ of paths in $G$ such that for every two edges $e$ and $f$, there is a path in $\mathcal{P}$ that contains $e$ but not $f$. We show that every $n$-vertex graph has a separating path system of size $O(n \log^* n)$. This improves upon the previous best upper bound of $O(n \log n)$, and makes progress towards a conjecture of Falgas-Ravry--Kittipassorn--Kor\'andi--Letzter--Narayanan and Balogh--Csaba--Martin--Pluh\'ar, according to which an $O(n)$ bound should hold.

\end{abstract}

\section{Introduction} \label{sec:intro}

	Given a set $A$ and a family $\F$ of sets, we say that $\F$ is a \emph{strongly-separating system} for $A$, or that $\cF$ \emph{strongly-separates} $A$, if for every two elements $a, b \in A$ there is a set $S \in \F$ satisfying $a \in S$ and $b \notin S$. Similarly, we say that $\F$ is a \emph{weakly-separating} system for $A$, or that $\F$ \emph{weakly-separates} $A$, if for every two elements $a, b \in A$ there is a set $S \in \F$ that contains exactly one of $a$ and $b$.\footnote{The notion of weak separation is sometimes referred to, simply, as \emph{separation}. We keep the word `weak' in the definition, as our focus will be on the strong version.} In particular, if $\cF$ strongly-separates $A$ then every element of $A$ is in some set in $\cF$, and if it weakly-separates $A$ then there is at most one element in $A$ not contained in any set in $\cF$.

	The notion of separation was introduced by R\'enyi \cite{renyi1961random} in 1961. It is easy to see that the smallest weakly-separating system for a set of size $n$ has size $\ceil{\log_2 n}$, and that the smallest strongly-separating system has size $(1 + o(1))\log_2 n$ (see \cite{spencer1970minimal}). The problem of determining the size of a smallest separating system becomes more interesting when one imposes restrictions on the members of the separating system.
	For example, separating systems with restrictions on the size of their members were studied in \cite{katona1966separating,wegener1979separating,ramsay1996minimal,kundgen2001minimal}, and different extremal questions about separating systems were studied in \cite{bollobas2007separating,bollobas2007separating-b,hansel1964nombre}.

	Another interesting direction is to consider separating systems where the ground set is a graph and the members of the separating system satisfy some graph theoretic properties; see \cite{cai1984separating,foucaud2013identifying,foucaud2013identifying-b,honkala2003cycles} for extremal examples \cite{tapolcai2012link,tapolcai2009monitoring,harvey2007non} for algorithmic examples. Our focus will be on \emph{separating path systems}, where the ground set is the set of edges of a given graph $G$ and the members of the separating system are paths in $G$. 
	Separating path systems arise naturally in the study of network design (see \cite{ahuja2009single,harvey2007non,tapolcai2012link}).
	In the context of extremal graph theory, this variant was proposed by Gyula Katona in the $5^{\text{th}}$ Eml\'ekt\'abla Workshop (2013). 
	Define $\wsep(G)$ to be the size of a smallest weakly-separating path system for $G$, and let $\wsep(n)$ be the maximum of $\wsep(G)$ over all $n$-vertex graphs $G$; define $\sep(G)$ and $\sep(n)$ analogously for strongly-separating path systems.
	Katona asked to determine $\wsep(n)$.

	Separating path systems were first studied by Falgas-Ravry--Kittipassorn--Kor\'andi--Narayanan and the author \cite{falgas2013separating}, who focused on the weak version, and, independently, by Balogh--Csaba--Martin--Pluh\'ar \cite{balogh2016path}, who focused on the strong version. Both groups considered path separation in various classes of graphs, including random graphs and trees. Moreover, both groups conjectured that the answer to Katona's question is $O(n)$.\footnote{Balogh--Csaba--Martin--Pluh\'ar's version is slightly stronger because they use the stronger notion of separation; we suspect that the two variants are equally hard.} 

	\begin{conjecture}[Falgas-Ravry--Kittipassorn--Kor\'andi--Letzter--Narayanan \cite{falgas2013separating} and Balogh--Csaba--Martin--Pluh\'ar \cite{balogh2016path}]
		There exists a constant $c$ such that every graph on $n$ vertices has a (weakly- or strongly-)separating path system of size at most $cn$.
	\end{conjecture}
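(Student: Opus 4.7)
Since the paper's own best result is only $O(n\logstar n)$, a plan for the $O(n)$ conjecture must go beyond the iterative refinement that seems to introduce each such factor. I would attack the conjecture with a three-layer program, combining a coarse decomposition of the host graph with tailored constructions on each piece, then stitching them together.

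The first layer is a \emph{constant-depth decomposition}. Using a density-peeling argument together with Nash-Williams-type forest decomposition, partition $E(G)$ into $O(1)$ edge-disjoint subgraphs $G_{1},\dots,G_{k}$, each of one of two prescribed types: (a) bounded maximum degree, at most some absolute constant $\Delta$, or (b) Eulerian and dense, with a controlled density profile (say, minimum degree at least a fixed fraction of the average). A constant number of peeling rounds, guided by a potential function that tracks local density, should suffice to achieve this with $k$ independent of $n$.

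The second layer is to build a separating path system of size $O(n)$ for each $G_{i}$ \emph{without} any logarithmic iteration. For bounded-degree pieces, linear-arboricity-type results give $O(\Delta)$ linear forests covering $E(G_{i})$; the individual paths then serve as a skeleton to which a constant number of correction paths per vertex are added so that the resulting edge signatures form a Sperner antichain. For dense Eulerian pieces, I would exploit the enormous freedom in choosing Euler tours: random cuts of a carefully chosen tour, adjusted by a constant number of deterministic ``shift'' paths that realign signatures, should produce distinct and pairwise-incomparable labels using only $O(n)$ paths.

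The third layer is \emph{stitching}: even if each family $\P_{i}$ strongly-separates pairs within $G_{i}$, pairs $(e,f)$ with $e\in G_{i}$ and $f\in G_{j}$ are not automatically separated. A natural remedy is to extend each path of $\P_{i}$ along an Eulerian trail in $\bigcup_{j\ne i}G_{j}$, preserving its edges in $G_{i}$ while picking up edges outside $G_{i}$ in a controlled manner that makes cross-pairs separable. The principal obstacle, and the place where a genuinely new idea appears necessary, is the second layer in the dense case: since antichain signatures in $\{0,1\}^{k}$ force $k=\Omega(\log m)$, a bound linear in $n$ can only come from strong structural reuse of paths, and pinpointing the right structural invariant---one that remains linear even as density grows from $O(1)$ to $\Theta(n)$---is the true heart of the conjecture.
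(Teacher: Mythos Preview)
The statement you are addressing is a \emph{conjecture}; the paper does not prove it and explicitly presents $\sep(n)=O(n\logstar n)$ as its main theorem, leaving the $O(n)$ bound open. So there is no ``paper's own proof'' to compare against, and your write-up is a research programme rather than a proof. Judged as a programme, it has at least two genuine gaps.

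First, the claimed ``constant-depth decomposition'' in Layer~1 is not attainable in the generality you assert. Nash--Williams gives a decomposition into $a(G)$ forests, where $a(G)$ is the arboricity, and this can be $\Theta(n)$; density peeling likewise produces a number of layers governed by the degeneracy or the spread of the degree sequence, not by an absolute constant. There is no mechanism by which an arbitrary $n$-vertex graph is split into $O(1)$ edge-disjoint pieces each of which is either of bounded maximum degree or has minimum degree a fixed fraction of its average. (Incidentally, once a piece has bounded maximum degree it has $O(n)$ edges, so the separating system for it is trivially the edge set itself; the linear-arboricity and Sperner-antichain apparatus in Layer~2(a) is unnecessary.)

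Second, you yourself identify Layer~2(b) as the ``true heart of the conjecture'' and leave it open. The Euler-tour-with-random-cuts idea does not, as stated, produce a separating system: cutting a closed trail of length $m$ into $O(n)$ segments gives each edge a signature of weight~$1$ in $\{0,1\}^{O(n)}$, and two edges in the same segment receive identical signatures. The ``shift paths'' are meant to repair this, but you give no argument that $O(n)$ such paths suffice, and the antichain lower bound you quote is precisely the obstruction. In short, the proposal relocates the difficulty rather than resolving it: the step that would carry the entire argument is exactly the step you flag as missing.
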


	In fact, Falgas-Ravry--Kittipassorn--Kor\'andi--Letzter--Narayanan \cite{falgas2013separating} make the bold suggestion that perhaps, for weak separation, $c$ could be taken to be arbitrarily close to $1$, for sufficiently large $n$. If true, that would be tight, as can be seen by considering the complete graph $K_n$. In fact, determining $\wsep(K_n)$ and $\sep(K_n)$ seems hard; see Wickes \cite{wickes2022separating} for the best-known upper bound on $\wsep(K_n)$.

	The study of separating path systems of graphs is related to the well-researched topic of path decompositions. A famous conjecture of Gallai asserts that every connected graph on $n$ vertices can be decomposed into at most $\floor{\frac{n+1}{2}}$ paths; here by \emph{decomposition} we mean that each edge is covered exactly once. A fundamental result of Lov\'asz \cite{lovasz1968covering} proves the following slightly weaker version of Gallai's conjecture. 

	\begin{theorem}[\cite{lovasz1968covering}] \label{thm:lovasz}
		Every $n$-vertex graph can be decomposed into at most $n/2$ paths and cycles. Consequently, every $n$-vertex graph can be decomposed into at most $n$ paths.
	\end{theorem}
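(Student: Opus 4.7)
The plan is to prove the bound on paths and cycles by induction on $|E(G)|$, from which the bound of $n$ paths follows by splitting each cycle into two paths (for example, by deleting one edge and treating that edge as a one-edge path). By working on each connected component separately and using the inequality $\sum \lfloor n_i/2 \rfloor \le \lfloor n/2 \rfloor$ (where $n_i$ are the component orders), we may assume $G$ is connected.

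If $G$ has $2k > 0$ vertices of odd degree, then I would use the standard \emph{Euler trick}: add an auxiliary vertex $w$ adjacent to each odd-degree vertex of $G$; the resulting graph has all even degrees. Take an Euler tour and cut it at $w$, producing $k$ edge-disjoint trails in $G$ whose endpoints are exactly the odd-degree vertices. Since $2k \leq n$, we have $k \leq \lfloor n/2 \rfloor$ trails. Each such trail decomposes greedily into a simple path plus some cycles by scanning from one endpoint and excising a cycle each time a vertex is revisited; careful bookkeeping should ensure the excised cycles, together with those arising from Eulerian components, can be absorbed into the overall $\lfloor n/2 \rfloor$ budget.

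If $G$ is Eulerian (all degrees even and positive), I would prove the auxiliary lemma that a connected Eulerian graph on $n$ vertices decomposes into at most $\lfloor n/2 \rfloor$ cycles by induction on $|E(G)|$. The natural move is to remove a shortest cycle $C$: if at least two vertices of $C$ have degree $2$ in $G$, they become isolated in $G - E(C)$, so the number of non-isolated vertices drops by at least $2$ and the induction closes. The case where the shortest cycle meets no (or only one) degree-$2$ vertex — e.g., $K_5$, where every triangle leaves all five vertices non-isolated — is the genuine obstacle; here I would argue instead by picking an edge $uv$, applying the odd-degree case to $G - uv$ to obtain an Eulerian trail from $u$ to $v$, and then closing it up with $uv$ so that the trail-to-cycles conversion yields the bound (using that the resulting closed trail has length at least the girth and that the count of cycles produced by the stack-based decomposition is controlled by the number of distinct vertices).

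The main obstacle throughout is matching the tight constant $1/2$: both the Euler-trick trails and the Eulerian cycle decomposition need to be pushed through without any $+O(1)$ losses, which forces one to choose the cycles/trails carefully (e.g., shortest cycles, longest trails from odd vertices) and to do the induction on $|V(G)| + |E(G)|$ so that reducing either quantity suffices. Once the first assertion of \Cref{thm:lovasz} is established, the second follows immediately: a decomposition into $p$ paths and $c$ cycles with $p + c \leq n/2$ gives at most $p + 2c \leq 2(p+c) \leq n$ paths after splitting each cycle into two paths by removing one edge.
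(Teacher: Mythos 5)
Your deduction of the second sentence from the first is fine, but the proof of the first sentence has a fundamental gap, concentrated in the Eulerian case. The auxiliary lemma you propose --- that every connected Eulerian graph on $n$ vertices decomposes into at most $\lfloor n/2\rfloor$ cycles --- is essentially the Haj\'os conjecture (which asks for $\lfloor (n-1)/2\rfloor$ cycles and coincides with your bound for odd $n$); this is a long-standing open problem, so it cannot serve as a stepping stone to \Cref{thm:lovasz}. Neither of your suggested routes comes close: deleting a shortest cycle fails in exactly the cases you flag, and converting a closed Euler trail into cycles by greedy (stack-based) excision yields a number of cycles bounded only by $e(G)$ divided by the girth, which for dense Eulerian graphs is $\Theta(n^2)$ rather than $O(n)$; the count is in no way ``controlled by the number of distinct vertices''. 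The same issue sinks the odd-degree case: the $k\le n/2$ trails produced by the Euler trick each split into one path plus an uncontrolled number of excised cycles, and absorbing those cycles into the $\lfloor n/2\rfloor$ budget is not bookkeeping --- it is the entire difficulty, the same difficulty that underlies the Erd\H{o}s--Gallai and Haj\'os conjectures discussed in this paper.

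Note also that the paper does not prove \Cref{thm:lovasz}; it cites Lov\'asz's 1968 paper, so you are reproving a nontrivial classical result. Lov\'asz's argument is quite different from your sketch: its core is the lemma that a graph in which every vertex has odd degree decomposes into exactly half as many paths as it has vertices, proved by an exchange argument on a suitably extremal system of paths, and vertices of positive even degree are then handled by a separate inductive step that is allowed to place them on cycles. At no point does that proof need, or produce, a cycles-only decomposition of Eulerian graphs into $\lfloor n/2\rfloor$ cycles, and no Euler-tour greedy appears. As written, your argument reduces a known theorem to an open conjecture, so it does not constitute a proof; either follow Lov\'asz's route or cite the theorem as the paper does.
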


	\Cref{thm:lovasz} will play an important role in our proof. We sometimes use instead its following easy corollary.

	\begin{corollary} \label{cor:lovasz}
		Every $n$-vertex graph can be decomposed into at most $e(G)/d + n$ paths of length at most $d$.
	\end{corollary}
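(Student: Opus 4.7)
The plan is to apply \Cref{thm:lovasz} to obtain an initial decomposition of $G$ into at most $n$ paths, and then chop each of these paths into subpaths of length at most $d$, bounding the total count by a simple counting argument.

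More precisely, by \Cref{thm:lovasz} there is a decomposition of $G$ into paths $P_1, \ldots, P_k$ with $k \leq n$, whose lengths $\ell_1, \ldots, \ell_k$ satisfy $\sum_{i=1}^k \ell_i = e(G)$. For each $i$, I would partition the edge set of $P_i$ into $\ceil{\ell_i / d}$ consecutive subpaths, each of length at most $d$. Taking the union of all these subpaths over all $i$ yields a decomposition of $G$ into paths of length at most $d$.

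It remains to bound the total number of paths produced. Using $\ceil{x} \leq x + 1$, the total count is
\[
\sum_{i=1}^k \ceil{\ell_i / d} \;\leq\; \sum_{i=1}^k \left( \ell_i/d + 1 \right) \;=\; e(G)/d + k \;\leq\; e(G)/d + n,
\]
as required.

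There is no genuine obstacle here; the corollary is a direct consequence of \Cref{thm:lovasz} combined with the trivial observation that a path of length $\ell$ can be cut into $\ceil{\ell/d}$ subpaths of length at most $d$. The only mild care needed is to ensure that when one aggregates the ceilings, the rounding error is absorbed into the additive $+n$ term, which it is since there are at most $n$ initial paths to round up.
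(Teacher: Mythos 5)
Your proof is correct and follows essentially the same route as the paper: apply \Cref{thm:lovasz} to get at most $n$ paths and cut each into subpaths of length at most $d$, with the paper bounding the pieces by noting there are at most $e(G)/d$ pieces of length exactly $d$ and at most one shorter piece per original path, which is the same count as your $\ceil{\ell_i/d} \le \ell_i/d + 1$ estimate.
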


	\begin{proof}
		By \Cref{thm:lovasz}, there is a collection $\P$ of at most $n$ paths that decomposes the edges of $G$. Decompose each path in $\P$ into paths of length $d$ and at most one shorter path. We thus obtain a decomposition $G$ into paths of length at most $d$, at most $n$ of which have length shorter than $d$. The corollary follows as there can be at most $e(G)/d$ pairwise edge-disjoint paths of length $d$.
	\end{proof}

	The currently best-known bounds towards Gallai's conjecture are due to, independently, Dean--Kouider \cite{dean2000gallai} and Yan \cite{yan1998path}, who proved that every $n$-vertex graph can be decomposed into at most $\frac{2n}{3}$ paths. The conjecture was proved to hold for various families of graphs; see Blanch\'e--Bonamy--Bonichon \cite{blanche2021gallai} for a proof for connected planar graphs and the citations therein for other families of graphs.

	\Cref{thm:lovasz} implies an upper bound of $\sep(n) = O(n \log n)$. Indeed, given an $n$-vertex graph $G$, let $\cH$ be a collection of $O(\log n)$ subgraphs of $G$ that separate the edges of $G$; this exists by an aforementioned result regarding separating a set without underlying structure.
	Now decompose each $H \in \cH$ into at most $n$ paths. This yields a collection of $O(n \log n)$ paths separating $G$. Until now, this was the best-known general upper bound on $\sep(n)$. In this paper we improve it significantly, showing that $\sep(n) = O(n \logstar n)$. Here $\logstar n$ is the iterated logarithm, namely the minimum number of times the base-$2$ logarithm has to be applied, iteratively, to yield a result which is smaller than $1$.
	
	\begin{theorem} \label{thm:main}
		Every $n$-vertex graph has a strongly-separating path system of size $O(n \logstar n)$.
		That is, $\sep(n) = O(n \logstar n)$.
	\end{theorem}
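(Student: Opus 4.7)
The target bound $O(n \logstar n)$ strongly suggests an iterative construction built in $O(\logstar n)$ rounds, each contributing $O(n)$ new paths to the system, summing to $O(n \logstar n)$. The overall shape of the plan is: reduce the problem, at the cost of $O(n)$ paths per round, to the same problem on an object whose ``complexity parameter'' has been driven from $L$ down to roughly $\log L$.

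The starting point is \cref{thm:lovasz}: decompose $G$ into at most $n$ edge-disjoint paths $P_1, \ldots, P_n$, and include every $P_i$ in the separating system. Since the $P_i$ are edge-disjoint, they strongly-separate every pair of edges lying on different $P_i$'s; the outstanding task is, for each $i$ separately, to separate the edges of $P_i$ among themselves. The initial complexity parameter is $L_0 = \max_i |P_i| \leq n$.

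The heart of the argument is a \emph{refinement lemma}. We maintain, together with the current path system, a partition of each $P_i$ into consecutive \emph{blocks} of length at most $L$, with the invariant that any two edges of the same $P_i$ lying in different blocks are already separated. Initially $L = L_0$ and each $P_i$ is a single block. The refinement lemma asserts that there is an absolute constant $C$ such that, by adding $O(n)$ new paths to the system, we can refine the block partition so that the invariant holds with block length $C \log L$. Applying the lemma $\logstar n + O(1)$ times drives the block length down to a constant $s = O(1)$, after which a final batch of $O(n)$ paths — obtained by $O(\log s) = O(1)$-bit-labelling positions within a block and applying \cref{thm:lovasz} to each bit-class subgraph — separates the edges remaining inside each small block and completes the construction.

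The main obstacle is the refinement lemma itself, specifically executing it within an $O(n)$ path budget independent of $L$. A naive realisation uses $\Theta(\log L)$ bits to label sub-blocks of length $\log L$ inside each block and applies \cref{thm:lovasz} to each of the $\Theta(\log L)$ bit-class subgraphs, producing $\Theta(n \log L)$ new paths per round — too expensive. To eliminate the $\log L$ factor one must amortise across bits and across the $P_i$'s, exploiting the ambient graph $G$: each bit-class subgraph is a union of length-$\Theta(\log L)$ runs aligned along the $P_i$'s, and invoking \cref{cor:lovasz} with length parameter $d = \Theta(\log L)$ should, after a careful routing of long paths through $G$ and/or a randomised choice of sub-block boundaries, produce a decomposition of each bit-class into $O(n / \log L)$ long paths rather than $n$ short ones. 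Arguing that such structured long-path decompositions exist — and telescoping the bounds across rounds so that the per-round cost stays at $O(n)$ uniformly in $L$ — is the central technical challenge.
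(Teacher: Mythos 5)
There is a genuine gap: the entire difficulty of the theorem has been pushed into your ``refinement lemma,'' which you do not prove, and the route you sketch for it does not work. The bit-class subgraphs are, for the low-order bits, unions of enormous numbers of vertex-disjoint short runs (the lowest bit class is essentially a matching with $\Theta(e(G))$ components spread over all the $P_i$'s), so no path decomposition \emph{inside} such a subgraph can beat $\Omega(n)$ paths per bit class, and summing over the $\Theta(\log L)$ bits blows the budget, exactly as you note. The only way to amortise is the one you gesture at: merge runs from different paths and different blocks into a single long path by routing connector segments through the ambient graph $G$. But such a connector path is only useful if it \emph{avoids} all the edges it is supposed to exclude --- for every run it picks up, all same-block edges with the complementary bit --- and in a general graph there is no reason such connectors exist at all (consider graphs that are disjoint unions of sparse pieces, or have cut vertices bottlenecking all routes through forbidden edges). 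Establishing that they do exist, after suitably preprocessing $G$, is precisely the content of the paper: \Cref{lem:decompose-expanders} decomposes $G$ into sublinear expanders, \Cref{lem:connecting} connects prescribed pairs disjointly through a random vertex set, and \Cref{lem:expand-limited-contact} lets one grow connecting paths while avoiding the forbidden edge/vertex sets $\bigcup_{e \in M} P(e)$; edges that cannot be handled are deferred to a remainder graph. Your proposal would need all of this machinery (or a substitute for it) to make a single round cost $O(n)$, so as it stands the key lemma is assumed rather than proved.

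A secondary structural point: the paper's $\logstar n$ arises from iterating a \emph{degree} reduction --- one round of \Cref{cor:two-steps} separates everything outside a remainder $G_1$ whose average degree drops from $d$ to at most $\log d$ --- not from shrinking a block-length parameter along a fixed path decomposition. Average degree is the right complexity measure here because the per-round path budget has to absorb terms like $e(G)/d$ (cf.\ \Cref{lem:bound-deg,lem:sparse-expanders}), and because the leftover edges of the expander decomposition naturally form a sparse remainder to recurse on. Your block-length parameter does not control the number of unseparated pairs in any comparable way, and the invariant that the added connector paths do not spoil separation between previously split blocks of the same $P_i$ would itself need an argument of the same avoid-these-edges type. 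So the proposal is not a correct alternative proof; it is an outline whose missing lemma is essentially equivalent to the problem the paper actually solves.
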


	A key component in this paper is the use of sublinear expanders. These were introduced by Koml\'os--Szemer\'edi \cite{komlos1994topological,komlos1996topological} and recently various variants of them were used to prove some interesting results \cite{fernandez2022disjoint,fernandez2022nested,fernandez2022build,letzter2023immersion,haslegrave2021crux,haslegrave2021ramsey,haslegrave2021extremal,kim2017proof,liu2017proof,liu2020solution,liu2022clique,bucic2022erdos}. Specifically, we will use many ideas from a very recent paper by Buci\'c--Montgomery \cite{bucic2022erdos}, who proved that every $n$-vertex graph can be decomposed into $O(n \logstar n)$ cycles and edges. This is significant progress on the Erd\H{o}s--Gallai conjecture \cite{erdos1966representation}, asserting that every $n$-vertex graph can be decomposed into $O(n)$ cycles and edges. We will also use a tool from \cite{liu2020solution}, where Liu and Montgomery solved the so-called odd cycle problem, due to Erd\H{o}s--Hajnal \cite{erdos1966chromatic}.

	In the following section, \Cref{sec:overview}, we give an overview of the proof and an outline of the structure of the paper.
	We remark that, as our focus throughout the paper is on strong separation, from now on we will refer to it simply as \emph{separation}.

\section{Proof overview} \label{sec:overview}

	The main purpose of this section is to give an overview of the proof of \Cref{thm:main}.
	We will first describe a general strategy for finding small separating path systems (see \Cref{subsec:strategy}), we then show how sublinear expanders come into play (\Cref{subsec:expanders}), and we give a more detailed proof overview in \Cref{subsec:sketch}. At the end of the section we outline the structure of the paper (see \Cref{subsec:structure}) and define some notation (in \Cref{subsec:notation}).
		
	\subsection{A general strategy} \label{subsec:strategy}
		We start by describing a general strategy for finding a separating path system for a graph $G$ on $n$ vertices, which was used in \cite{falgas2013separating}. By \Cref{thm:lovasz}, there is a collection $\cP$ of at most $n$ paths that decomposes $G$; for an edge $e$, denote by $P(e)$ the unique path in $\cP$ that contains $e$. We claim that there is a collection $\cM$ of at most $3 n$ matchings that decomposes $G$, such that $\cP \cup \cM$ separates the edges of $G$. 

		Indeed, take $\cM$ to be a collection of $3n$ pairwise edge-disjoint matchings, whose edges intersect each path in $\cP$ in at most one edge, and that covers a maximum number of edges under these conditions. Suppose that $e$ is an edge in $G$ which is not covered by any of the matchings in $\cM$. Notice that there are at most $n-1$ matchings in $\cM$ that contain edges of $P(e)$, and at most $2(n-1)$ matchings in $\cM$ that contain an edge that intersects $e$. Hence, there is a matching $M \in \cM$ that contains neither edges of $P(e)$ nor edges that intersect $e$. Replacing $M$ by $M \cup \{e\}$, we reach a contradiction to the maximality of $\cM$, thus showing that $\cM$ covers all edges in $G$.
		Observe that $\cP \cup \cM$ separates the edges of $G$. Indeed, defining $M(e)$ to be the unique matching in $\cM$ that contains the edge $e$, we see that for any two edges $e$ and $f$, one of $P(e)$ and $M(e)$ contains $e$ but not $f$.
		
		Thus, if we could find a path $P_M$, for each $M \in \cM$, that contains $M$ but avoids all other edges of $\bigcup_{e \in M} P(e)$, then we would obtain a separating path system for $G$ whose size is $|\cP| + |\cM| \le 4n$.

		Of course, this strategy does not always work; otherwise we would have been able to improve on our main result, \Cref{thm:main}. Nevertheless, if $G$ has expansion properties and we impose additional restrictions on the matchings in $\cM$, then this strategy often succeeds.
		Indeed, in \cite{falgas2013separating} it was used to show that, with high probability, the random graph $G(n, p)$ has a weakly-separating path system of size $O(n)$ for every $p = p(n)$ (similar ideas appear in \cite{balogh2016path}, where the authors found much smaller strongly-separating path systems when $p \ge \frac{1000}{\log n}$). In this paper we apply it to a more general class of expanders. 

	\subsection{Expanders} \label{subsec:expanders}

		The exact notions of expansion that we shall use will be introduced later (see \Cref{sec:decompose-expanders,sec:random}). We mention that they are variants of the `sublinear expanders', which were introduced by Koml\'os and Szemr\'edi \cite{komlos1994topological,komlos1996topological}, and have since been modified in various ways to prove many interesting results \cite{fernandez2022disjoint,fernandez2022nested,fernandez2022build,letzter2023immersion,haslegrave2021crux,haslegrave2021ramsey,haslegrave2021extremal,kim2017proof,liu2017proof,liu2020solution,liu2022clique,bucic2022erdos}. As their name suggests, such expanders have a fairly weak expansion property. Their strength is in their omnipresence: every graph with a sufficiently large (but possibly still constant) average degree contains a sublinear expander. In fact, Buci\'c and Montgomery \cite{bucic2022erdos} recently showed that every graph can be decomposed (or almost decomposed, if we want the expanders to be `robust') into sublinear expanders that, on average, cover each vertex a constant number of times. This is very useful for us; it shows that, essentially, it suffices to be able to find separating path systems of linear size in expanders.

		With this decomposition-into-expanders result at hand, a rough plan would be to apply it to decompose a given graph into expanders, and then find a linear separating path system for each expander using the strategy above. If this plan could be realised as stated, then we would end up with a linear separating path system for the original graph. However, depending on the density of the expanders we find, we sometimes require them to be `robust' (or `somewhat robust'), and then some edges might remain uncovered. Moreover, in some situations, our methods separate almost all, but not all, edges of a given expander.
		Instead, we perform $O(\logstar n)$ steps, each time significantly decreasing the number of unseparated edges, until remaining with $O(n)$ edges that we can separate trivially by individual edges.

	\subsection{Proof sketch} \label{subsec:sketch}
		
		We now give a more concrete outline of our proof.
		Our main task is to prove that, given a graph $G$ on $n$ vertices and with average degree $d$, there is a subgraph $G_1$ on $O(n \polylog d)$ edges and a collection $\cP$ of $O(n)$ paths in $G$ that separates the edges of $G \setminus G_1$. 

		It now essentially suffices to separate the edges of $G_1$. Indeed, suppose that $\cQ$ is a collection of paths in $G$ that separates $G_1$.
		By \Cref{thm:lovasz}, there are collections $\cP'$ and $\cQ'$ of at most $n$ paths that decompose $G \setminus G_1$ and $G_1$. 
		We claim that $\cP \cup \cP' \cup \cQ \cup \cQ'$ separates the edges of $G$. To see this, let $e$ and $f$ be distinct edges in $G$. If both are in $G \setminus G_1$, then $\cP$ has a path containing $e$ but not $f$; if both are in $G_1$ then $\cQ$ has such a path; if $e$ is in $G \setminus G_1$ and $f$ is not, then $\cP'$ has a path through $e$ but not $f$; and finally if $e$ is in $G_1$ but $f$ is not then $\cQ'$ has such a path.

		Iterating the argument in the paragraph before last thus yields a separating path system for $G$ of size $O(n \logstar n)$.
		We complete this task in three steps.

		\paragraph{Step 1: dense expanders.}
			We show (in \Cref{lem:reduce-large-deg}) that given a graph $J$ on $m$ vertices, it has a subgraph $J_1$ of size $O(m \polylog m)$ such that $J \setminus J_1$ can be separated using $O(m)$ paths.
			To do so, we decompose $J$ into `robust' expanders (with total order $O(m)$) and a remainder of size $O(m \polylog m)$ and show that such expanders can be separated using a linear number of paths (in \Cref{cor:sep-dense-expander}). 

			To separate a robust expander $K$, we show that if $V$ is a set of vertices in $K$ that includes each vertex with probability $1/3$, then, with high probability, the edges of $K \setminus V$ can be separated by a linear number of paths. To find these paths, we apply the strategy given in \Cref{subsec:strategy} to $K \setminus V$, to obtain small collections $\cP$ of paths and $\cM$ of matchings that together separate the edges of $K$, while imposing a certain degree condition on the matchings in $\cM$. We then partially connect the edges of each $M \in \cM$ using a subset $V_1$ of $V$, chosen uniformly at random, thereby forming a path forest whose ends `expand well' into $V \setminus V_1$. Now, we adapt a result from \cite{bucic2022erdos} (which in turn uses ideas from a paper of Tomon \cite{tomon2022robust}) that allows for connecting pairs of vertices, disjointly, via a random set of vertices. 

		\paragraph{Step 2: separating edges touching large degrees in sparse expanders.}
			We again start by decomposing a graph $G$ on $n$ vertices and with average degree $d$ into expanders (here the expanders are not required to be robust, and so we have no uncovered edges), whose sum of orders is $O(n)$. For each such expander $H$, we show (in \Cref{lem:bound-deg}) that its edges touching high degree vertices can be separated using $O(n + \frac{e(H)}{d})$ paths. 

			Denoting by $L$ the set of large degree vertices in $H$, let us explain how to separate the edges in $H[L]$. Writing $k := n + \frac{e(H)}{d}$, we apply the strategy in \Cref{subsec:strategy}, while additionally requiring that the paths in $\cP$ and matchings in $\cM$ have size at most $d$; it is straightforward to show that there exist appropriate $\cP$ and $\cM$ of size $O(k)$. Now, given a matching $M \in \cM$, let $F$ be a path forest that extends $M$, avoids vertices in $X := \bigcup_{e \in M} V(P(e)) \setminus V(M)$, and under these conditions has as few components as possible, and as few edges as possible (prioritising the former). Suppose, towards a contradiction, that there are at least two components, and let $u$ and $v$ be leaves of distinct components of $F$. The minimality assumption allows us to show that, roughly speaking, balls around $u$ in $H \setminus X$ have few neighbours in $F$. Adapting a lemma from Liu--Montgomery \cite{liu2020solution}, it follows that more than $|H|/2$ vertices in $H$ can be reached from $u$ in $H \setminus (X \cup V(F) \setminus \{u\})$, exploiting the fact that $u$ has a large degree. Since the same holds for $v$, it follows that $F$ can be extended by a path from $u$ to $v$, a contradiction. Thus $F = F_M$ is a path that extends $M$ and avoids other edges from $\bigcup_{e \in M}P(e)$, and $\{F_M : M \in \cM\} \cup \cP$ is a collection of $O(k)$ paths separating $G[L]$.

			In fact, similar reasoning can be applied to separate edges $xy$ where $x \in L$ and $y$ has at least four neighbours in $L$, by allowing the members of $\cM$ to contain paths of length $2$ whose ends are in $L$. 

		\paragraph{Step 3: separating almost all remaining edges in $H$.}

			For each $H$ as in the previous step, let $F$ be its subgraph of `unseparated edges'; so $F$ has small maximum degree. We now decompose $F$ into `somewhat robust' expanders (with total order $O(|H|)$) and a remainder of $O(\polylog d \cdot |H|)$ edges. Consider one of these expanders $J$. If $|J|$ is fairly large with respect to $d$ (namely, if $|J| \ge 2^{(\log d)^{7})}$) then we show (in \Cref{lem:sparse-expanders}) that its edges can be separated using $O(|J|)$ edges. 

			The proof is similar to the one outlined in Step 2. Here, in addition to requiring the paths and matchings in $\cP$ and $\cM$ to have size at most $d$, we also require the edges in $\cM$ to be far from each other (the assumption that $|J|$ is large with respect to $d$ is used to prove that appropriate $\cM$ exists). Defining $M$, $X$ and $F$ as above, it suffices to show that $F$ has just one component. To this end, taking $u$ to be a leaf in $F$, we first show that the ball of radius $(\log d)^{5}$ around $u$ in $J' := J \setminus (X \cup V(F) \setminus \{u\})$ is large, exploiting the expansion property being `somewhat robust' and the fact that the other edges of $M$ are far from $u$. We then proceed like in the previous step to expand $u$ further, and reach a contradiction if $F$ has at least two components. As before, this implies the existence of a separating path system for $J$ of size $O(|J| + \frac{e(J)}{d})$.

			If instead $|J| \le 2^{(\log d)^{7}}$, we apply the first step to show that all but $O(|J| \polylog |J|)$ edges of $J$ can be separated using $O(|J|)$ paths.
			
			Altogether, we get a collection of $O(n)$ paths that separate all but $O(n \polylog d)$ edges of the graph $G$ that we started with at the beginning of the second step, concluding the iterative step.

	\subsection{Organisation of the paper} \label{subsec:structure}

		In the next section (\Cref{sec:lemmas}) we state two key lemmas --- \Cref{lem:reduce-large-deg}, which covers Step 1 above, and \Cref{lem:reduce-small-deg}, which covers Steps 2 and 3 --- and show how our main theorem (\Cref{thm:main}) follows from these lemmas.
		In \Cref{sec:decompose-expanders} we state and prove the decomposition-into-expanders lemma, \Cref{lem:decompose-expanders}, which will be used in the proofs of both key lemmas. \Cref{sec:random} contains the proof of \Cref{lem:connecting}, which allows for connecting pairs of vertices in an expander disjointly through a random set of vertices, and is a variant of a lemma from \cite{bucic2022erdos}. The first key lemma (\Cref{lem:reduce-large-deg}) is then proved in \Cref{sec:dense-expanders}. In \Cref{sec:limited-contact} we prove a variant of a lemma from \cite{liu2020solution} regarding expansion with forbidden sets. Finally, in \Cref{sec:sparse-expanders} we prove the second key lemma (\Cref{lem:reduce-small-deg}).

	\subsection{Notation} \label{subsec:notation}

		Our notation is mostly standard. Given a graph $G$, a set of vertices $X$, and integer $i \ge 0$, we write $N_G(X)$ to be the set of vertices \emph{outside} of $X$ that send at least one edge into $X$, and write $B^i_G(X)$ to be the set of vertices at distance at most $i$ from $X$ in $G$. Given a vertex $x$ and a set of vertices $X$ in a graph $G$, we denote the number of neighbours of $x$ in $X$ by $d_G(x, X)$. As is customary, we omit the subscript $G$ when it is clear from the context.
		Given a graph $G$ and two disjoint sets of vertices $A, B$ in it, we write $G[A,B]$ for the bipartite subgraph of $G$, with bipartition $(A,B)$, whose edges are the edges of $G$ with one end in $A$ and the other in $B$.
		Given vertices $x, y$, an \emph{$(x,y)$-path} is a path from $x$ to $y$. 
		We denote the length of a path $P$ by $\ell(P)$. 
		We say that a sequence of events $(E_n)_{n \ge 1}$ holds \emph{with high probability} if it holds with probability tending to $1$ as $n$ tends to infinity.
		All logarithms will be in base $2$, unless specified otherwise, and $\logstar n$ is \emph{iterated logarithm}, namely the least number of times the base-$2$ logarithms has to be applied, iteratively and starting with $n$, to yield a number which is smaller than $1$.
		When dealing with large numbers we often omit floor and ceiling signs when they are not crucial.

\section{Main lemmas} \label{sec:lemmas}

	In this section we state two key lemmas and then show how to use them to prove our main result (\Cref{thm:main}).

	The first key lemma shows that given any graph on $n$ vertices there is a linear collection of paths separating all but $O(n \polylog n)$ of its edges.

	\begin{restatable}{lemma}{lemReduceLargeDeg} \label{lem:reduce-large-deg}
		Suppose that $G$ is a graph on $n$ vertices. Then there is a subgraph $G_1 \subseteq G$, with $e(G_1) \le n (\log n)^{55}$, and a collection $\cP$ of at most $1300 n$ paths in $G$ that separates the edges of $G \setminus G_1$.
	\end{restatable}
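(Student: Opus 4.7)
The plan is to apply the decomposition-into-expanders result (\Cref{lem:decompose-expanders}) to $G$, splitting its edges into a leftover $G_1$ with $e(G_1) = O(n (\log n)^{55})$ and a collection of \emph{robust} sublinear expanders $K_1, \dots, K_t$ whose orders $|K_i|$ sum to $O(n)$. The task then reduces to proving that every robust sublinear expander $K$ admits a separating path system of size $O(|K|)$, since unioning such systems across the $K_i$ yields a separating family for $G \setminus G_1$ of total size $O(n)$, which with careful bookkeeping of the absolute constants fits inside the $1300n$ bound.

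For a single robust expander $K$, the first step is to set aside a random vertex subset $V \subseteq V(K)$ by including each vertex independently with probability $1/3$; with positive probability $K \setminus V$ retains (slightly weakened) sublinear expansion, while $V$ inherits enough expansion to act as a reservoir for building connecting paths. Inside $K \setminus V$ we execute the general strategy of \Cref{subsec:strategy}: by \Cref{thm:lovasz} decompose $K \setminus V$ into a path collection $\cP_0$ of size at most $|K|$, and greedily extract $O(|K|)$ pairwise edge-disjoint matchings $\cM$ that cover $K \setminus V$ and each meet every path of $\cP_0$ in at most one edge, giving a separating system $\cP_0 \cup \cM$ for $E(K \setminus V)$. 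It remains to upgrade each matching $M \in \cM$ to a path $P_M \subseteq K$ with the same edge content as $M$, internal vertices drawn from $V$, and avoiding the forbidden set $X_M := \bigcup_{e \in M} V(P(e)) \setminus V(M)$, where $P(e)$ is the path of $\cP_0$ containing $e$; then $\cP_0 \cup \{P_M : M \in \cM\}$ separates the edges of all of $K$, and the edges of $K$ lying inside $V$ are handled by including them as matching edges of $\cM$ that get absorbed into the $P_M$'s.

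The construction of the $P_M$'s is the main obstacle, and is where the robust expansion of $K$ is genuinely used. I would carry it out in two stages. First, select a further random sub-sample $V_1 \subseteq V$ and use short paths through $V_1$ to stitch each $M$ into a path forest $F_M$ with few components, whose endpoints avoid $X_M$ and expand well into $V \setminus V_1$; a union bound over the matchings in $\cM$, tracking that each $X_M$ is not too large and that the matchings are edge-disjoint, shows that this succeeds simultaneously for all $M$ with positive probability. Second, apply the disjoint connecting lemma \Cref{lem:connecting} (a variant of the Bucić--Montgomery random-connecting tool) inside $V \setminus V_1$ to link the remaining endpoint pairs of the $F_M$'s into single paths $P_M$ via vertex-disjoint short paths, each avoiding the relevant $X_M$. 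Because the total number of endpoint pairs across all matchings is $O(|K|)$ and each $X_M$ accounts for only a controlled share of $V$, the hypotheses of \Cref{lem:connecting} are comfortably met, yielding the required $P_M$'s. Collecting the resulting systems across the expanders $K_i$ produces the desired collection $\cP$ of at most $1300n$ paths separating $G \setminus G_1$, with $G_1$ being the leftover from \Cref{lem:decompose-expanders}.
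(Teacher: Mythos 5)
Your overall architecture matches the paper's: decompose $G$ via \Cref{lem:decompose-expanders} with $s$ polylogarithmic into robust expanders plus a leftover $G_1$, then separate each expander by running the strategy of \Cref{subsec:strategy} in the complement of a random vertex reservoir $V$ and completing each matching into a path through $V$ with \Cref{lem:connecting}. However, there is a genuine gap at the point where you dispose of the edges meeting $V$. The whole mechanism only separates edges of $K \setminus V$: a completed path $P_M$ is guaranteed not to contain edges of $K \setminus V$ other than those of $M$ precisely because every connector edge has an endpoint in $V$, but the connectors are otherwise uncontrolled, so $P_M$ may run through arbitrary edges inside or touching $V$. Your suggestion to ``include the edges of $K$ lying inside $V$ as matching edges of $\cM$ that get absorbed into the $P_M$'s'' therefore does not work: to separate two edges $f, g$ incident to $V$ (or an edge $f$ incident to $V$ from an edge of $\bigcup_{e \in M(f)} P(e)$), you would need $P_{M(f)}$ to avoid $g$, and nothing in \Cref{lem:connecting} lets you forbid specified \emph{edges} inside the random set from appearing on the connecting paths; demanding that would essentially require the connecting paths to be pre-separated among themselves, which is circular. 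The paper closes exactly this hole in \Cref{cor:sep-dense-expander}: it takes a uniformly random tripartition $\{V_1,V_2,V_3\}$ of $V(K)$ and applies the one-reservoir lemma (\Cref{lem:sep-random-set}) three times, once to each $K - V_i$; since an edge has only two endpoints it lies in some $K - V_i$ and is separated there, while pairs split across different $K - V_i$'s are handled by adding a Lov\'asz path decomposition of each $K - V_i$. Your proposal needs this (or an equivalent) extra layer.

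A secondary, smaller issue: the hypothesis of \Cref{lem:connecting} is that \emph{every} subset $X$ of the endpoints to be joined satisfies $|N(X)| \ge |X|(\log n)^{50}$. In the paper this is engineered by imposing a degree-balance condition on the matchings (at most $d/50$ edges $e$ of $M$ with $\bd(e)\le d$) and by stitching through $V_1$ in increasing order of degree, so that each surviving leaf retains many private neighbours in $V_1$. Your sketch (``a union bound over the matchings, tracking that each $X_M$ is not too large'') does not by itself deliver this subset-expansion property, and without it the Aharoni--Haxell step inside \Cref{lem:connecting} is not available.
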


	The second key lemma has a more complicated statement; essentially, it shows that for every graph $G$ there is a linear collection of paths that separates a subgraph of $G$ whose complement in $G$ can be decomposed into small subgraphs and a sparse remainder.

	\begin{restatable}{lemma}{lemReduceSmallDeg} \label{lem:reduce-small-deg}
		There exists $d_0$ such that the following holds.
		Suppose that $G$ is a graph on $n$ vertices with average degree $d$, where $d \ge d_0$.
		Then there is a subgraph $G_1 \subseteq G$, a collection $\HH$ of subgraphs of $G$ and a collection $\P$ of paths in $G$ with the following properties.
		\begin{enumerate}[label = \rm(\arabic*)]
			\item \label{itm:key-two-HH}
				The graphs in $\HH$ are mutually edge-disjoint; $|H| \le 2^{(\log d)^7}$ for every $H \in \HH$; and $\sum_{H \in \HH} |H| \le 4n$.
			\item \label{itm:key-two-P}
				The collection $\P$ separates the edges of $G - G_1 - \bigcup_{H \in \HH} H$, and satisfies $|\P| \le 80n$.
			\item \label{itm:key-two-G1}
				The graph $G_1$ has average degree at most $(\log d)^3$.
		\end{enumerate}
	\end{restatable}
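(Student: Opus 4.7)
The plan is to follow the two-phase strategy from Steps 2 and 3 of \Cref{subsec:sketch}, applying \Cref{lem:decompose-expanders} twice: first to $G$ itself, and then to the subgraph of edges that remain unseparated after the first application. In the first phase I would decompose $G$ into a family $\cE_1$ of pairwise edge-disjoint expanders (not required to be robust, so that every edge of $G$ is covered) with $\sum_{H \in \cE_1} |H| = O(n)$. For each $H \in \cE_1$, I would invoke the Step 2 lemma to produce a collection of $O(|H| + e(H)/d)$ paths in $G$ separating all edges of $H$ incident to its high-degree set $L_H$ (and also edges $xy \in H$ where $y$ has at least four neighbours in $L_H$). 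Denoting by $F_H$ the subgraph of unseparated edges of $H$, we have that $F_H$ has maximum degree bounded in terms of $d$.

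In the second phase, for each $H \in \cE_1$ I would apply \Cref{lem:decompose-expanders} to $F_H$, this time asking for somewhat-robust expanders $\cE_2(H)$ with $\sum_{J \in \cE_2(H)} |J| = O(|H|)$, leaving a sparse remainder $R_H$ of at most $O(|H|\, \polylog d)$ edges. I would then split $\cE_2(H)$ by size: for each $J \in \cE_2(H)$ with $|J| \ge 2^{(\log d)^7}$, apply the Step 3 lemma to obtain $O(|J|)$ paths fully separating the edges of $J$; the remaining smaller $J$'s form $\HH$. Set $G_1 := \bigcup_{H \in \cE_1} R_H$ and let $\P$ be the union of all the path collections produced above, augmented with a Lov\'asz-type decomposition of $G - G_1 - \bigcup_{K \in \HH} K$ into at most $n$ paths, so that pairs of edges not lying in a common first-phase expander are also separated, as in \Cref{subsec:strategy}.

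Verifying the bounds: members of $\HH$ are pairwise edge-disjoint because each lies in some $F_H$, and the $F_H$'s are subgraphs of pairwise edge-disjoint $H \in \cE_1$; each $K \in \HH$ satisfies $|K| \le 2^{(\log d)^7}$ by construction; and $\sum_{K \in \HH} |K| \le \sum_{H \in \cE_1} O(|H|) = O(n)$, which with careful constants fits within $4n$. For $|\P|$, the Step 2 contribution telescopes to $O(n + e(G)/d) = O(n)$ using $e(G) \le dn/2$, and the Step 3 contribution similarly telescopes to $O(n)$; the implicit constants combine to at most $80n$ provided $d \ge d_0$ with $d_0$ large. Finally, $G_1$ has $O(n\, \polylog d)$ edges in total, and the polylog factor inherited from \Cref{lem:decompose-expanders} should be bounded by $(\log d)^3/2$ for $d_0$ large, giving average degree at most $(\log d)^3$.

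The main obstacle I anticipate is tracking constants through the two nested applications of \Cref{lem:decompose-expanders} and the Step 2 / Step 3 lemmas so as to land exactly on the prescribed $80n$, $4n$, and $(\log d)^3$ bounds; in particular, the specific exponent $3$ is dictated by the polylog factor from the decomposition lemma, and the threshold $2^{(\log d)^7}$ must match the size requirement in the Step 3 lemma. A secondary conceptual point is that the path collection $\P$ must separate all pairs of edges in $G - G_1 - \bigcup_{K \in \HH} K$, not merely pairs within a common expander; the Lov\'asz-type component is included for exactly this reason.
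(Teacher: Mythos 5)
Your overall architecture is the same as the paper's: decompose $G$ into $\eps$-expanders via \Cref{lem:decompose-expanders}, handle high-degree edges in each expander with \Cref{lem:bound-deg}, re-decompose the leftover low-degree subgraphs $F_H$ into somewhat-robust expanders, treat the large ones with \Cref{lem:sparse-expanders}, let the small ones be $\HH$ and the uncovered edges be $G_1$. However, there is a genuine gap in how you verify property \ref{itm:key-two-P}. Your path system must separate \emph{every} ordered pair of edges of $G' := G - G_1 - \bigcup_{K \in \HH} K$, and your proposed fix for cross-unit pairs --- a single Lov\'asz-type decomposition of $G'$ into at most $n$ paths --- does not do this: a decomposition of $G'$ only separates a pair $(e,f)$ when $f$ lies \emph{outside} $G'$, since the decomposition path through $e$ may perfectly well run through $f$ when both are edges of $G'$. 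The concretely unseparated case is $e \in H \setminus F_H$ and $f$ an edge of a large second-phase expander $J \subseteq F_H$ inside the \emph{same} $H$: the paths produced by \Cref{lem:bound-deg} are only guaranteed to avoid the few edges $E_M \setminus E(M)$ and are otherwise free to traverse $F_H$, so they may contain $f$; the collection for $J$ contains no path through $e$; and the global decomposition fails as just explained.

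The paper's remedy, which you should adopt, is to add for each $H$ a decomposition $\P'_H$ of $H \setminus F_H$ into at most $|H|$ paths (via \Cref{thm:lovasz}); these paths use only edges of $H \setminus F_H$, so the one through $e$ automatically avoids any $f \notin H \setminus F_H$, and the total cost $\sum_H |H| \le 2n$ fits the $80n$ budget. Note also that the cross-first-phase-expander pairs you were worried about need no global decomposition at all: since the expanders are pairwise edge-disjoint and all paths produced for a given unit ($\P_H$ inside $H$, $\Q_J$ inside $J$) stay within that unit, a path through $e$ in its own unit automatically misses any $f$ lying in a different unit. So the global decomposition is both insufficient for the problematic case and unnecessary for the case you introduced it for.
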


	The following corollary combines the two key lemmas to show that given a graph with average degree $d$, there is a linear collection of paths that separates all edges but a remainder whose average degree is $O(\polylog d)$.

	\begin{corollary} \label{cor:one-step}
		Suppose that $G$ is a graph on $n$ vertices with average degree $d$. Then there is a subgraph $G_1 \subseteq G$ with average degree at most $O((\log d)^{420})$ and a collection $\P$ of $O(n)$ paths in $G$ that separates $G \setminus G_1$.
	\end{corollary}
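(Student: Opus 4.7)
The plan is to apply the two key lemmas in sequence and then glue their path systems together with the help of \Cref{thm:lovasz}, following exactly the ``combining'' template described in \Cref{subsec:sketch}. First, I would dispose of the case $d < d_0$ trivially by taking $G_1 = G$ and $\cP = \emptyset$: the average degree of $G_1$ is then the constant $d_0$, which is comfortably $O((\log d)^{420})$ once the hidden constant is chosen large enough.

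So assume $d \ge d_0$. Apply \Cref{lem:reduce-small-deg} to obtain a subgraph $G_0$ of average degree at most $(\log d)^3$, an edge-disjoint family $\HH$ of subgraphs with $|H| \le 2^{(\log d)^7}$ and $\sum_{H \in \HH} |H| \le 4n$, and a collection $\cP_0$ of at most $80n$ paths separating $G - G_0 - \bigcup_{H \in \HH} H$. Then, for each $H \in \HH$, apply \Cref{lem:reduce-large-deg} to $H$ to obtain a subgraph $H_1 \subseteq H$ with $e(H_1) \le |H|(\log |H|)^{55}$ and a collection $\cP_H$ of at most $1300|H|$ paths separating $H \setminus H_1$. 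Set $G_1 := G_0 \cup \bigcup_{H \in \HH} H_1$. Using $\log |H| \le (\log d)^7$ and $\sum_H |H| \le 4n$, the total number of edges in $G_1$ is bounded by $O\bigl(n(\log d)^{385}\bigr)$, so its average degree is within the claimed $O((\log d)^{420})$ with room to spare.

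To produce a single separating path system for $G \setminus G_1$, I would augment $\cP_0 \cup \bigcup_{H \in \HH} \cP_H$ with a path decomposition (via \Cref{thm:lovasz}) of each of the pairwise edge-disjoint ``pieces'' $G - G_0 - \bigcup_{H \in \HH} H$ and $\{H \setminus H_1 : H \in \HH\}$. Since the total number of vertices across these pieces is $O(n)$, this adds only $O(n)$ more paths, and the final collection still has size $O(n)$. To see that it separates $G \setminus G_1$: given distinct edges $e, f \in G \setminus G_1$, each lies in exactly one of these pieces. If they lie in the same piece, the corresponding $\cP_0$ or $\cP_H$ already provides a path through $e$ avoiding $f$; if they lie in different pieces, the path from the decomposition of the piece containing $e$ is entirely contained in that piece and thus automatically avoids $f$.

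There is no genuine obstacle here, since the substantive work is already packaged inside the two key lemmas; the task is only to verify that the parameter accounting lines up and that the two-level gluing argument does separate $G \setminus G_1$. The only mild care is in confirming that the polynomial-in-$\log d$ blow-up introduced by \Cref{lem:reduce-large-deg} (whose input graphs have order at most $2^{(\log d)^7}$) remains polylogarithmic in $d$, which it does.
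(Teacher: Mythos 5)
Your argument is essentially the paper's own proof: apply \Cref{lem:reduce-small-deg} once, then \Cref{lem:reduce-large-deg} to each $H \in \HH$, take $G_1$ to be the union of the two leftover pieces, and glue the separating systems with \Cref{thm:lovasz}-decompositions of the edge-disjoint pieces; the counting and the same-piece/different-piece case analysis are correct (decomposing $H \setminus H_1$ rather than all of $H$, as the paper does, makes no difference).

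The one slip is the degenerate case $d < d_0$: taking $G_1 = G$ does not give average degree $O((\log d)^{420})$, because for $d$ close to $1$ (indeed whenever $\log d \le 1$) the quantity $(\log d)^{420}$ is at most $1$ and can be arbitrarily small, so no absolute hidden constant makes $d \le C(\log d)^{420}$ hold. The paper avoids this by taking $G_1 = \emptyset$ and $\cP = E(G)$ viewed as single-edge paths, which is fine since $e(G) = dn/2 < d_0 n/2 = O(n)$; with that substitution your proof is complete.
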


	\begin{proof}
		We may assume that $d$ is large, because otherwise we could take $G_1$ to be the empty graph and $\P = E(G)$.
		Apply \Cref{lem:reduce-small-deg} to obtain $\HH$ and $\P$ that satisfy the conditions of the lemma.
		Namely, the graphs in $\HH$ are mutually edge-disjoint and consist of at most $2^{(\log d)^{7}}$ vertices, $\sum_{H \in \HH}|H| \le 4n$, $|\P| \le 80n$, and the paths in $\P$ separate the edges in $G'$, where $G' := G - G'' - \bigcup_{H \in \HH}H$ and $G''$ is a subgraph of $G'$ with average degree $O((\log d)^{3})$. Let $\P'$ be a collection of at most $n$ paths in $G'$ that decomposes the edges of $G'$; such a collection exists by \Cref{thm:lovasz}.

		Apply \Cref{lem:reduce-large-deg} to each graph $H$ in $\HH$; denote the resulting subgraph and collection of paths by $G_H$ and $\P_H$. So $e(G_H) \le |H|(\log |H|)^{55}$, $|\P_H| = O(|H|)$, and $\P_H$ is a collection of paths in $H$ that separates the edges of $H - G_H$. Write $G_1 := \bigcup_{H \in \HH}G_H \cup G''$. Then 
		\begin{align*}
			\sum_{H \in \HH} e(G_H)
			\le \sum_{H \in \HH} |H| (\log |H|)^{55}
			\le \left(\sum_{H \in \HH} |H| \right) \cdot (\log d)^{420}
			= O(n (\log d)^{420}).
		\end{align*}
		It follows that $e(G_1) = O(n (\log d)^{420})$, or, equivalently, $G_1$ has average degree $O((\log d)^{420})$. 
		Let $\P'_H$ be a collection of at most $|H|$ paths in $H$ that decomposes $E(H)$, and write $\Q := \P \cup \P' \cup \bigcup_{H \in \HH}(\P_H \cup \P'_H)$. Then $|Q| = O(n)$, using $\sum_{H \in \HH} |H| = O(n)$.

		We claim that $\Q$ separates the edges of $G \setminus G_1$. To see this, consider two edges $e$ and $f$ in $G \setminus G_1$. We consider four cases.
		\begin{itemize}
			\item
				There is $H \in \HH$ such that $e, f \in E(H)$.
				Then, there is $P \in \cP_H$ such that $e \in E(P)$ and $f \notin E(P)$.
			\item
				There is $H \in \HH$ such that $e \in E(H)$ and $f \notin E(H)$. Then, there is a path $P \in \cP_H'$ in $H$ that contains $e$, and so $f \notin E(P)$.
			\item
				Both $e$ and $f$ are in $G'$.
				Then, there is $P \in \cP$ such that $e \in E(P)$ and $f \notin E(P)$.
			\item
				The edge $e$ is in $G'$ and $f$ is not in $G'$. Then, there is $P \in \cP'$ such that $e \in E(P)$ and $f \notin E(P)$.
		\end{itemize}
		The subgraph $G_1$ and collection $\Q$ of paths satisfy the requirements of the corollary.
	\end{proof}

	The next corollary, applies the previous one twice, to prove a very similar statement, with the average degree of the remainder now being at most $\log d$. We could have omitted this corollary and proved \Cref{thm:main} directly from \Cref{cor:one-step}, but the counting is slightly simpler with \Cref{cor:two-steps}.

	\begin{corollary} \label{cor:two-steps}
		Let $G$ be a graph on $n$ vertices with average degree $d$, where $d$ is large. Then there is a subgraph $G_1 \subseteq G$ with average degree at most $\log d$ and a collection $\P$ of $O(n)$ paths in $G$ that separates $G \setminus G_1$.
	\end{corollary}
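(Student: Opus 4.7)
The plan is to iterate \Cref{cor:one-step} twice, each time reducing the average degree of the ``leftover'' subgraph. First I would apply \Cref{cor:one-step} to $G$ to obtain a subgraph $H \subseteq G$ with average degree at most $d' := C(\log d)^{420}$ (for some absolute constant $C$) together with a collection $\P_1$ of $O(n)$ paths in $G$ separating $G \setminus H$. I would then apply \Cref{cor:one-step} again, this time to $H$ (which has at most $n$ vertices and average degree $d'$), to obtain a subgraph $G_1 \subseteq H$ of average degree at most $C(\log d')^{420}$ and a collection $\P_2$ of $O(n)$ paths in $H$ separating $H \setminus G_1$.

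Since $d$ is taken to be large, one checks that
\begin{align*}
	C(\log d')^{420} = C\bigl(420 \log \log d + \log C\bigr)^{420} \le \log d,
\end{align*}
as the left-hand side is polynomial in $\log \log d$ while the right-hand side is exponentially larger. Hence $G_1$ has the required average degree.

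To finish, I need to separate edges crossing between $G \setminus H$ and $H \setminus G_1$. By \Cref{thm:lovasz}, there is a collection $\P_3$ of at most $n$ paths decomposing $G \setminus H$, and a collection $\P_4$ of at most $n$ paths decomposing $H$. I would set $\P := \P_1 \cup \P_2 \cup \P_3 \cup \P_4$, which has size $O(n)$, and check separation by cases on two distinct edges $e, f \in G \setminus G_1$: if $e, f \in G \setminus H$, some path of $\P_1$ works; if $e, f \in H \setminus G_1$, some path of $\P_2$ works; if $e \in G \setminus H$ and $f \in H$, then the path in $\P_3$ containing $e$ lies entirely in $G \setminus H$ and hence avoids $f$; and symmetrically if $e \in H$ and $f \in G \setminus H$, the path in $\P_4$ containing $e$ lies in $H$ and avoids $f$. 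These four cases are exhaustive because $G_1 \subseteq H$.

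There is no serious obstacle here; the argument is essentially the same bookkeeping trick already carried out in the proof of \Cref{cor:one-step}. The only point that requires any thought is the degree computation above, which just uses that the map $d \mapsto C(\log d)^{420}$, iterated twice, lands below $\log d$ once $d$ is sufficiently large.
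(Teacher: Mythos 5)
Your proposal is correct and follows essentially the same route as the paper: apply \Cref{cor:one-step} twice, note that $d \mapsto O((\log d)^{420})$ iterated twice drops below $\log d$, and handle cross edges with two path decompositions via \Cref{thm:lovasz} (the paper decomposes $G_1 \setminus G_2$ rather than all of $H$, but this difference is immaterial).
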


	\begin{proof}
		We apply \Cref{cor:one-step} twice. Denote by $G_1$ and $\P_1$ the subgraph and collection of paths resulting from the first application, and let $G_2$ and $\P_2$ be the subgraph and collection of paths resulting from a second application with the graph $G_1$. Apply \Cref{thm:lovasz} to get a collection $\P_1'$ of at most $n$ paths that decomposes $G \setminus G_1$, and define $\P_2'$ similarly with respect to $G_1 \setminus G_2$. Write $\Q = \P_1 \cup \P_2 \cup \P_1' \cup \P_2'$. Then $|Q| = O(n)$ and $Q$ separates the edges of $G \setminus G_2$ (this is easy to verify using a similar analysis to the proof of \Cref{cor:one-step}).

		By the properties of $G_1$ and $G_2$ given by \Cref{cor:one-step} we have $d(G_1) = O((\log d)^{420})$ and $d(G_2) = O((\log d(G_1))^{420}) = O((\log \log d)^{420})$. Since $d$ is large, it follows that $d(G_2) \le \log d$. The subgraph $G_2$ and collection $Q$ satisfy the requirements of the corollary.
	\end{proof}

	Finally, we deduce our main result (\Cref{thm:main}) from \Cref{cor:two-steps}.

	\begin{proof}[Proof of \Cref{thm:main}]
		Write $G_0 := G$ and $d := d(G)$. We define sequences $(G_i)_{i \ge 1}$ and collections of paths $(\P_i)_{i \ge 1}$ as follows. 
		As long as $d(G_i)$ is large enough to apply \Cref{cor:two-steps}, let $G_{i+1}$ and $\P_{i+1}$ be the graph and collection of paths resulting from an application of \Cref{cor:two-steps} to $G_i$.
		Then $d(G_{i+1}) \le \log d(G_i)$, $|\P_{i+1}| = O(n)$ and $\P$ separates the edges of $G_i \setminus G_{i+1}$.
		Suppose that the last graph we defined was $G_k$. Then $k \le \logstar n$ and $e(G_k) = O(n)$.
		For $i < k$ apply \Cref{thm:lovasz} to obtain a collection of paths $\P_i'$ that decomposes $G_i \setminus G_{i+1}$ and let $\P_k' := E(G_k)$.

		Let $\Q := \bigcup_{0 \le i < k} \P_i \cup \bigcup_{0 \le i \le k} \P_i'$.
		Then $|\Q| = O(nk) = O(n \logstar n)$. We claim that $\Q$ separates $E(G)$. To see this, let $e$ and $f$ be two edges in $G$, let $i$ be such that $e \in G_i \setminus G_{i+1}$ (with $i = k$ if $e \in G_k$), and define $j$ similarly with respect to $f$.
		If $i \neq j$ then there is a path $P \in \cP_i'$ that contains $e$ but not $f$. If $i = j \neq k$ then there is a path $P \in \cP_i$ that contains $e$ but not $f$. Finally, if $i = j = k$, then $e$ is a path in $\cP_k'$ which contains $e$ but not $f$.
		Thus $\Q$ is a collection of $O(n \logstar n)$ paths that separates the edges of $G$.
	\end{proof}

\section{Decomposition into expanders} \label{sec:decompose-expanders}

	We say that a graph $G$ on $n$ vertices is an \emph{$(\eps, s, t)$-expander} if $|N_{G - F}(X)| \ge \frac{\eps|X|}{(\log|X| + 1)^2}$ for every $X \subseteq V(G)$ and $F \subseteq E(G)$ satisfying $1 \le |X| \le 2n/3$ and $|F| \le s \cdot \min\{|X|, t\}$.
	An \emph{$(\eps, s)$-expander} is an $(\eps, s, 2n/3)$-expander (namely, $F$ is not restricted by $t$). An \emph{$\eps$-expander} is an $(\eps, 0)$-expander (namely, $F$ is always empty).
	This definition builds on previously defined notions of expansion: Koml\'os and Szemer\'edi \cite{komlos1996topological,komlos1994topological} introduced a notion of sublinear expanders, which are (up to a slight change in parameters) $\eps$-expanders in our notation.
	Haslegrave, Kim and Liu \cite{haslegrave2021extremal} introduced a notion of robust sublinear expanders, which are $(\eps, s)$-expanders in our notation. 
	We add the parameter $t$, which provides a cut-off in the robustness: vertex sets of size at most $t$ expand robustly, whereas larger vertex sets expand less robustly.

	The following lemma shows that every graph $G$ can be decomposed into expanders, or almost decomposed into robust expanders. This is a variant of Lemma 14 in \cite{bucic2022erdos} and the proof is similar but more technical. The main difference is the introduction of the parameter $t$, which allows us to obtain expanders where small sets of vertices expand robustly, while not leaving too many edges uncovered.
	As a sanity check, note that as $s$ and $t$ increase, the expansion property becomes stronger, and the potential number of uncovered edges grows.

	We will apply this lemma with several regimes for $s$ and $t$: in \Cref{sec:dense-expanders} we take $s$ to be polylogarithmic in $n$ (and take $t = 2n/3$). In \Cref{sec:sparse-expanders} we apply this lemma twice, once with $s = 0$ (in which case the choice of $t$ does not matter), and once with $s$ a large constant and $t$ being the average degree of the ground graph.

	\begin{lemma} \label{lem:decompose-expanders}
		Let $s \ge 0$, $t \ge 1$ and let $0 < \eps \le 1/48$.
		Let $G$ be a graph on $n$ vertices. Then there is a collection $\HH$ of pairwise edge-disjoint $(\eps, s, t)$-expanders that covers all but at most $48sn (\log t + 1)^2$ edges of $G$ and satisfies $\sum_{H \in \HH}|H| \le 2n$.
	\end{lemma}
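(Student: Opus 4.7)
The plan is to build $\HH$ by iteratively splitting non-expanders into smaller pieces, following the strategy of Koml\'os--Szemer\'edi as adapted to the robust setting by Buci\'c--Montgomery. Maintain a collection $\cF$ of pairwise edge-disjoint subgraphs of $G$, initially $\cF = \{G\}$, together with a set $E_{\rm bad}$ of discarded edges, initially empty. As long as some $G' \in \cF$ is not an $(\eps, s, t)$-expander, pick a witness: sets $X \subseteq V(G')$ and $F \subseteq E(G')$ with $1 \le |X| \le 2|V(G')|/3$, $|F| \le s \min(|X|, t)$, and $|N_{G' - F}(X)| < \eps |X| / (\log|X|+1)^2$. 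Writing $Y := X \cup N_{G' - F}(X)$, replace $G'$ in $\cF$ by the two edge-disjoint subgraphs $G'_1$ on $Y$ consisting of all edges of $G' - F$ incident to $X$, and $G'_2 := (G' - F)[V(G') \setminus X]$; add $F$ to $E_{\rm bad}$. Terminate when every member of $\cF$ is an $(\eps, s, t)$-expander and output $\HH := \cF$.

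\textbf{Termination and sum of sizes.} Since $1 \le |X| \le 2|V(G')|/3$ and $|N_{G'-F}(X)| \le \eps|X| \le (1/48)\cdot 2|V(G')|/3$, both $|V(G'_1)| = |X| + |N_{G'-F}(X)|$ and $|V(G'_2)| = |V(G')| - |X|$ are strictly smaller than $|V(G')|$, so the process terminates. At each split, $\sum_{H \in \cF}|V(H)|$ grows by $|N_{G' - F}(X)| < \eps|X|/(\log|X|+1)^2$. To show the total increase over all splits is at most $n$ (yielding $\sum_{H \in \HH}|H| \le 2n$), I would invoke the standard Koml\'os--Szemer\'edi potential-function argument: choose a slowly-varying weight $\psi$ so that $\Psi(m) := m\psi(m)$ satisfies the subadditivity inequality $\Psi(m) \ge \Psi(x + \eps x/(\log x + 1)^2) + \Psi(m - x)$ for all $1 \le x \le 2m/3$; the assumption $\eps \le 1/48$ is calibrated precisely so that this inequality holds.

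\textbf{Discarded edges.} Each split contributes at most $s\min(|X|, t)$ to $|E_{\rm bad}|$, so it suffices to show $\sum_{\text{splits}} \min(|X|, t) \le 48 n (\log t + 1)^2$. For this I would charge each split to the vertices of $X$ and track, for each $v \in V(G)$, the chain $G^{(0)} \supseteq G^{(1)} \supseteq \ldots$ of subgraphs of $\cF$ containing $v$ across time. Whenever $v \in X$ in some split, $v$ lands in $G'_1 = G^{(i+1)}$, whose order is at most $(1 + \eps)|X|$; combined with the fact that $|V(G^{(i)})|$ shrinks by a factor at least $3/2$ whenever $v \notin X$, one obtains that the number of splits in which $v \in X$ with $|X| \le t$ is $O((\log t + 1)^2)$ per vertex (the expansion deficit factor $(\log|X|+1)^2$ enters here). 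Splits with $|X| > t$ are controlled similarly using the fact that the parent satisfies $|V(G')| > 3t/2$. Summing over $v$ yields the required bound; the constant $48$ matches the reciprocal $1/\eps$ appearing in the expansion inequality.

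\textbf{Main obstacle.} The technical core of the argument is the Koml\'os--Szemer\'edi potential-function inequality supporting the bound $\sum_{H \in \HH}|H| \le 2n$; designing $\psi$ so that the inflation $x \mapsto x + \eps x/(\log x+1)^2$ is absorbed uniformly over all admissible splits is the delicate step, and the constant $\eps \le 1/48$ must be threaded carefully through. Once this is in place, incorporating the new parameter $t$ into the edge-loss bound is a more routine vertex-charging calculation, but one must treat the regimes $|X| \le t$ and $|X| > t$ separately and verify that both contribute $O(n(\log t+1)^2)$ with the right constant.
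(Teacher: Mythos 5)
Your splitting procedure and your plan for proving $\sum_{H\in\HH}|H|\le 2n$ match the paper's: the paper runs a single induction on $n$ with the potential $\Phi(m)=m\bigl(2-\frac{1}{\log m+1}\bigr)$ (so $\psi(m)=2-\frac{1}{\log m+1}$ is exactly the weight you ask for), and verifies the subadditivity inequality you state via a few short calculus claims. So the step you defer as the ``main obstacle'' is indeed doable along the lines you indicate, but it is not carried out in your write-up; moreover the paper proves a \emph{stronger} version of that inequality, namely that the total potential drops by at least $\frac{\min\{|G_1|,t\}}{24(\log t+1)^2}$ at every split, which delivers the order-sum bound and the discarded-edge bound simultaneously within the same induction.

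The genuine gap is in your separate vertex-charging argument for the discarded edges. First, the claim that the container of $v$ ``shrinks by a factor at least $3/2$ whenever $v\notin X$'' is false, and in fact backwards: when $v\notin X$ it lands in $G'_2$, whose order is $|V(G')|-|X|$, and $|X|$ may be as small as $1$; the constant-factor shrinkage happens precisely when $v\in X$, since then the new container has order at most $(1+\eps)|X|\le(1+\eps)\tfrac23|V(G')|$. Second, a vertex of $N_{G'-F}(X)$ belongs to \emph{both} $G'_1$ and $G'_2$ (the pieces are only edge-disjoint), so there is no single chain $G^{(0)}\supseteq G^{(1)}\supseteq\cdots$ of containers for $v$: the history of $v$ is a branching tree, and any per-vertex count must range over all branches, whose number is controlled only by the bound $\sum_{H}|H|\le 2n$ that you prove separately. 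A charging argument can be salvaged after these repairs (each occurrence of $v\in X$ shrinks that branch's container by a constant factor; once such a split has $|X|\le t$ the container has order at most $(1+\eps)t$, giving $O(\log t+1)$ further such splits along that branch, while splits with $|X|>t$ contribute geometrically decaying charges $t/|X|$), but the mechanism you describe does not justify your claimed $O((\log t+1)^2)$ per-vertex bound and, as written, does not yield the stated $48sn(\log t+1)^2$ bound. The paper sidesteps all of this by folding the term $\frac{\min\{|G_1|,t\}}{24(\log t+1)^2}$ into the inductive potential inequality itself.
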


	\begin{proof}
		We leave a few simple calculus claims (see \Cref{claim:calc-1,claim:calc-2,claim:calc-3}) until after this proof. Notice that we may assume $t \le \frac{2n}{3}$, by our definition of expanders.

		We prove the statement by induction on $n$, with the following slightly stronger upper bounds: $24 sn \cdot (\log t + 1)^2 (2 - \frac{1}{\log n + 1})$ on the number of edges removed, and $n \cdot (2 - \frac{1}{\log n + 1})$ on the sum of orders. 
		If $G$ is itself an $(\eps, s, t)$-expander then we may take $\HH = \{G\}$, noting that $n \cdot (2 - \frac{1}{\log n + 1}) \ge n$.

		Suppose then that $G$ is not an $(\eps, s, t)$-expander. In particular, $n \ge 2$ because the singleton graph is an $(\eps, s, t)$-expander. This means that there are subsets $X \subseteq V(G)$ and $F \subseteq E(G)$ such that $1 \le |X| \le 2n/3$, $|F| \le s \cdot \min\{|X|, t\}$, and $|N_{G - F}(X)| \le \frac{\eps|X|}{(\log|X| + 1)^2}$. Let $G_1 := G[X \cup N_{G - F}(X)]$ and let $G_2$ be the subgraph obtained from $G - X$ by removing the edges with both ends in $N_{G - F}(X)$. 
		Then $G_1$ and $G_2$ are edge-disjoint, together they cover all edges in $G - F$, and they satisfy 
		\begin{align} \label{eqn:g1g2}
			\begin{split}
				n \le |G_1| + |G_2| 
				= n + |N_{G \setminus F}(X)|
				& \le n + \frac{\eps |X|}{(\log |X| + 1)^2} \\ 
				& \le n + \frac{3\eps |G_1|}{(\log |G_1| + 1)^2}
				\le n + \frac{|G_1|}{16(\log |G_1| + 1)^2},
			\end{split}
		\end{align}
		where the penultimate inequality follows from \Cref{claim:calc-1} stated below.

		The bulk of the calculations needed for this lemma will be performed in the following claim.
		\begin{claim} \label{claim:calc-main}
			\begin{align*}
				\, |G_1|\left(2 - \frac{1}{\log|G_1| + 1}\right) + |G_2|\left(2 - \frac{1}{\log |G_2| + 1}\right)  + \frac{\min\{|G_1|, t\}}{24(\log t + 1)^2} \le  n\left(2 - \frac{1}{\log n + 1}\right).
			\end{align*}
		\end{claim}
		\begin{proof}
			Write $D$ for the difference between the left-hand side and the right-hand side.
			\begin{align*}
				D & = |G_1|\left(2 - \frac{1}{\log|G_1| + 1}\right) + |G_2|\left(2 - \frac{1}{\log |G_2| + 1}\right) - n\left(2 - \frac{1}{\log n + 1}\right) + \frac{\min\{|G_1|, t\}}{24(\log t + 1)^2}\\
				& = 2\left(|G_1| + |G_2| - n\right) - \frac{|G_1|}{\log |G_1| + 1} - \frac{|G_2|}{\log|G_2| + 1} + \frac{n}{\log n + 1} + \frac{\min\{|G_1|, t\}}{24(\log t + 1)^2}\\
				& \le \, \frac{|G_1|}{8(\log |G_1| + 1)^2} - \frac{|G_1|}{\log |G_1| + 1} - \frac{|G_2|}{\log n + 1} + \frac{n}{\log n + 1} + \frac{\min\{|G_1|, t\}}{24(\log t + 1)^2}\\
				& \le \, |G_1|\left( \frac{1}{8(\log |G_1| + 1)^2} - \frac{1}{\log |G_1| + 1} + \frac{1}{\log n + 1} + \frac{\min\{1, t/|G_1|\}}{24(\log t + 1)^2}\right),
			\end{align*}	
			where for the first inequality we used \eqref{eqn:g1g2}.
			We consider two cases: $|G_1| \ge t$ and $|G_1| < t$.
			In the first case, using \Cref{claim:calc-1}, we have 
			\begin{equation*}
				\frac{\min\{1, t/|G_1|\}}{24(\log t + 1)^2} 
				= \frac{t}{24|G_1|(\log t + 1)^2}
				\le \frac{3|G_1|}{24|G_1|(\log |G_1| + 1)^2}
				= \frac{1}{8(\log |G_1| + 1)^2}.
			\end{equation*}
			Thus, noting that $|G_1| \le |X| + \frac{\eps|X|}{(\log |X| + 1)^2} \le |X| \left(1 + \frac{1}{24}\right) \le \frac{2n}{3} \cdot \frac{25}{24} \le \frac{3n}{4}$, and using \Cref{claim:calc-3},
			\begin{align*}
				D  
				& \le \, |G_1|\left( \frac{1}{4(\log |G_1| + 1)^2} - \frac{1}{\log |G_1| + 1} + \frac{1}{\log n + 1} \right) \le 0. 
			\end{align*}
			In the second case, by \Cref{claim:calc-2}, we have $\frac{1}{8(\log|G_1|+1)^2} - \frac{1}{\log|G_1|+1} \le \frac{1}{8(\log t + 1)^2} - \frac{1}{\log t + 1}$. Thus,
			\begin{align*}
				D & \le |G_1| \left(\frac{1/8 + 1/24}{(\log t + 1)^2} - \frac{1}{\log t + 1} + \frac{1}{\log n + 1}\right) \\
				& \le |G_1| \left(\frac{1}{4(\log t + 1)^2} - \frac{1}{\log t + 1} + \frac{1}{\log n + 1}\right) \le 0,
			\end{align*}
			where the last inequality follows from \Cref{claim:calc-3}, using $1 \le t \le 2n/3$.
			Either way, $D \le 0$, as required.
		\end{proof}

		For $i \in [2]$, apply the induction hypothesis to $G_i$ to obtain a family $\HH_i$ of pairwise edge-disjoint subgraphs of $G_i$ that cover all but at most $24s(\log t + 1)^2|G_i|(2 - \frac{1}{\log |G_i| + 1})$ edges of $G_i$ and satisfy $\sum_{H \in \HH_i} |H| \le |G_i|\left(2 - \frac{1}{\log |G_i| + 1}\right)$. Take $\HH = \HH_1 \cup \HH_2$. By \Cref{claim:calc-main}, the graphs in $\HH$ cover all but at most the following number of edges in $G$:
		\begin{align*}
			& 24s(\log t + 1)^2 \left( |G_1|\left(2 - \frac{1}{\log|G_1| + 1}\right) + |G_2|\left(2 - \frac{1}{\log |G_2| + 1}\right) + \frac{\min\{|G_1|, t\}}{24(\log t + 1)^2}\right) \le \\
			& 24sn(\log t + 1)^2 \left(2 - \frac{1}{\log n + 1}\right).
		\end{align*}
		Similarly, 
		\begin{align*}
			 \sum_{H \in \HH}|H| \le 
			 \sum_{H \in \HH_1}|H| + \sum_{H \in \HH_2}|H| 
			 & \le |G_1| \left(2 - \frac{1}{\log|G_1| + 1}\right) + |G_2| \left(2 - \frac{1}{\log|G_2| + 1}\right) \\
			 & \le n \left(2 - \frac{1}{\log n + 1}\right).
		\end{align*}
		So $\HH$ satisfies the requirements, proving the induction hypothesis.
	\end{proof}

	We now state and prove the calculus claims used in the proof of \Cref{lem:decompose-expanders}.

	\begin{claim} \label{claim:calc-1}
		$\frac{x}{(\log x + 1)^2} \le \frac{3y}{(\log y+1)^2}$ for integers $x,y$ with $1 \le x \le y$.
	\end{claim}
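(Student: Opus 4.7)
The plan is to analyse the real-valued function $f(x) := \frac{x}{(\log x + 1)^2}$ on $[1, \infty)$, where $\log$ is base $2$. First I would differentiate: writing $\log x = (\ln x)/\ln 2$, a short computation gives
\begin{equation*}
f'(x) = \frac{(\log x + 1) - \tfrac{2}{\ln 2}}{(\log x + 1)^3},
\end{equation*}
which vanishes exactly when $\log x + 1 = \tfrac{2}{\ln 2}$, i.e.\ at $x_* = 2^{\,2/\ln 2 - 1} = e^2/2 \approx 3.69$. Thus $f$ is strictly decreasing on $[1, x_*]$ and strictly increasing on $[x_*, \infty)$, so its global minimum on $[1,\infty)$ is
\begin{equation*}
f(x_*) = \frac{e^2/2}{(2/\ln 2)^2} = \frac{e^2 (\ln 2)^2}{8}.
\end{equation*}
A direct numerical estimate gives $f(x_*) > 1/3$ (one can verify this elementarily by checking $3 e^2 (\ln 2)^2 > 8$, since $e^2 > 7.38$ and $(\ln 2)^2 > 0.48$).

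With this in hand, I would split the proof of the claim into two cases. If $x \ge 4$, then $x \ge x_*$, so $f$ is monotonically increasing on $[x, y]$ and hence $f(x) \le f(y) \le 3 f(y)$. If instead $x \in \{1, 2, 3\}$, then since $f$ is decreasing on $[1, x_*]$ we have $f(x) \le f(1) = 1$; and by the previous paragraph $f(y) \ge f(x_*) > 1/3$, so $3 f(y) > 1 \ge f(x)$. In either case, $f(x) \le 3 f(y)$, as required.

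There is essentially no obstacle here; the only point that requires a small amount of care is the numerical verification that the minimum value $e^2 (\ln 2)^2/8$ exceeds $1/3$, which is straightforward. (Alternatively one can avoid this step altogether by noting that among integers the minimum of $f$ is attained at $x = 4$ with $f(4) = 4/9 > 1/3$, and the same two-case split then gives the bound with the factor $3$ replaced by the slightly better constant $9/4$.)
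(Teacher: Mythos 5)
Your proposal is correct and follows essentially the same route as the paper: differentiate $f(x) = \frac{x}{(\log x+1)^2}$, use monotonicity for $x \ge 4$, and handle $x \in \{1,2,3\}$ via the bounds $f(x) \le 1$ and $f(y) \ge 1/3$ (the paper checks $1/3 \le f(x) \le 1$ on small integers rather than computing the exact real minimum $e^2(\ln 2)^2/8$, but this is the same argument). No gaps.
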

	\begin{proof}
		Consider the function $f(x) = \frac{x}{(\log x + 1)^2}$. Then
		\begin{equation*}
			f'(x) = \frac{(\log_e(2))^2 \log_e(2x/e^2)}{(\log_e(2x))^3}.
		\end{equation*}
		In particular, $f'(x) \ge 0$ for $x \ge 4$, showing $f(x) \le f(y) \le 3f(y)$ when $4 \le x \le y$ (using $f(y) \ge 0$ for $y \ge 4$).
		A calculation shows $1/3 \le f(x) \le 1$ for $x \in [4]$. It follows that $f(x) \le 3f(y)$ for $x \in [3]$ and any integer $y \ge 1$.
	\end{proof}

	\begin{claim} \label{claim:calc-2}
		$\frac{1}{8(\log x + 1)^2} - \frac{1}{\log x + 1} \le \frac{1}{8(\log y + 1)^2} - \frac{1}{\log y + 1}$ when $1 \le x \le y$.
	\end{claim}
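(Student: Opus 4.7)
The plan is to reduce the claim to showing that a single-variable function is monotone non-decreasing on $[1, \infty)$. Set $u = \log x + 1$ and $v = \log y + 1$; since $\log$ is monotone and $1 \le x \le y$, we have $1 \le u \le v$. The inequality becomes $g(u) \le g(v)$, where
\begin{equation*}
    g(u) = \frac{1}{8u^2} - \frac{1}{u}.
\end{equation*}
So it suffices to show that $g$ is non-decreasing on $[1, \infty)$.

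This is a routine calculus check. Differentiating gives
\begin{equation*}
    g'(u) = -\frac{1}{4u^3} + \frac{1}{u^2} = \frac{1}{u^2}\left(1 - \frac{1}{4u}\right),
\end{equation*}
which is non-negative whenever $u \ge 1/4$, and in particular for all $u \ge 1$. Hence $g$ is non-decreasing on $[1, \infty)$, so $g(u) \le g(v)$, which is the desired inequality.

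There is no real obstacle here; the only thing to be slightly careful about is the substitution $u = \log x + 1 \ge 1$, which uses the hypothesis $x \ge 1$ (so that $\log x \ge 0$) together with the fact that all logarithms in the paper are base $2$, as specified in \Cref{subsec:notation}. Everything else is a one-line derivative computation.
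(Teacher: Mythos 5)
Your proof is correct and takes essentially the same route as the paper: both establish the inequality by showing the relevant function is non-decreasing via a derivative computation, with your substitution $u = \log x + 1$ merely simplifying the differentiation (the paper differentiates $f(x) = \frac{1}{8(\log x+1)^2} - \frac{1}{\log x+1}$ directly in $x$).
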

	\begin{proof}
		Write $f(x) = \frac{1}{8(\log x + 1)^2} - \frac{1}{\log x + 1}$. 
		Then the derivative $f'(x)$ satisfies the following for $x \ge 1$. 
		\begin{equation*}
			f'(x) = \frac{\log_e(2)(4 \log_e x + \log_e 8)}{4x(\log_e(2x))^3} \ge 0.
		\end{equation*}
		In particular, $f$ is increasing in the range $x \ge 1$, as required.
	\end{proof}

	\begin{claim} \label{claim:calc-3}
		The following holds for $1 \le x \le \log(3n/4) + 1$ and $n \ge 2$.
		\begin{equation*}
			\frac{1}{4x^2} - \frac{1}{x} + \frac{1}{\log n + 1} \le 0.
		\end{equation*}
	\end{claim}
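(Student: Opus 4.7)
The plan is to reduce the two-variable inequality to a check at the right endpoint of the allowed range of $x$, and then verify a cleaner one-variable inequality in $n$. Write $g(x) := \frac{1}{x} - \frac{1}{4x^2}$, so that the desired statement becomes $g(x) \ge \frac{1}{\log n + 1}$ whenever $1 \le x \le \log(3n/4) + 1$. Computing $g'(x) = \frac{1 - 2x}{2x^3}$, I see that $g' \le 0$ for $x \ge 1/2$, so $g$ is decreasing on $[1, \infty)$. It therefore suffices to verify the inequality at the right endpoint $x_0 := \log(3n/4) + 1$, which I would first rewrite as $x_0 = \log(3n/2)$ using $1 = \log 2$.

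Setting $a := \log(3n/2)$ and $b := \log n + 1 = \log(2n)$, it remains to show $\frac{1}{a} - \frac{1}{4a^2} \ge \frac{1}{b}$. Using $\frac{1}{a} - \frac{1}{b} = \frac{b - a}{ab} = \frac{\log(4/3)}{ab}$, this rearranges to the one-variable inequality
\[
4 \log(4/3) \cdot \log(3n/2) \ge \log(2n).
\]
Since $4 \log(4/3) \approx 1.66 > 1$, both sides are affine in $\log n$ with a strictly larger coefficient on the left, so the inequality holds for all $n$ past some threshold; the base case $n = 2$ is a direct numerical check (yielding roughly $2.63 \ge 2$).

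There is no genuine obstacle here: the argument is a short calculus exercise. The only subtlety worth flagging is the pair of identities $\log(3n/4) + 1 = \log(3n/2)$ and $\log n + 1 = \log(2n)$, which line up the two sides of the inequality into directly comparable form; once these are in place, monotonicity of $g$ and a single one-variable comparison finish the job.
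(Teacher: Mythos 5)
Your proof is correct and is essentially the same kind of elementary calculus verification as the paper's: the paper clears denominators to get the quadratic $4x^2 - 4x(\log n+1) + (\log n + 1)$ and uses that its sublevel set is an interval to check the two endpoints $x=1$ and $x=\log(3n/4)+1$, while you use monotonicity of $\tfrac{1}{x}-\tfrac{1}{4x^2}$ to check only the right endpoint and then rearrange to the affine comparison $4\log(4/3)\log(3n/2) \ge \log(2n)$. Both routes are sound; yours is marginally leaner since it needs only one endpoint check plus the value at $n=2$.
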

	\begin{proof}
		Write $f(x) = \log n + 1 - 4x(\log n + 1) + 4x^2$, and notice that it suffices to prove $f(x) \le 0$ for $1 \le x \le \log(2n/3) + 1$.

		Since the set of real numbers $x$ for which $f(x) \le 0$ is an interval, it suffices to show that $f(x) \le 0$ for $x = 1$ and $x = \log(2n/3) + 1$. 
		\begin{align*}
			f(1) & = \log n + 1 - 4(\log n + 1) + 4 = 1 - 3\log n \le 0. \\ \\
			f(\log(3n/4) + 1) & = \log n + 1 - 4(\log(3n/4) + 1)(\log n + 1) + 4(\log (3n/4) + 1)^2 \\
			& = \log n + 1 - 4(\log n + \log(3/2))(\log n + 1) + 4(\log n + \log(3/2))^2 \\
			& = \log n \cdot (1 - 4 - 4\log(3/2) + 8\log(3/2)) + 1 - 4\log(3/2) + 4(\log(3/2))^2 \\
			& \le - \frac{1}{2} \cdot \log n + \frac{1}{2} \le 0.
		\end{align*}
		This completes the proof of the claim.
	\end{proof}

\section{Finding vertex-disjoint paths through a random vertex set} \label{sec:random}

	In this section we prove \Cref{lem:connecting}, which allows for connecting pairs of vertices $(x_1, y_1), \ldots, (x_r, y_r)$ in a robust expander through a random set of vertices via vertex-disjoint paths, provided that any subset of the vertices $x_1, \ldots, x_r, y_1, \ldots, y_r$ expands well. This is a variant of Theorem 10 in \cite{bucic2022erdos}, which does something very similar, but avoids the expansion assumption on $x_1, \ldots, x_r, y_1, \ldots, y_r$ and only requires the paths to be edge-disjoint.
	Our proof follows \cite{bucic2022erdos} very closely. We note that, as mentioned also in \cite{bucic2022erdos}, some of the ideas that appear in the proof are due to Tomon \cite{tomon2022robust}.

	We will introduce the relevant preliminaries (including a different notion of expansion) in \Cref{subsec:prelims}, we then prove \Cref{lem:connecting} in three steps, given in \Cref{subsec:random-step-one,subsec:expansion-step-two,subsec:expansion-step-three}.

	\subsection{Preliminaries} \label{subsec:prelims}
		Say that a graph $G$ on $n$ vertices is a \emph{weak $(\eps, s)$-expander} if $|N_{G - F}(U)| \ge \frac{\eps|U|}{(\log n)^2}$ for every $U \subseteq V(G)$ and $F \subseteq E(G)$ with $1 \le |U| \le 2n/3$ and $|F| \le s|U|$. In \cite{bucic2022erdos} such a graph is called simply an $(\eps, s)$-expander; we add `weak' to its name to distinguish it from $(\eps, s)$-expanders, which have stronger expansion properties for smaller sets of vertices.
		Note that an $(\eps, s)$-expander on at least two vertices is a weak $(\frac{\eps}{2}, s)$-expander.

		We will use the following two propositions from \cite{bucic2022erdos}, both of which show that every not-too-large vertex set $U$ in an expander either `expands well' (i.e.\ has a large neighbourhood) or `expands robustly' (i.e.\ there are many vertices in $N(U)$ with many neighbours in $U$). We state a slightly simplified version of the propositions, where there is no set of forbidden edges, because we will not use this feature.

		\begin{proposition}[Proposition 12 in \cite{bucic2022erdos}] \label{prop:strong-expansion}
			Let $\eps > 0$, let $0 < d \le s$. Suppose that $G$ is an $n$-vertex $(\eps, s)$-weak expander. Then one of the following holds, for every set $U \subseteq V(G)$ with $|U| \le \frac{2n}{3}$.
			\begin{itemize}
				\item
					$|N(U)| \ge \frac{s|U|}{2d}$, or
				\item
					$\left|\{u \in V(G) : |N(u) \cap U| \ge d\}\right| \ge \frac{\eps |U|}{(\log n)^2}$.
			\end{itemize}
		\end{proposition}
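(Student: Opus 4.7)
The plan is to obtain the dichotomy by applying the weak-expansion property to $U$ with a carefully chosen set $F$ of forbidden edges, so as to force the guaranteed neighbourhood of $U$ in $G - F$ to lie inside the "high-degree-into-$U$" set.

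More precisely, assume the first alternative fails, i.e.\ $|N(U)| < \frac{s|U|}{2d}$ (the case $|U|=0$ is trivial, and we may assume $1 \le |U| \le 2n/3$). Let
\begin{equation*}
    T \;=\; \{u \in V(G) : |N(u) \cap U| \ge d\}, \qquad T' \;=\; T \setminus U \;\subseteq\; N(U),
\end{equation*}
so it suffices to prove $|T'| \ge \frac{\eps |U|}{(\log n)^2}$. Define $F$ to be the set of all edges of $G$ with one end in $U$ and the other in $N(U) \setminus T'$. Every vertex of $N(U) \setminus T'$ has, by definition of $T'$, at most $d-1$ neighbours in $U$, so
\begin{equation*}
    |F| \;\le\; d \cdot |N(U) \setminus T'| \;\le\; d \cdot |N(U)| \;<\; d \cdot \frac{s|U|}{2d} \;=\; \frac{s|U|}{2} \;\le\; s|U|.
\end{equation*}
Hence $F$ is an admissible forbidden edge set for the weak $(\eps, s)$-expansion of $U$, giving $|N_{G - F}(U)| \ge \frac{\eps |U|}{(\log n)^2}$.

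The final step is to observe that $N_{G - F}(U) \subseteq T'$. Indeed, any $v \in N_{G - F}(U)$ lies in $V(G) \setminus U$ and has at least one edge to $U$ in $G - F$; but $F$ removed every edge from $U$ to $N(U) \setminus T'$, so $v \in T'$. Combining with $|T| \ge |T'|$ completes the argument. There is no real obstacle here: the only thing to get right is the precise definition of $F$ (edges from $U$ to $N(U) \setminus T'$, not to all of $N(U)$), which is what makes the bound $|F| \le d|N(U)|$ work and simultaneously forces $N_{G-F}(U)$ into $T'$.
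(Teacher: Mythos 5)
Your proof is correct: deleting the edges from $U$ to the vertices of $N(U)$ with fewer than $d$ neighbours in $U$ gives an admissible forbidden set ($|F| \le d|N(U)| < s|U|/2$), and robust weak expansion then forces $\frac{\eps|U|}{(\log n)^2}$ neighbours inside the high-degree set. The paper itself does not prove this proposition (it is imported from \cite{bucic2022erdos}), and your argument is essentially the standard one used there, so there is nothing to add.
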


		\begin{proposition}[Proposition 13 in \cite{bucic2022erdos}] \label{prop:stars}
			Let $\eps \ge 2^{-9}$, $s \ge 8(\log n)^{13}$, and let $n$ be large.
			Suppose that $G$ is an $(\eps, s)$-weak expander,
			and let $W \subseteq V(G)$ satisfy $|W| \le 2n/3$. Then $G$ contains one of the following
			\begin{itemize}
				\item
					a collection of at least $\frac{|W|}{(\log n)^{7}}$ pairwise vertex-disjoint stars of size at least $(\log n)^9$, whose centre is in $W$ and its leaves are in $V(G) - W$,
				\item
					a bipartite graph $H$ with parts $W$ and $X \subseteq V(G) - W$, such that
					\begin{itemize}
						\item
							$|X| \ge \frac{\eps|W|}{2(\log n)^2}$,
						\item
							every vertex in $X$ has degree at least $(\log n)^4$ and every vertex in $W$ has degree at most $2(\log n)^9$ in $H$.
					\end{itemize}
			\end{itemize}
		\end{proposition}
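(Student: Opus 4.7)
The plan is to prove the dichotomy via a greedy star-extraction procedure combined with a careful appeal to \Cref{prop:strong-expansion}. Write $Y = V(G) \setminus W$ and consider the bipartite subgraph $B$ of $G$ consisting of edges between $W$ and $Y$. Iteratively, while there exists a vertex $w$ in the current subset of $W$ with at least $(\log n)^9$ neighbours in the current subset of $Y$, form a star with centre $w$ and $(\log n)^9$ of its currently available $Y$-neighbours, and delete all $(\log n)^9 + 1$ vertices of this star from consideration. Let $S$ denote the number of stars produced, and let $W^* \subseteq W$ and $Y^* \subseteq Y$ denote what remains. Note $|W \setminus W^*| = S$ and $|Y \setminus Y^*| = S(\log n)^9$.

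If $S \ge |W|/(\log n)^{7}$, the first conclusion holds and we are done. Otherwise $S < |W|/(\log n)^{7}$, and in particular $|W^*| \ge |W|(1 - (\log n)^{-7}) \ge |W|/2$. The termination condition of the algorithm guarantees that every $w \in W^*$ has fewer than $(\log n)^9$ neighbours in $Y^*$ in $G$. I now apply \Cref{prop:strong-expansion} with $U = W^*$ and $d = (\log n)^4$. In the first alternative we would obtain $|N_G(W^*)| \ge s|W^*|/(2d) = 4(\log n)^{9}|W^*| \ge 2(\log n)^{9}|W|$; but $N_G(W^*) \subseteq (W \setminus W^*) \cup (Y \setminus Y^*) \cup (N_G(W^*) \cap Y^*)$, with $|W \setminus W^*| \le |W|/(\log n)^{7}$, $|Y \setminus Y^*| \le |W|(\log n)^{2}$, and $|N_G(W^*) \cap Y^*| \le \sum_{w \in W^*} d_G(w, Y^*) < (\log n)^{9}|W^*|$. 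For $n$ large, these upper bounds sum to less than $2(\log n)^{9}|W|$, a contradiction. Hence the second alternative of \Cref{prop:strong-expansion} holds: there is a set $X^* \subseteq V(G)$ with $|X^*| \ge \eps|W^*|/(\log n)^{2} \ge \eps|W|/(2(\log n)^{2})$, every vertex of which has at least $(\log n)^4$ neighbours in $W^*$.

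To obtain the second conclusion, I would take $X := X^* \cap Y$ and let $H$ be the bipartite subgraph of $G$ with parts $W$ and $X$ and edge set $E_G(W, X)$. Every $x \in X$ then has $d_H(x) \ge (\log n)^4$. For the $W$-side degree bound, I restrict $X$ further to $X \cap Y^*$: then $d_H(w) \le d_G(w, Y^*) < (\log n)^{9} \le 2(\log n)^{9}$ for every $w \in W$, as needed. Thus the task reduces to showing $|X^* \cap Y^*| \ge \eps|W|/(2(\log n)^{2})$ (possibly after a harmless rescaling of $\eps$, which the slack $\eps \ge 2^{-9}$ comfortably absorbs).

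The main obstacle is bounding the ``lost'' portion of $X^*$, namely $X^* \cap W$ (vertices in $W$ with many $W^*$-neighbours) and $X^* \cap (Y \setminus Y^*)$ (former star leaves with many $W^*$-neighbours). The latter loss is at most $S(\log n)^9 \le |W|(\log n)^{2}$, which on its own overwhelms our target $\eps|W|/(2(\log n)^{2})$ — this is the real difficulty. To get around it, I would modify the greedy step so that star leaves are chosen preferentially among vertices $y \in Y$ with $d_G(y, W) < (\log n)^4$ (``cheap'' leaves): since such a $y$ can never be in $X^*$ (even restricted to $W^*$, since $W^* \subseteq W$), removing it causes no loss. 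A cheap leaf for $w$ exists unless $w$ has at least $(\log n)^9$ neighbours in $\{y : d_G(y,W) \ge (\log n)^4\}$, in which case the star-producing step can already be fed from this ``expensive'' pool, and the star is paid for by edges of a type that also contribute to the candidate $X$. A short accounting then shows that either $S$ reaches $|W|/(\log n)^7$ or the surviving expensive pool has size $\ge \eps|W|/(2(\log n)^2)$, yielding the second conclusion. The $W$-vertices appearing in $X^*$ (vertices $w \in W$ with many $W^*$-neighbours) are handled by the symmetric fact that many such vertices force internal edges of $W$ and hence additional expansion into $Y$ via \Cref{prop:strong-expansion} applied to $X^* \cap W$, reducing this sub-case to one of the two main cases above.
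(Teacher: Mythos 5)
You have correctly set up the natural frame for this statement: take a maximal (greedy) collection of vertex-disjoint stars with centres in $W$ and $(\log n)^9$ leaves outside $W$; if it is small, let $W^*$ be the untouched part of $W$, use the termination condition to rule out the first alternative of \Cref{prop:strong-expansion} applied to $W^*$ with $d=(\log n)^4$, and extract the high-degree set $X^*$ from the second alternative. That part of your argument is correct (note that the paper does not prove \Cref{prop:stars} at all --- it is quoted from \cite{bucic2022erdos} --- so there is no in-paper proof to compare against). You also correctly identify the crux: $X^*$ need not lie in $V(G)-W$, and, worse, it may be swallowed by the set of used leaves, since the greedy may consume up to $S(\log n)^9\le |W|(\log n)^2$ leaves while $|X^*|$ is only guaranteed to be about $\eps|W|/(\log n)^2$ --- a deficit of a factor $(\log n)^4/\eps$. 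The $W$-side degree bound $2(\log n)^9$ is only available through the termination condition, which controls degrees of $W^*$ into the \emph{unused} outside vertices, so you cannot simply keep the used leaves in $X$ either.

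The gap is that neither of your two patches actually closes this. The ``cheap leaf'' preference does not bound the number of expensive leaves consumed: if a centre has few (or no) cheap available neighbours --- e.g.\ if essentially all outside neighbours of $W$ have at least $(\log n)^4$ neighbours in $W$ --- every star is built from expensive leaves, and the claim that such stars are ``paid for by edges that also contribute to the candidate $X$'' does not yield the second conclusion, precisely because having many expensive vertices does not by itself give the $W$-side degree cap; the promised ``short accounting'' is exactly the missing argument, and re-running the star extraction inside the expensive pool just reproduces the same circularity (the new extraction can again use up $|W|(\log n)^2$ leaves against a pool of size $\eps|W|/(2(\log n)^2)$). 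Similarly, the sentence handling $X^*\cap W$ is not a reduction: applying \Cref{prop:strong-expansion} to $X^*\cap W$ either returns a large neighbourhood that may again lie mostly inside $W\cup U$ (and even when it must spill outside $W$, turning those neighbours into $|W|/(\log n)^7$ \emph{vertex-disjoint} stars of size $(\log n)^9$, or into the degree-$(\log n)^4$ bipartite structure with the $2(\log n)^9$ cap, needs a further argument you do not supply), or returns another high-degree set subject to the same objections. Finally, the ``harmless rescaling of $\eps$'' is not available: the constant $\eps|W|/(2(\log n)^2)$ is fixed in the statement, and the only slack you have is the factor $|W^*|\ge(1-(\log n)^{-7})|W|$, which tolerates losing at most about half of $X^*$, not the polylogarithmically larger losses at stake. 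As it stands, the proof is incomplete at its decisive step.
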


		As in \cite{bucic2022erdos}, we also need the following hypergraph version of Hall's theorem, due to Aharoni and Haxell.
		\begin{theorem}[Aharoni--Haxell \cite{aharoni2000hall}] \label{thm:hall}
			Let $s,r \ge 1$ be integers, and let $\cH_1, \ldots, \cH_r$ be hypergraphs on the same vertex set whose edges have at most $s$ vertices. Suppose that, for every $I \subseteq [r]$, there is a matching in $\bigcup_{i \in I} \cH_i$ of size at least $s(|I|-1)$. Then there is a matching of size $r$ in $\bigcup_{i \in [r]} \cH_i$ whose $i^{\text{th}}$ edge is in $\cH_i$.
		\end{theorem}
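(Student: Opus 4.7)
The plan is to prove this hypergraph Hall theorem via the topological method pioneered by Aharoni and Haxell. For a hypergraph $\cF$, write $M(\cF)$ for its \emph{matching complex}: the simplicial complex on vertex set $E(\cF)$ whose simplices are sets of pairwise disjoint edges. A rainbow matching $\{e_1,\dots,e_r\}$ with $e_i\in\cH_i$ corresponds exactly to a simplex of $M\bigl(\bigcup_i\cH_i\bigr)$ containing one vertex from each $E(\cH_i)$, so the task reduces to finding such a rainbow simplex.

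To produce such a simplex, I would appeal to the colourful topological Hall theorem of Aharoni, Berger and Ziv (building on Meshulam): if $A_1,\dots,A_r$ are subsets of the vertex set of a simplicial complex $K$ such that, for every $I\subseteq[r]$, the induced subcomplex $K\bigl[\bigcup_{i\in I}A_i\bigr]$ is $(|I|-2)$-connected, then $K$ contains a simplex hitting each $A_i$. Applied with $K=M\bigl(\bigcup_i\cH_i\bigr)$ and $A_i=E(\cH_i)$, this reduces the task to a connectivity statement about the matching complexes of the unions $\bigcup_{i\in I}\cH_i$.

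The main obstacle, and the heart of the proof, is Meshulam's connectivity lemma: if $\cF$ has edges of size at most $s$ and matching number exceeding $s(k-1)$, then $M(\cF)$ is $(k-2)$-connected. Applied with $k=|I|$ to $\cF=\bigcup_{i\in I}\cH_i$, the hypothesis (read in the standard strict form $\nu\bigl(\bigcup_{i\in I}\cH_i\bigr)>s(|I|-1)$) yields exactly the $(|I|-2)$-connectivity demanded above, and the conclusion follows.

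Meshulam's lemma itself I would establish by an inductive edge-collapsing argument: peel off an edge $e$ of $\cF$ and compare $M(\cF)$ with $M(\cF-e)$ and with the link of $e$, which sits inside the matching complex of the hypergraph obtained by deleting the $\le s$ vertices of $e$; controlling the drop in matching number at each step shows the complex retains $(k-2)$-connectedness as long as $\nu(\cF)>s(k-1)$. A purely combinatorial alternative is Haxell's direct exchange argument, which inducts on $r$ via a maximum partial rainbow matching and uses the hypothesis to produce an augmenting-path-style reconfiguration; however, the topological route is cleaner and more widely adopted in this area.
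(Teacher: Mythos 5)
This theorem is not proved in the paper at all---it is quoted directly from Aharoni--Haxell \cite{aharoni2000hall}---and your outline (rainbow simplices in the matching complex, the colourful topological Hall theorem, and a Meshulam-type connectivity bound for matching complexes of hypergraphs with edges of size at most $s$, proved by an inductive link/deletion argument) is precisely the standard topological route, essentially the original argument. Your choice to read the hypothesis in the strict form $\nu\bigl(\bigcup_{i\in I}\cH_i\bigr) > s(|I|-1)$ is the right one: that is how the theorem is stated in \cite{aharoni2000hall}, and it is also the bound actually verified where the paper applies it, in the proof of \Cref{lem:connecting}.
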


		Additionally, we use the following martingale concentration result (see Chapter 7 in \cite{alon2016probabilistic}).
		We say that a function $f : \prod_{i = 1}^n	\Omega_i \to \bR$, where $\Omega_i$ are arbitrary sets, is \emph{$k$-Lipschitz} if $|f(u) - f(v)| \le k$ for every $u, v \in \prod_{i = 1}^n$ that differ on at most one coordinate.
		\begin{lemma} \label{lem:concentration}
			Let $X_1, \ldots, X_n$ be independent random variables, with $X_i$ taking values in a set $\Omega_i$ for $i \in [n]$, and write $X = (X_1, \ldots, X_n)$. 
			Suppose that $f : \prod_{i = 1}^n \Omega_i \to \bR$ is $k$-Lipschitz. Then,
			\begin{equation*}
				\Pr\left(|f(X) - \Ex f(X)| > t\right) \le 2 \exp \left(\frac{-2t^2}{k^2n}\right).
			\end{equation*}
		\end{lemma}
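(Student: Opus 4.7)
The plan is to prove this as a standard instance of the \emph{bounded differences inequality} (sometimes called McDiarmid's inequality), via the Doob martingale of $f(X)$ together with an Azuma--Hoeffding tail bound; the argument is entirely classical and a clean presentation can be found in Chapter 7 of \cite{alon2016probabilistic}, to which the statement already appeals.

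Concretely, I would first let $\cF_i$ denote the $\sigma$-algebra generated by $X_1, \ldots, X_i$ and set $Y_i := \Ex[f(X) \mid \cF_i]$, so that $Y_0 = \Ex f(X)$, $Y_n = f(X)$, and $(Y_i)_{i=0}^n$ is a martingale with respect to $(\cF_i)$. The conclusion sought is a two-sided tail bound on $Y_n - Y_0$.

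The main technical step is to control the conditional range of each increment $Y_i - Y_{i-1}$. Using independence of the $X_j$, one can write
\begin{equation*}
    A_i(x) := \Ex[f(X_1, \ldots, X_{i-1}, x, X_{i+1}, \ldots, X_n) \mid \cF_{i-1}],
\end{equation*}
so that $Y_i = A_i(X_i)$ and $Y_{i-1} = \Ex[A_i(X_i) \mid \cF_{i-1}]$ almost surely. The $k$-Lipschitz assumption yields $|A_i(x) - A_i(x')| \le k$ for every $x, x' \in \Omega_i$, and hence, conditional on $\cF_{i-1}$, the increment $Y_i - Y_{i-1}$ is supported on an interval of length at most $k$.

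With this conditional range bound in hand, Hoeffding's lemma gives $\Ex[e^{\lambda(Y_i - Y_{i-1})} \mid \cF_{i-1}] \le e^{\lambda^2 k^2/8}$ for every real $\lambda$. Multiplying these estimates across $i$ via the tower property produces $\Ex[e^{\lambda(Y_n - Y_0)}] \le e^{\lambda^2 n k^2/8}$, and the standard Chernoff argument (optimising at $\lambda = 4t/(nk^2)$ and symmetrising by repeating the argument for $-f$) yields $\Pr(|Y_n - Y_0| > t) \le 2\exp(-2t^2/(nk^2))$, as required. The only delicate point is obtaining the correct constant in the exponent: a cruder application of Azuma using the pointwise bound $|Y_i - Y_{i-1}| \le k$, rather than the sharper conditional range bound, would lose a factor of $4$ in the exponent and fail to match the stated inequality.
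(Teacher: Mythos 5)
Your proof is correct: the Doob martingale of $f(X)$, the observation that independence plus the $k$-Lipschitz hypothesis confines each increment $Y_i - Y_{i-1}$ to a conditional interval of length at most $k$, Hoeffding's lemma, and the Chernoff optimisation at $\lambda = 4t/(nk^2)$ do yield the stated bound $2\exp(-2t^2/(k^2 n))$, and you rightly flag that the crude pointwise Azuma bound would lose the constant. The paper gives no proof of this lemma, citing Chapter 7 of \cite{alon2016probabilistic} instead, and your argument is precisely the standard bounded-differences (McDiarmid) proof found there, so there is nothing to reconcile.
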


	\subsection{Expansion into a random vertex set} \label{subsec:random-step-one}

		The following lemma shows that, given a sufficiently robust weak expander $G$ and a random set of vertices $V$, obtained by including each vertex independently with probability $1/6$, any set $U$ expands well in $V$, while avoiding a given small set of vertices $Z$, with quite large probability. This lemma is a variant of Lemma 17 in \cite{bucic2022erdos}, where instead of a forbidden set of edges we have a forbidden set of vertices, and the proof is essentially the same.

		\begin{lemma} \label{lem:expansion-random}
			Let $2^{-9} \le \eps < 1$, $s \ge 8(\log n)^{13}$ and let $n$ be large. Suppose that $G$ is an $n$-vertex weak $(\eps, s)$-expander and let $U, Z \subseteq V(G)$ be sets satisfying $|U| \ge (\log n)^{23}$ and $|Z| \le \frac{|U|}{(\log n)^{3}}$. Let $V$ be a random subset of $V(G)$, obtained by including each vertex independently with probability $1/6$.
			Then, with probability at least $1 - \exp\left(-\Omega\left(\frac{|U|}{(\log n)^{22}}\right)\right)$,
			\begin{equation*}
				\left| B^{(\log n)^{4}}_{G[V']}(U \cap V') \right| > \frac{|V|}{2},
			\end{equation*}
			where $V' := V \setminus Z$.
		\end{lemma}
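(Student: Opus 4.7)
The proof grows the ball $W_i := B^i_{G[V']}(U \cap V')$ iteratively, and shows that, with the claimed probability, $|W_{i+1}| \ge (1 + c/(\log n)^2)|W_i|$ at every step at which $|W_i| \le |V|/2$, where $c > 0$ is an absolute constant. A Chernoff bound on the $|U \setminus Z| \ge |U|(1 - (\log n)^{-3})$ independent indicators $\mathbf{1}[v \in V]$ (for $v \in U \setminus Z$) gives $|W_0| = |U \cap V'| \ge |U|/12$ with probability $1 - \exp(-\Omega(|U|))$, and a second Chernoff bound gives $|V| \le n/3$. So iterating the multiplicative growth for $t = O((\log n)^3)$ steps pushes $|W_t|$ past $|V|/2 \le n/6$, well within the allowed $(\log n)^4$ iterations. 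The strategy follows Lemma~17 of \cite{bucic2022erdos}, substituting random vertices for random edges throughout.

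For a single growth step, given $W_i$ with $|W_i| \le |V|/2 \le 2n/3$, I apply \Cref{prop:stars} in $G$ with $W := W_i$. Its first conclusion gives at least $|W_i|/(\log n)^7$ pairwise vertex-disjoint stars of size $\ge (\log n)^9$ with centres in $W_i \subseteq V'$ and leaves in $V(G) \setminus W_i$, yielding at least $|W_i|(\log n)^2$ candidate leaves; subtracting the at most $|Z| \le 12|W_i|/(\log n)^3$ leaves that lie in $Z$, at least $|W_i|(\log n)^2 /2$ leaves lie in $V(G) \setminus (W_i \cup Z)$. Each such leaf is in $V'$ independently with probability $1/6$, and any leaf falling in $V'$ joins $W_{i+1}$ through its centre in $W_i$. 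The second conclusion gives a bipartite graph with parts $W_i$ and $X \subseteq V(G) \setminus W_i$ of size $|X| \ge \eps|W_i|/(2(\log n)^2)$ in which every $x \in X$ has $\ge (\log n)^4 \ge 1$ neighbours in $W_i$; the same reasoning applied to $X \setminus Z$ (of size $\ge |X|/2$) produces the same conclusion. Either way the expected number of new vertices in $W_{i+1}$ is $\Omega(|W_i|/(\log n)^2)$, and a Chernoff-type concentration bound (via \Cref{lem:concentration}) gives a per-step failure probability of $\exp(-\Omega(|W_i|/(\log n)^2)) \le \exp(-\Omega(|U|/(\log n)^2))$.

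The main obstacle is that $W_i$ is itself random, so the Chernoff step requires independence between the indicators $\mathbf{1}[v \in V']$ and the conditioning on $W_i$. I plan to handle this by the principle of deferred decisions, exposing the membership in $V$ in BFS layers from $U$ in $G$: at the start of step $i$ only the queries $\mathbf{1}[v \in V]$ for $v \in B^i_G(U) \setminus Z$ have been asked, which is enough to determine $W_i$, while the membership of every vertex of $V(G) \setminus B^i_G(U)$ remains fresh. \Cref{prop:stars} applied to $W_i$ produces leaves (or an $X$-set) in $N_G(W_i) \setminus W_i \subseteq B^{i+1}_G(U) \setminus W_i$; candidates already in $B^i_G(U) \setminus W_i$ contribute deterministically from the revealed history, while candidates in $B^{i+1}_G(U) \setminus B^i_G(U)$ carry fresh Bernoulli indicators. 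As in \cite{bucic2022erdos}, one argues that after accounting for both contributions \Cref{prop:stars} still yields $\Omega(|W_i|/(\log n)^2)$ new vertices in expectation, at the cost of a polylogarithmic loss in the concentration bound.

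A final union bound over the $\le (\log n)^4$ growth steps multiplies the per-step failure probability by $(\log n)^4$, and absorbing polylogarithmic slack into the exponent gives the claimed overall bound $\exp(-\Omega(|U|/(\log n)^{22}))$. The technical heart of the argument is the dependency bookkeeping described above; the structural and probabilistic inputs are standard.
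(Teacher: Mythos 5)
There is a genuine gap, and it sits exactly where you placed the ``technical heart'': the dependency bookkeeping. Your exposure scheme reveals the indicators $\mathbf{1}[v\in V]$ layer by layer along $B^i_G(U)$, and at step $i$ the candidate vertices produced by \Cref{prop:stars} (star leaves, or the set $X$) lie in $N_G(W_i)\subseteq B^{i+1}_G(U)$. The candidates inside $B^i_G(U)$ are already exposed, and conditionally on the history they may \emph{all} have turned out to lie outside $V$; the candidates outside $B^i_G(U)$ may be few or nonexistent. In particular, since $G$ is an expander its balls $B^i_G(U)$ can swallow all of $V(G)$ after far fewer steps than the $\Theta((\log n)^3)$ rounds you need to grow $W_i$ from size $\approx|U|/6$ to $|V|/2$ (the $G$-ball grows at least as fast as $W_i$, and possibly in $O(1)$ steps if $G$ is dense), after which there is no fresh randomness left at all. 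So the assertion that ``after accounting for both contributions \Cref{prop:stars} still yields $\Omega(|W_i|/(\log n)^2)$ new vertices in expectation'' is false for some conditioning histories, and the per-step bound cannot be salvaged by a union bound over steps; this is not what \cite{bucic2022erdos} do.

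The paper (following \cite{bucic2022erdos}) avoids this with multi-round sprinkling rather than deferred decisions: $V$ is written as a union of $(\log n)^4+2$ independent random sets $V_1,\ldots,V_\ell,V^*,V^{**}$ (each vertex taken with probability $p\approx 1/(100(\log n)^4)$ per round), and round $i+1$ uses only the coins of $V_{i+1}$, which are fresh at \emph{every} vertex regardless of what earlier rounds revealed. Two further structural choices make this work: the object grown is not the true ball $W_i$ but the relaxed set $B_i$ of vertices reachable from $U^*$ by paths in $G\setminus Z$ whose \emph{interiors} lie in $V_1\cup\ldots\cup V_i$ (endpoints need not be in $V$), and in each growth step the randomness is placed on the current set's side --- a leaf joins $B_{i+1}$ when its star \emph{centre} in $B_i$ is sampled into $V_{i+1}$ (with a Lipschitz concentration argument via \Cref{lem:concentration} in the bipartite case, which is where the exponent $(\log n)^{22}$ comes from). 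Only at the end does the last sprinkle $V^{**}$ convert a constant fraction of $B_\ell\ (\ge 2n/3$ vertices$)$ into genuine members of $V$, yielding $|B^{(\log n)^4}_{G[V']}(U\cap V')|>|V|/2$. If you want to keep your outline, you should replace the BFS-layer exposure by this sprinkling device; as written, the proposed proof does not go through.
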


		\begin{proof}
			Take $\ell := (\log n)^4$, and let $p$ satisfy $1 - (1 - p)^{\ell}(1 - \frac{3}{20})(1 - \frac{1}{120}) = \frac{1}{6}$, so that
			\begin{align*}
				1 - p\ell 
				\le (1 - p)^{\ell} 
				= \frac{1 - 1/6}{(1 - 3/20) \cdot (1 - 1/120)}
				\le \frac{99}{100},
			\end{align*}
			which implies that $p \ge \frac{1}{100 (\log n)^{4}}$.
			Let $V_1, \ldots, V_{\ell}, V^*, V^{**}$ be random sets, where $V_i$ is obtained by including each vertex with probability $p$, independently, $V^*$ is obtained by including each vertex with probability $\frac{1}{120}$, independently, and $V^{**}$ is obtained by including each vertex with probability $\frac{3}{20}$, independently. Notice that each vertex is in $V_1 \cup \ldots \cup V_{\ell} \cup V^* \cup V^{**}$ with probability $1/6$, so we may think of $V$ as the union $V_1 \cup \ldots \cup V_{\ell} \cup V^* \cup V^{**}$.

			Define $U^* = (U \cap V^*) \setminus Z$. Then, with probability $1 - \exp\left(-\Omega\left(|U|\right)\right)$, we have $|U^*| \ge \frac{1}{200}|U| - |Z| \ge \frac{1}{400}|U|$.
			Define $B_0 := U^*$ and, for $i \ge 1$, let $B_i$ be the set of vertices in $G$ that can be reached by a path in $G \setminus Z$ that starts in $U^*$, has length at most $i$, and its interior is in $V_1 \cup \ldots \cup V_i$. We emphasise that $B_i$ is only required to be disjoint from $Z$, and need not be a subset of $V_i$. Notice that $B_i \subseteq B_{i+1}$ for every $i \ge 0$, implying that $|B_i| \ge |B_0| = |U^*| \ge \frac{1}{400}|U|$ for $i \ge 0$ when $|U^*| \ge \frac{1}{400}|U|$.

			\begin{claim}\label{claim:Bi-expand}
				If $|B_i| \le \frac{2}{3}n$, then, with probability at least $1 - \exp\left(-\Omega\left(\frac{|U|}{(\log n)^{22}}\right)\right)$,
				\begin{equation*}
					\left|B_{i+1} \setminus B_i\right| \ge \frac{\eps |B_i|}{10^4(\log n)^2}.
				\end{equation*}
			\end{claim}
			\begin{proof}
				Notice that a vertex in $N(B_i) \setminus Z$ is in $B_{i+1} \setminus B_i$ if at least one of its neighbours in $B_i$ is sampled into $V_{i+1}$.
				We consider the two possible outcomes of \Cref{prop:stars} for $B_i$. 

				Suppose that the first outcome holds, so there are $\frac{|B_i|}{(\log n)^7}$ pairwise vertex-disjoint stars of size $(\log n)^9$ with centres in $B_i$ and leaves in $N(B_i)$. 
				By a Chernoff bound, with probability at least $1 - \exp\left(-\Omega\left(\frac{p|B_i|}{(\log n)^7}\right)\right) = 1 - \exp\left(-\Omega\left(\frac{|U|}{(\log n)^{11}}\right)\right)$, at least $\frac{p|B_i|}{2(\log n)^7}$ centres are added to $V_{i+1}$, implying that
				\begin{equation*}
					\left|B_{i+1} \setminus B_i\right| 
					\ge (\log n)^9 \cdot \frac{p|B_i|}{2(\log n)^7} - |Z|
					\ge \frac{|B_i|}{200(\log n)^2} - \frac{|U|}{(\log n)^3}
					\ge \frac{|B_i|}{400(\log n)^2}.
				\end{equation*}

				Now suppose that the second outcome holds, so there is a bipartite subgraph $H \subseteq G$ with parts $B_i$ and $X \subseteq V(G) \setminus B_i$, with $|X| \ge \frac{\eps |B_i|}{4(\log n)^2}$, such that vertices in $X$ have degree at least $d := (\log n)^4$ in $H$ while vertices in $B_i$ have degree at most $D := 2(\log n)^9$ in $H$.
				Let $Y$ be the set of vertices in $X$ that do not have an $H$-neighbour in $V_{i+1}$. Note that $\Ex |Y| \le |X| (1 - p)^d \le |X| e^{-pd} \le \frac{399}{400}|X|$. Note also that $|Y|$ is $D$-Lipschitz, since the outcome of the sampling of any single vertex in $B_i$ affects the outcome of at most $D$ vertices in $X$. Thus, by \Cref{lem:concentration},
				\begin{align*}
					\Pr\left(|Y| > \frac{799}{800}|X|\right) \le \Pr\left(|Y| > \Ex |Y| + \frac{|X|}{800}\right)
					& \le 2\exp\left(-\frac{|X|^2}{2 \cdot 800^2 \cdot D^2|B_i|}\right) \\
					& = \exp\left(-\Omega\left(\frac{|U|}{(\log n)^{22}}\right)\right).
				\end{align*}
				So, with probability at least $1 - \exp\left(-\Omega\left(\frac{|U|}{(\log n)^{22}}\right)\right)$, we have $|B_{i+1} \setminus B_i| \ge \frac{|X|}{800} - |Z| \ge \frac{\eps|B_i|}{10^4 (\log n)^2}$.
			\end{proof}

			Iterating the claim, with probability at least 
			\begin{equation*}
				1 - (\log n)^4 \cdot \exp\left(-\Omega\left(\frac{|U|}{(\log n)^{22}}\right)\right)
				\ge 1 - \exp\left(-\Omega\left(\frac{|U|}{(\log n)^{22}}\right)\right),
			\end{equation*}
			if $|B_i| \le \frac{2}{3}n$ and $i \le (\log n)^4$, then,
			\begin{equation*}
				|B_i| \ge \left(1 + \frac{\eps}{10^4 (\log n)^2}\right)^i |U|,
			\end{equation*}
			which implies $|B_{\ell}| \ge \frac{2}{3}n$ with probability $1 - \exp\left(-\Omega\left(\frac{|U|}{(\log n)^{22}}\right)\right)$.

			To complete the proof, note that any vertex in $B_{\ell}$ that gets sampled into $V^{**}$ (or that is already in $V_1 \cup \ldots \cup V_{\ell} \cup V^*$) is in the set $B' := B_{G[V']}^{(\log n)^4}(U \cap V')$. By Chernoff, with probability at least $1 - e^{-\Omega(n)}$, at least $\frac{99}{100}\cdot \frac{3}{20} \cdot \frac{2}{3}n = \frac{99}{1000}n$ vertices of $B_{\ell}$ are in $V$, and $|V| \le \frac{101}{100} \cdot \frac{1}{6} n = \frac{101}{600}n$. If all this holds, then $|B'| > \frac{1}{2}|V|$, as required.
		\end{proof}

		The next corollary, which is a variant of Lemma 19 in \cite{bucic2022erdos}, boosts the probability of expansion of $U$ into $V$, so as to be amenable to an application of the union bound, with the expense of a slightly smaller upper bound on $|Z|$.

		\begin{corollary}\label{cor:expansion-random}
			Let $2^{-9} \le \eps < 1$, $s \ge 2^{11}(\log n)^{50}$, and let $n$ be large. Suppose that $G$ is an $n$-vertex weak $(\eps, s)$-expander, and let $V$ be a random subset of $V(G)$, obtained by including each vertex independently with probability at least $1/6$. Then, with high probability, if $U, Z \subseteq V(G)$ are sets with $|Z| \le \frac{|U|}{(\log n)^{30}}$, then 
			\begin{equation} \label{eqn:NU-expand}
				\left|B^{(\log n)^4}_{G[V']}\left(N(U) \cap V'\right)\right| > \frac{|V|}{2},
			\end{equation}
			where $V' := V \setminus Z$.
		\end{corollary}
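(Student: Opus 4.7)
The plan is to upgrade \Cref{lem:expansion-random} from a statement about a single fixed pair $(U, Z)$ to a statement uniform over all admissible $(U, Z)$. Two features must be upgraded simultaneously: the seed of the ball is $N(U) \cap V'$ rather than $U \cap V'$, and the conclusion must hold for every $(U, Z)$ with $|Z| \le |U|/(\log n)^{30}$ rather than a single fixed pair.

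The first point is essentially bookkeeping. Since $G$ is a weak $(\eps, s)$-expander with $|U| \le 2n/3$, we have $|N(U)| \ge \eps|U|/(\log n)^2$, so the hypothesis $|Z| \le |U|/(\log n)^{30}$ implies $|Z| \le |N(U)|/(\log n)^{27}$, which easily satisfies the condition $|Z| \le |W|/(\log n)^3$ needed to apply \Cref{lem:expansion-random} with $W := N(U)$ in place of $U$. Moreover, for $|U| \ge (\log n)^{25}$ we have $|N(U)| \ge (\log n)^{23}$, so the size hypothesis of the lemma is also met. For smaller $|U|$ the condition $|Z| \le |U|/(\log n)^{30}$ forces $Z = \emptyset$; by monotonicity of the event in the seed, we can replace $N(U)$ by any superset of size $(\log n)^{23}$ inside $V(G)$ and reduce to the $Z = \emptyset$ case of the lemma.

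The second point is the core difficulty. \Cref{lem:expansion-random} as stated gives a per-pair failure probability of only $\exp\bigl(-\Omega(|U|/(\log n)^{22})\bigr)$, far too weak to absorb a union bound over the $2^{\Omega(n)}$ admissible pairs $(W, Z)$. This is where the substantially stronger robustness parameter in the hypothesis of the corollary, $s \ge 2^{11}(\log n)^{50}$ in place of $s \ge 8(\log n)^{13}$, comes in: it allows one to strengthen the star-based expansion step inside the proof of \Cref{lem:expansion-random}, via a version of \Cref{prop:stars} with much larger minimum star size and a much smaller upper bound on the degree of $W$-vertices, so that the Lipschitz constant in the McDiarmid step is polylogarithmically smaller than before. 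Iterating over the $(\log n)^4$ expansion rounds, this compounds to a per-pair failure probability of the form $\exp\bigl(-\Omega(|W| (\log n)^C)\bigr)$ for some positive constant $C$, at which point the union bound
\begin{equation*}
\sum_{k \ge (\log n)^{23}} \binom{n}{k} \binom{n}{k/(\log n)^{27}} \exp\bigl(-\Omega(k (\log n)^C)\bigr)
\end{equation*}
is easily $o(1)$.

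The main technical obstacle I expect is quantifying precisely how the larger $s$ improves the concentration step inside \Cref{lem:expansion-random}. The cleanest route is to rerun the proof of \Cref{lem:expansion-random} with a parameter $d = d(s)$ fed into \Cref{prop:strong-expansion} and \Cref{prop:stars} that grows polylogarithmically with $s$, and to track carefully how the ratios $|X|/(D^2|B_i|)$ in the bipartite case and $p|B_i|/(\log n)^7$ in the star case improve, since these are exactly the exponents that determine the per-round failure probability. This is a detail-heavy parameter chase but conceptually routine; once it is carried out, the corollary follows by union bound and the $V^{**}$-absorption step that concludes the proof of \Cref{lem:expansion-random}.
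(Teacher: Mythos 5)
Your reduction of the seed from $U \cap V'$ to $N(U)\cap V'$ is fine, but the core of your plan --- re-running \Cref{lem:expansion-random} with the larger $s$ to push the per-pair failure probability down to $\exp\bigl(-\Omega(|W|(\log n)^{C})\bigr)$ for some $C>0$, $W = N(U)$, and then union bounding over all admissible pairs --- cannot work. No strengthening of the concentration step can beat $\exp(-c|N(U)|)$: with probability $(5/6)^{|N(U)|}$ the random set $V$ contains no vertex of $N(U)$ at all, in which case the seed $N(U)\cap V'$ is empty and \eqref{eqn:NU-expand} fails outright. Since weak expansion only guarantees $|N(U)| \ge \eps|U|/(\log n)^2$, the best conceivable per-pair bound $\exp(-O(|N(U)|))$ is dwarfed by the $\exp(\Theta(|U|\log n))$ count of pairs $(U,Z)$, so a direct union bound over all pairs is hopeless regardless of $s$. (Structurally the same cap shows up inside the proof you want to modify: the disjoint stars have centres in $W$, so there are at most $|W|$ of them, and the per-round sampling probabilities are at most $1$ and in fact $O(1/(\log n)^{3})$ because weak expansion --- whose growth rate $\eps/(\log n)^2$ does not improve with $s$ --- forces $(\log n)^{3}$-many rounds; so the Chernoff/McDiarmid exponents can never reach $|W|(\log n)^{C}$.) A smaller slip: for tiny $U$ you propose to enlarge $N(U)$ to a superset of size $(\log n)^{23}$ "by monotonicity", but monotonicity runs the wrong way --- a large ball around a superset of the seed does not give a large ball around $N(U)\cap V'$ itself.

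The paper's actual use of the hypothesis $s \ge 2^{11}(\log n)^{50}$ is deterministic, not probabilistic, and this is the idea your proposal is missing. Call $U$ \emph{well-expanding} if $|N(U)| \ge |U|(\log n)^{24}$; for such $U$ (and any $Z$ with $|Z| \le |U|$) \Cref{lem:expansion-random} applied with seed $N(U)$ already gives failure probability $\exp\bigl(-\Omega(|U|(\log n)^{2})\bigr)$, which comfortably absorbs the $\binom{n}{u}^{2}$ enumeration --- so the union bound is taken only over well-expanding sets. For an arbitrary $U$ with $|Z|\le |U|/(\log n)^{30}$ one then argues deterministically via \Cref{prop:strong-expansion} with $d=\frac{2}{\eps}(\log n)^{26}$: either $|N(U)| \ge \frac{s|U|}{2d} \ge |U|(\log n)^{24}$, i.e.\ $U$ is itself well-expanding (this is exactly where the big $s$ enters), or there are many vertices with $\ge d$ neighbours in $U$, and an averaging argument produces a subset $U'\subseteq U$ of size $|U|/d \ge |Z|$ that is well-expanding, so \eqref{eqn:NU-expand} for $(U',Z)$ --- already secured by the union bound --- yields it for $(U,Z)$. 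Your proposal would need to be restructured along these lines; the "detail-heavy parameter chase" you describe has no workable endpoint.
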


		\begin{proof}
			Say that a set $U \subseteq V(G)$ \emph{expands well} if $|N(U)| \ge |U|(\log n)^{24}$. Given a (non-empty) set $U$ which expands well, and a set $Z$ with $|Z| \le |U|$, \Cref{lem:expansion-random} (applied with $U_{\ref{lem:expansion-random}} = N(U)$) tells us that \eqref{eqn:NU-expand} holds, with probability at least $1 - \exp\left(-\Omega\left(|U|(\log n)^{2}\right)\right)$.
			 
			By a union bound, we conclude that the probability that \eqref{eqn:NU-expand} fails for $(U, Z)$, with $U$ well-expanding and $|Z| \le |U|$, is at most
			\begin{align*}
				\sum_{u = 1}^n \binom{n}{u}^2 \exp\left(-\Omega\left(u(\log n)^{2}\right)\right)
				& \le \sum_{u = 1}^n \exp\left(2u \log n - \Omega\left(u(\log n)^2\right)\right) \\
				& = \sum_{u = 1}^n \exp\left(-\Omega\left(u (\log n)^2\right)\right) = o(1/n).
			\end{align*}
			For the rest of the proof we assume that \eqref{eqn:NU-expand} holds for all such pairs $(U, Z)$. 
			To complete the proof, we will deduce that \eqref{eqn:NU-expand} holds for all pairs $(U, Z)$, where $U$ need not expand well, $|U| \le \frac{2}{3}n$, and $|Z| \le \frac{|U|}{(\log n)^{30}}$. Fix such $U, Z$.

			Write $d := \frac{2}{\eps}(\log n)^{26}$. By \Cref{prop:strong-expansion}, either $|N(U)| \ge \frac{s|U|}{2d}$, or $\left|\{u : |N(u) \cap U| \ge d \} \right| \ge \frac{\eps |U|}{(\log n)^2}$. Notice that the first outcome implies that $U$ expands well and so we already know that \eqref{eqn:NU-expand} holds. We thus assume that the second one holds, and write $W := \{u : |N(u) \cap U| \ge d\}$. Let $U'$ be a subset of $U$ of size $\frac{|U|}{d}$, chosen uniformly at random. For a fixed $w \in W$, the probability that $w$ has no neighbours in $U'$ is at most
			\begin{equation*}
				\frac{\binom{|U|-d}{|U|/d}}{\binom{|U|}{|U|/d}} \le \left(\frac{|U|-d}{|U|}\right)^{|U|/d} \le e^{-1}.
			\end{equation*}
			It follows that $\Ex[|W \cap N(U')|] \ge (1 - e^{-1})|W| \ge \frac{\eps|U|}{2(\log n)^2} = |U'|(\log n)^{24}$. In particular, there is a subset $U' \subseteq U$ of size $\frac{|U|}{d}$ with $|N(U')| \ge |U'|(\log n)^{24}$. Since such $U'$ expands well, and $|Z| \le \frac{|U|}{(\log n)^{30}} \le |U'|$, Equation \eqref{eqn:NU-expand} holds for $(U', Z)$ and thus for $(U, Z)$.
		\end{proof}	
	
	\subsection{A path connection through a random set} \label{subsec:expansion-step-two}

		Our end goal is to be able to join (disjointly) $r$ pairs of vertices through a random set $V$, provided that they expand well. Here we show how to join one pair of vertices (while avoiding a small set of forbidden vertices). This is a variant of Proposition 8 in \cite{bucic2022erdos}, where in addition to forbidding vertices instead of edges, we also impose the expansion property on the vertices we wish to connect. The proof is essentially the same. 

		\begin{lemma} \label{lem:hall-condition}
			Let $2^{-9} \le \eps < 1$, $s \ge 2^{11}(\log n)^{50}$, and let $n$ be large. Suppose that $G$ is an $n$-vertex weak $(\eps, s)$-expander, and let $V$ be a random subset of $V(G)$, obtained by including each vertex independently with probability $1/6$.
			Then, with high probability, the following holds for every $r$: if $x_1, \ldots, x_r, y_1, \ldots, y_r$ are distinct vertices, satisfying $|N(X)| \ge |X|(\log n)^{50}$ for every subset $X \subseteq \{x_1, \ldots, x_r, y_1, \ldots, y_r\}$, and $Z$ is a set of size at most $2r(\log n)^{12}$ which is disjoint from $\{x_1, \ldots, x_r, y_1, \ldots, y_r\}$, then for some $i \in [r]$ there is an $(x_i, y_i)$-path in $G$ whose interior is in $V \setminus Z$ and whose length is at most $(\log n)^6$.
		\end{lemma}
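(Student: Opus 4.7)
The plan is to verify the Aharoni--Haxell Hall condition (Theorem~\ref{thm:hall}) for hypergraphs indexed by the $r$ pairs, which will in fact yield a rainbow system of disjoint $(x_i,y_i)$-paths, one per $i \in [r]$, strictly stronger than the lemma's conclusion. For each $i \in [r]$ let $\cH_i$ be the hypergraph on $V(G)$ whose edges are the interiors $V(P) \setminus \{x_i, y_i\}$ of $(x_i, y_i)$-paths $P$ of length at most $(\log n)^6$ whose interior lies in $V \setminus Z$; every edge has at most $(\log n)^6$ vertices, so we apply Theorem~\ref{thm:hall} with $s_{\mathrm{Hall}} := (\log n)^6$. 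If Hall's condition holds, the theorem produces a matching taking one edge from each $\cH_i$, and in particular every $\cH_i$ is non-empty, proving the lemma.

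First I condition on the high-probability event of Corollary~\ref{cor:expansion-random}. To verify Hall's condition, suppose for contradiction that it fails for some non-empty $I \subseteq [r]$, and write $t := |I|$. Let $\cM$ be a maximum matching in $\bigcup_{i \in I} \cH_i$; by assumption $|\cM| < (\log n)^6 (t-1)$, hence $|V(\cM)| < (\log n)^{12}(t-1)$. Setting
\[
  W := V(\cM) \cup Z \cup \{x_j, y_j : j \in [r]\},
\]
we obtain $|W| \le 4 r (\log n)^{12}$. By the maximality of $\cM$, no $(x_i, y_i)$-path with $i \in I$, of length at most $(\log n)^6$ and interior in $V \setminus W$, can exist; otherwise such a path would be disjoint from $V(\cM)$ and extend it.

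To derive a contradiction I would expand from $X_I := \{x_j : j \in I\}$ and $Y_I := \{y_j : j \in I\}$ inside $G[V \setminus W]$. The expansion hypothesis on subsets of the pair endpoints gives $|N(X_I)|, |N(Y_I)| \ge t(\log n)^{50}$. Provided that $t$ is not too small relative to $r$, this makes $|W \setminus N(X_I)| \le |N(X_I)|/(\log n)^{30}$, so Corollary~\ref{cor:expansion-random} applies with $U := N(X_I)$ (and symmetrically with $U := N(Y_I)$); the resulting balls of radius $(\log n)^4$ in $G[V \setminus W]$ each exceed $|V|/2$, hence intersect at some vertex $v$. This yields a path of length $O((\log n)^4)$ through $V \setminus W$ from some $x_i$ to some $y_j$ with $i, j \in I$. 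The regime when $t$ is very small relative to $r$ must be handled separately: the matching lower bound in Hall's condition is then correspondingly small, and a direct expansion from a single pair together with its neighbourhood supplies the contradiction.

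The main obstacle is forcing $i = j$ in the final path, since pigeonhole alone only delivers an arbitrary pair in $X_I \times Y_I$. The planned resolution is to impose a secondary minimisation on $\cM$ --- for instance, after fixing maximum size, also minimise an auxiliary potential counting off-diagonal obstructions --- so that an off-diagonal path can be locally swapped with an appropriate edge of $\cM$ to either produce a genuine $(x_i, y_i)$-path extending $\cM$ or strictly decrease the potential, in either case contradicting the choice of $\cM$. This swap step, adapted from the strategy of Buci\'c--Montgomery~\cite{bucic2022erdos}, is the technical crux of the proof.
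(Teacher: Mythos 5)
Your proposal does not follow the paper's route, and it has two genuine gaps. The paper proves this lemma directly, with no Hall-type argument: it fixes an outcome of $V$ satisfying \Cref{cor:expansion-random}, defines $R^d(X) := B^d_{G[V']}(N(X)\cap V')$, and shows via a halving argument (if the set $X_1$ of ``bad'' vertices among $x_1,\ldots,x_r$ with $|R^{\ell\log n}(x)|\le |V|/2$ had size at least $r/2$, one could pass to nested subsets $X_1\supseteq X_2\supseteq\cdots$, each expanding past $|V|/2$, down to a single vertex, a contradiction) that fewer than half of the $x_j$ and fewer than half of the $y_j$ are bad. Hence some index $i$ has \emph{both} $|R^{\ell\log n}(x_i)|,|R^{\ell\log n}(y_i)|>|V|/2$, and the two balls meet, giving an $(x_i,y_i)$-path with interior in $V\setminus Z$. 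This majority-plus-bisection device is precisely what solves the problem you flag as your ``main obstacle'' (forcing the same index on both ends), and your substitute --- a secondary minimisation of $\cM$ with an unspecified ``potential counting off-diagonal obstructions'' and a local swap --- is not an argument; nothing in the expansion machinery lets you convert an $(x_i,y_j)$-path with $i\neq j$ into an $(x_i,y_i)$-path, and no concrete potential or swap is exhibited. The Aharoni--Haxell theorem belongs to the \emph{next} step of the paper (\Cref{lem:connecting}), where the present lemma is the tool used to verify Hall's condition.

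The second gap is quantitative and sinks the Hall-based route even apart from the index issue. You are attempting a strictly stronger statement (disjoint paths for \emph{all} $i$, avoiding an adversarial external $Z$), so you must verify Hall's condition for every $I\subseteq[r]$, including small $I$. There your forbidden set is $W \supseteq Z$ of size up to about $4r(\log n)^{12}$, while the expansion you can invoke from $X_I$ is only $|N(X_I)|\ge t(\log n)^{50}$ with $t=|I|$. The hypothesis of \Cref{cor:expansion-random} requires $|W|\le |U|/(\log n)^{30}$, i.e.\ roughly $t\ge 4r/(\log n)^{8}$, so the corollary is inapplicable whenever $t$ is smaller than that, e.g.\ $t=2$ and $r$ polynomial in $n$. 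Your remark that this regime is ``handled separately by direct expansion from a single pair'' does not work: a single pair only guarantees a neighbourhood of size $(\log n)^{50}$, which can be far smaller than $(\log n)^{30}|Z|\approx r(\log n)^{42}$. This mismatch is exactly why the paper's \Cref{lem:connecting} applies the present lemma with $r$ replaced by $|I|$ and with $Z=V(\cM')$ of size at most $|I|(\log n)^{12}$ --- proportional to $|I|$, not to the total number of pairs --- and why the present lemma is proved with sets of size at least $r/2$ only, never with small subsets.
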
		

		\begin{proof}
			Fix an outcome of $V$ such that the following holds, where $V' = V \setminus Z$. 
			\begin{itemize}
				\item
					Equation \eqref{eqn:NU-expand} holds for every disjoint $U, Z \subseteq V(G)$ with $|Z| \le \frac{|U|}{(\log n)^{30}}$,
				\item
					$|V| \ge \frac{n}{8}$,
				\item
					$|N(X) \cap V'| \ge |X| (\log n)^{49}$ for every $X \subseteq \{x_1, \ldots, x_r, y_1, \ldots, y_r\}$ of size at least $r/2$.
			\end{itemize}
			By \Cref{cor:expansion-random}, a Chernoff bound and the assumption on $\{x_1, \ldots, x_r, y_1, \ldots, y_r\}$, these three assumptions hold with high probability, so we are justified in making them.
			Write $\ell := (\log n)^4+1$.
			For a set of vertices $X$ and integer $d \ge 1$, define $R^d(X) := B^d_{G[V']}(N(X) \cap V')$.

			Let $X_1$ be the set of vertices $x$ in $\{x_1, \ldots, x_r\}$ satisfying $|R^{\ell \log n}(x)| \le \frac{|V|}{2}$. 
			\begin{claim}
				$|X_1| < \frac{r}{2}$.
			\end{claim}
			\begin{proof}
				Suppose $|X_1| \ge \frac{r}{2}$. We will show that there is a sequence $(X_i)_{i \ge 1}$, such that $X_{i+1} \subseteq X_{i}$, $|X_{i+1}| \le \max\{1, \frac{|X_i|}{2}\}$, and 
				\begin{equation} \label{eqn:R}
					\left|R^{i\ell}(X_i)\right| > \frac{|V|}{2}, 
				\end{equation}
				for $i \ge 1$. 

				Writing $U := N(X_1) \cap V'$, notice that $|U| \ge \frac{r}{2}(\log n)^{49}$, using $|X_1| \ge \frac{r}{2}$ and the third item above. Thus $|U| \ge |Z| (\log n)^{30}$, and, by \eqref{eqn:NU-expand}, 
				\begin{equation*}
					\left|B^{(\log n)^4}_{G[V']}\big(N(U) \cap V'\big)\right| > \frac{|V|}{2}. 
				\end{equation*}
				As $B^{(\log n)^4}_{G[V']}\big(N(U) \cap V'\big) \subseteq R^{\ell}(X_1)$, it follows that $|R^{\ell}(X_1)| > \frac{|V|}{2}$, proving \eqref{eqn:R} for $i = 1$.	

				Now suppose that $X_1 \supseteq \ldots \supseteq X_j$ and \eqref{eqn:R} holds for $i \in [j]$. If $|X_i| = 1$ we take $X_{i+1} = X_i$ (which clearly satisfies the requirements). Otherwise, by dividing $X_i$ into at most three sets of size at most $\frac{|X_i|}{2}$, there is a subset $X_{i+1} \subseteq X_i$ of size at most $\frac{|X_i|}{2}$ satisfying $|R^{i\ell}(X_{i+1})| \ge \frac{|V|}{6}$. Consider the set $U := R^{i\ell}(X_{i+1})$. It satisfies $|U| \ge \frac{|V|}{6} \ge \frac{n}{48} \ge \frac{r}{48}(\log n)^{50} \ge |Z|(\log n)^{30}$, where the third inequality follows implicitly from the assumption that $|N(\{x_1, \ldots, x_r\})| \ge r(\log n)^{50}$. Thus, by \eqref{eqn:NU-expand},
				\begin{equation*}
					\left|B_{G[V']}^{(\log n)^4}\left(N(U) \cap V'\right)\right|
					> \frac{|V|}{2}.
				\end{equation*}
				Notice that
				\begin{equation*}
					B^{(\log n)^4}_{G[V']}\big(N(U) \cap V'\big) 
					\subseteq B_{G[V']}^{\ell}(U)
					= R^{(i+1) \ell}(X_{i+1}),
				\end{equation*}
				so $X_{i+1}$ has the required properties.
				This completes the proof of the existence of a sequence $(X_i)_{i \ge 1}$ with the above properties. 

				Fix such a sequence, and take $i := \log n$. Then $|X_i| \le \max\{1, 2^{-\log n}|X_1|\} = 1$. This means $|R^{\ell \log n}(x)| > \frac{|V|}{2}$ for the single vertex $x$ in $X_i$, contradicting the choice of $X_1$.
			\end{proof}
			Take $Y_1$ to be the set of vertices $y \in \{y_1, \ldots, y_r\}$ with $|R^{\ell \log n}(y)| \le \frac{|V|}{2}$. Then, analogously to the above claim, $|Y_1| < \frac{r}{2}$.
			Hence, there exists $i \in [r]$ such that $|R^{\ell \log n}(x_i)|, |R^{\ell \log n}(y_i)| > \frac{|V|}{2}$, showing that there is an $(x,y)$-path of length at most $2\ell \log n + 2$, with interior in $V'$, as required.
		\end{proof}
	
	\subsection{Many disjoint path connections through a random set} \label{subsec:expansion-step-three}

		Finally, we prove that given a sufficiently robust weak expander and pairs $(x_1, y_1), \ldots, (x_r, y_r)$, such that the set $\{x_1, \ldots, x_r, y_1, \ldots, y_r\}$ expands well, the pairs can be connected via vertex-disjoint paths through a random set $V$. This is a variant of Lemma 10 in \cite{bucic2022erdos} (though their statement looks quite different).

		\begin{lemma} \label{lem:connecting}
			Let $2^{-9} \le \eps < 1$, $s \ge 2^{11}(\log n)^{50}$, and let $n$ be large. Suppose that $G$ is an $n$-vertex weak $(\eps, s)$-expander, and let $V$ be a random subset of $V(G)$, that includes each vertex independently with probability $\frac{1}{6}$.
			Then, with high probability, for every sequence of distinct vertices $x_1, \ldots, x_r, y_1, \ldots, y_r$ satisfying $|N(X)| \ge |X|(\log n)^{50}$ for every $X \subseteq \{x_1, \ldots, x_r, y_1, \ldots, y_r\}$, there is a sequence of paths $P_1, \ldots, P_r$, whose interiors are pairwise vertex-disjoint and in $V$, such that $P_i$ is a path from $x_i$ to $y_i$.
		\end{lemma}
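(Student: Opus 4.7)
The plan is to reduce to Lemma~\ref{lem:hall-condition} via the Aharoni--Haxell hypergraph Hall theorem (\Cref{thm:hall}). First I would fix an outcome of $V$ on which the conclusion of \Cref{lem:hall-condition} holds; this happens with high probability, and afterwards the argument is deterministic and applies uniformly for every $r$ and every configuration of endpoints meeting the expansion hypothesis.

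For each $i \in [r]$, let $\cH_i$ be the hypergraph on $V(G)$ whose edges are the sets $V(P) \setminus \{x_i, y_i\}$, where $P$ ranges over $(x_i, y_i)$-paths in $G$ of length at most $(\log n)^6$ with $V(P) \setminus \{x_i, y_i\} \subseteq V$. Each edge of $\cH_i$ has at most $(\log n)^6$ vertices. I would apply \Cref{thm:hall} with parameter $s_{\ref{thm:hall}} = (\log n)^6$: a matching of size $r$ picking one edge from each $\cH_i$ corresponds precisely to paths $P_1, \ldots, P_r$ with $P_i$ a short $(x_i, y_i)$-path whose interior is in $V$ and with interiors pairwise vertex-disjoint, which is exactly the required conclusion.

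The Hall condition is verified as follows. Suppose for contradiction that some $I \subseteq [r]$ with $|I| \ge 2$ violates it, and let $M$ be a maximum matching in $\bigcup_{i \in I} \cH_i$, so $|M| < (\log n)^6 (|I|-1)$. Set $Z := \bigl( \bigcup_{e \in M} e \bigr) \setminus \{x_i, y_i : i \in I\}$; then $|Z| \le (\log n)^{12}(|I|-1) \le 2|I|(\log n)^{12}$, and by construction $Z$ is disjoint from $\{x_i, y_i : i \in I\}$. Since the expansion condition on $\{x_1, \ldots, x_r, y_1, \ldots, y_r\}$ restricts to the subset indexed by $I$, \Cref{lem:hall-condition} applied to these endpoints and this $Z$ yields some $i \in I$ and an $(x_i, y_i)$-path of length at most $(\log n)^6$ whose interior is contained in $V \setminus Z$. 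The interior of this path is an edge of $\cH_i$ disjoint from every edge of $M$, contradicting the maximality of $M$.

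The substantive work has already been done in Lemma~\ref{lem:hall-condition}; this final step is essentially a mechanical reduction through the Aharoni--Haxell matching theorem. The only real care needed is in the bookkeeping: bounding $|Z|$ correctly from the trivial product of the number of edges in $M$ and the per-edge size bound, and trimming off the endpoint set so that $Z$ fits the hypothesis of \Cref{lem:hall-condition} as stated.
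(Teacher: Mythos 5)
Your reduction is the same as the paper's: fix an outcome of $V$ for which \Cref{lem:hall-condition} holds, let $\cH_i$ consist of the interiors of short $(x_i,y_i)$-paths with interior in $V$, and verify the hypothesis of \Cref{thm:hall} by taking a maximal (maximum) matching in $\bigcup_{i\in I}\cH_i$ and feeding its vertex set into \Cref{lem:hall-condition} as the forbidden set $Z$. The quantitative bookkeeping ($s_{\ref{thm:hall}}=(\log n)^6$, $|Z|\le(\log n)^{12}(|I|-1)\le 2|I|(\log n)^{12}$, restricting the expansion hypothesis to the pairs indexed by $I$) matches the paper's.

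There is, however, one step that does not quite close as you wrote it. You trim the endpoints out of $Z$, setting $Z:=\bigl(\bigcup_{e\in M}e\bigr)\setminus\{x_j,y_j\}$, so that $Z$ satisfies the disjointness hypothesis of \Cref{lem:hall-condition}; but the path that lemma returns is only guaranteed to have its interior in $V\setminus Z$. Since the lemma does not assume the endpoints avoid $V$, some $x_j$ or $y_j$ may lie in $V$, may appear in an edge of $M$ (interiors of $(x_i,y_i)$-paths can pass through other endpoints), and may also appear in the interior of the new path. In that case the new hyperedge is \emph{not} disjoint from every edge of $M$, and the contradiction with maximality fails. The paper avoids this by taking $Z:=V(\cM')$, the full vertex set of the maximal matching, which makes the extension step immediate; the corner case then resurfaces only as a harmless looseness in invoking \Cref{lem:hall-condition} (whose proof never actually uses that $Z$ avoids the endpoints, and in the application, \Cref{lem:sep-random-set}, the endpoints lie outside $V$ anyway). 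So your argument needs one more sentence to handle endpoints falling in $V$ --- e.g.\ take $Z=V(\cM')$ and note that the disjointness hypothesis of \Cref{lem:hall-condition} is not genuinely needed, or first observe that one may restrict to interiors avoiding the endpoint set; as written, the maximality contradiction has a genuine (if small and fixable) hole.
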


		\begin{proof}
			We assume that the conclusion of \Cref{lem:hall-condition} holds (it holds with high probability, so we are justified in making this assumption).
			Let $\cH_i$ be the hypergraph on vertex set $V$ with edges the interiors of $P$, for all $(x_i, y_i)$-paths $P$ of length at most $\ell := (\log n)^6$ with interior in $V$.
			
			We will use \Cref{thm:hall}. Fix a subset $I \subseteq [r]$. We wish to show that there is a matching of size $\ell(|I|-1)+1$ in $\cH' := \bigcup_{i \in I} \cH_i$. Suppose no such matching exist, and let $\cM'$ be a maximal matching in $\cH'$. Then $|\cM'| \le \ell(|I|-1)$ and every edge in $\cH'$ intersects $Z := V(\cM')$, a set of size at most $\ell^2(|I|-1) \le |I|(\log n)^{12}$. The conclusion of \Cref{lem:hall-condition} tells us that for some $i \in I$ there is an $(x_i, y_i)$-path $P$ of length at most $\ell$ whose interior is in $V \setminus Z$. But this means that $V(P) \setminus \{x_i, y_i\}$ is an edge in $\cH'$ that does not intersect $Z$, a contradiction.

			So, the assumptions in \Cref{thm:hall} are satisfied, showing that there is a matching $\cM$ of size $r$ in $\bigcup_{i \in [r]}\cH_i$, whose $i$-th edge is in $\cH_i$. Let $P_i$ be the path corresponding to the $i$-th edge in $\cM$. Then $P_i$ is an $(x_i, y_i)$-path with interior in $V$, and the interiors of the $P_i$'s are pairwise vertex-disjoint, proving the lemma.
		\end{proof}

\section{Proof of \Cref{lem:reduce-large-deg}: separating dense expanders} \label{sec:dense-expanders}

	In this section we prove the first key lemma, \Cref{lem:reduce-large-deg}, which shows that for any graph $G$, there is a linear (i.e.\ of size $O(|G|)$) path system that separates a subgraph of $G$, whose complement in $G$ has polylogarithmic average degree. 
	This will follow quite easily from \Cref{lem:decompose-expanders}, that decomposes graphs into expanders, and \Cref{cor:sep-dense-expander} that shows that robust weak expanders can be separated by a linear path system. 

	To prove the latter corollary, given a robust weak expander $G$, we will take a random partition $\{V_1, V_2, V_3\}$ of $V(G)$ and show that, with high probability, $G - V_i$ can be separated using a linear path system, for $i \in [3]$. Taking the union of these collections of paths, as well as a decomposition of $G - V_i$ into a linear number of paths, we obtain the desired linear separating paths system for $G$. 

	It thus suffices to prove that, with high probability, if $V$ is a random set, obtained by including each vertex independently with probability $1/3$, then $G \setminus V$ has a linear separating path system. This is done in the following lemma, whose proof proceeds as follows.
	First, we decompose $G - V$ into at most $n$ paths $\cP$ (using \Cref{thm:lovasz}), and then we show that $G - V$ can be further decomposed into $O(n)$ matchings $M$, each of which intersects each path in $\cP$ in at most one edge, and moreover each matching satisfies a certain degree condition.
	Next, we take a random partition $\{V_1, V_2\}$ of $V$, taken uniformly at random. We first extend each $M$ through $V_1$ to a path forest $F$, in such a way that any set of leaves of $F$ expands in $G$, and thus in $V$. This is exactly the property needed for an application of \Cref{lem:connecting}, so we can use it to connect the leaves of $F$ via paths through $V$, vertex-disjointly. We thus get a path $P_M$ that extends $M$ and contains no other edges from $G \setminus V$, and so $\cP \cup \{P_M : M \in \cM\}$ is a linear collection of paths separating $G \setminus V$.

	\begin{lemma} \label{lem:sep-random-set}
		Let $2^{-9} \le \eps < 1$, $s \ge 2^{11} (\log n)^{51}$, and let $n$ be large.
		Suppose that $G$ is a weak $(\eps, s)$-expander, and let $V$ be a random subset of $V(G)$, that includes each vertex independently with probability $\frac{1}{3}$. Then, with high probability, there is a collection of $211n$ paths that separates $G \setminus V$. 
	\end{lemma}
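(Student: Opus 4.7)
The plan is to carry out the general strategy from \Cref{subsec:strategy} in our weak expander, using the random set $V$ to perform the path connections that turn matchings into single paths. I will view $V$ as the disjoint union $V_1 \cup V_2$ obtained by partitioning it uniformly at random, so each of $V_1, V_2$ is distributed as a random subset of $V(G)$ with inclusion probability $1/6$. I will condition on the (high-probability) events that the conclusion of \Cref{lem:connecting} holds for $V_2$ (so any family of pairs satisfying its expansion hypothesis can be joined via internally-vertex-disjoint paths through $V_2$), that a similar favourable event (such as the conclusion of \Cref{cor:expansion-random}) holds for $V_1$, and that $|V_1|$ and $|V_2|$ are close to $n/6$ by Chernoff.

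By \Cref{thm:lovasz}, $G \setminus V$ decomposes into a collection $\cP$ of at most $n$ paths; for $e \in E(G \setminus V)$, write $P(e)$ for the unique path in $\cP$ containing $e$. Following the maximality argument from \Cref{subsec:strategy}, I produce a collection $\cM$ of pairwise edge-disjoint matchings in $G \setminus V$ that covers every edge, has size $|\cM| \le 210 n$, and satisfies both (i) each $M \in \cM$ meets each $P \in \cP$ in at most one edge, and (ii) an additional ``bounded local degree'' condition on $M$ (for instance, that no vertex is incident to too many edges of $M$ together with $\bigcup_{e \in M} P(e)$). Condition (ii) is the strengthening that distinguishes the expander setting from \Cref{subsec:strategy} and is what will allow the endpoints of $M$ to be extended to expansion-friendly ``tips'' in the next step; the extra slack of a constant factor in the count is what absorbs this strengthening.

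For each $M \in \cM$ I then build a path $P_M$ that contains $M$ and uses no other edges of $\bigcup_{e \in M} P(e)$, as follows. Write $X_M := \bigcup_{e \in M} V(P(e)) \setminus V(M)$, and note $V \cap X_M = \emptyset$ since $X_M \subseteq V(G - V)$. Using $V_1$ and the robust expansion of $G$, greedily grow a short path in $G[\{v\} \cup V_1]$ from each of the $2|M|$ endpoints $v$ of $M$, extending $M$ into a path forest $F_M \subseteq G[V(M) \cup V_1]$ whose set of leaves $T_M$ satisfies $|N_G(X)| \ge |X| (\log n)^{50}$ for every $X \subseteq T_M$. Having arranged this expansion property, I apply \Cref{lem:connecting} to the pairs of leaves in $T_M$ corresponding to the components of $F_M$, joining them by internally-vertex-disjoint paths through $V_2$ and consolidating $F_M$ into a single path $P_M$. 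By construction $V(P_M) \cap X_M = \emptyset$, so $P_M$ extends $M$ and avoids all other edges of $\bigcup_{e \in M} P(e)$. The family $\cP \cup \{P_M : M \in \cM\}$ then has at most $n + 210 n = 211 n$ paths, and the verification that it separates $E(G \setminus V)$ is identical to \Cref{subsec:strategy}: given distinct $e, f \in E(G \setminus V)$, either $P(e)$ separates them, or $P(e) = P(f)$ and the path $P_{M(e)}$ for any $M(e) \in \cM$ containing $e$ does.

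The main obstacle will be the construction of the tips $T_M$: achieving the strong expansion condition $|N_G(X)| \ge |X| (\log n)^{50}$ for every $X \subseteq T_M$, when one only has $|V_1| \approx n/6$ vertices to work with, is constrained to short extensions, and must keep the extensions starting at different endpoints of $M$ vertex-disjoint. I expect the solution to combine the robust expansion of $G$ (invoked through \Cref{prop:strong-expansion} to pass from weak to strong expansion into sets of high-degree-into-$U$ vertices), the random structure of $V_1$ (leveraged through an analogue of \Cref{cor:expansion-random}), and the bounded-local-degree condition on $M$ imposed in Step 3, which is precisely what prevents too many extensions from competing for the same scarce vertices of $V_1$.
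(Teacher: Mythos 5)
Your overall architecture matches the paper's proof exactly (split $V$ uniformly into $V_1\cup V_2$ so each behaves like a $1/6$-random set, decompose $G\setminus V$ into at most $n$ paths by \Cref{thm:lovasz}, build $210n$ edge-disjoint matchings meeting each path in at most one edge, use $V_1$ to turn each matching into a path forest whose leaves satisfy the hypothesis of \Cref{lem:connecting}, then connect through $V_2$ and count $n+210n=211n$). However, the step you yourself flag as the ``main obstacle'' is precisely the core of the argument, and the way you propose to handle it does not work as stated, so there is a genuine gap. Growing short tips from each endpoint of $M$ into $V_1$ cannot by itself force the joint condition $|N_G(X)|\ge |X|(\log n)^{50}$ for \emph{every} subset $X$ of leaves: the matching may have up to $n/2$ edges whose endpoints (or whose tips) have heavily overlapping neighbourhoods, and minimum degree only guarantees $\delta(G)\ge s\approx 2^{11}(\log n)^{51}$, so a large cluster of leaves sharing essentially one neighbourhood violates the condition no matter how you extend them. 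Relatedly, the auxiliary condition you impose on the matchings (a bounded local degree of $M\cup\bigcup_{e\in M}P(e)$) is not the right strengthening and does not feed into the expansion requirement.

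The paper's mechanism is a merge-or-expand dichotomy rather than a tip-growing argument. One defines $\bd(e):=\min\{d_G(x),d_G(y)\}$ for $e=xy$ and builds the matchings so that, for every $d$, each $M\in\cM$ has at most $d/50$ edges with $\bd(e)\le d$; ordering $M=\{e_1,\dots,e_{|M|}\}$ by increasing $\bd$ then gives $\bd(e_i)\ge 50i$. Processing the edges in this order, whenever a current leaf of $e_i$ has a common $V_1$-neighbour with a leaf of a \emph{different} component, the two components are merged through that common vertex; no attempt is made to make such leaves expand. A leaf that survives to the end therefore has at most $2i$ vertices of $V_1$ in common with the neighbourhoods of the later surviving leaves (only the $\le 2i$ connector vertices already used could have served), while a Chernoff-type property of $V_1$ (if $|N(U)|\ge|U|(\log n)^2$ then $|N(U)\cap V_1|\ge\frac1{10}|N(U)|$) together with $\bd(e_i)\ge\max\{50i,(\log n)^{51}\}$ gives each surviving leaf at least $2i+2(\log n)^{50}$ neighbours in $V_1$. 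Summing these two facts over any subset $X$ of surviving leaves yields $|N(X)\cap V_1|\ge 2|X|(\log n)^{50}$, which is exactly the hypothesis needed for \Cref{lem:connecting} applied to $V_2$. Without this (or an equivalent) mechanism, your construction of the leaf set $T_M$ is an unproven assertion, and the rest of the proof cannot be completed.
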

	
	\begin{proof}
		Let $\{V_1, V_2\}$ be a partition of $V$, chosen uniformly at random. We will make the following assumptions.
		\begin{enumerate}[label = \rm(\alph*)]
			\item \label{itm:V1}
				if $|N(U)| \ge |U|(\log n)^2$ then $|N(U) \cap V_1| \ge \frac{1}{10}|N(U)|$, for every set of vertices $U$,
			\item \label{itm:V2}
				the conclusion of \Cref{lem:connecting} holds for $V_2$.
		\end{enumerate}
		Noting that each vertex is included in $V_i$ with probability $\frac{1}{6}$, independently, a Chernoff bound and \Cref{lem:connecting} allow us to make these assumptions.

		Write $G' := G \setminus V$.
		Let $\cP$ be a collection of at most $n$ paths that decomposes $G'$. For an edge $e = xy$ in $G'$, let $P(e)$ be the unique path in $\cP$ that contains $e$, and define $\bd(e) := \min\{d_G(x), d_G(y)\}$.
		Notice that, by expansion, $\delta(G) \ge s \ge (\log n)^{51}$ and so $\bd(e) \ge (\log n)^{51}$ for every edge $e$ in $G$.

		\begin{claim}
			There is a collection $\cM$ of $210n$ pairwise edge-disjoint matchings in $G'$ that covers all edges in $G'$ and every $M \in \cM$ satisfies: every path in $\cP$ intersects at most one edge in $M$; and for every $d$ there are at most $\frac{d}{50}$ edges $e$ in $M$ with $\bd(e) \le d$.
		\end{claim}

		\begin{proof}
			Let $\cM$ be a collection of $210n$ pairwise edge-disjoint matchings such that every $M \in \cM$ satisfies the following properties,
			\begin{itemize}
				\item
					every path in $\cP$ contains at most one edge from $M$,
				\item
					for every integer $r \ge 1$, there are at most $\frac{1}{200}2^r$ edges $e$ in $M$ with $\bd(e) \in [2^{r-1}, 2^r)$,
			\end{itemize}
			and $\cM$ covers a maximum number of edges in $G'$ under these conditions.
			We will show that the matchings in $\cM$ together cover all edges of $G'$.
			Suppose not, and take $e$ to be an edge in $G'$ such that $e \notin \bigcup \cM$. Let $r$ be the integer for which $\bd(e) \in [2^{r-1}, 2^r)$.

			There are at most $n$ matchings in $\cM$ containing an edge of $P(e)$, and at most $2n$ matchings containing an edge that intersects $e$. 
			Notice that $G$ has at most $2^rn$ edges touching vertices of degree at most $2^r$. In particular, $G'$ has at most $2^r n$ edges $f$ with $\bd(f) \in [2^{r-1}, 2^r)$, so the number of matchings $M \in \cM$ that contain at least $\frac{1}{200}2^r - 1$ such edges is at most
			\begin{equation*}
				\frac{2^r n}{\frac{1}{200}2^r - 1}
				= 200 n + \frac{40000 n}{2^r - 200} \le 201 n, 
			\end{equation*}
			using that $r \ge \log(\bd(e)) - 1 \ge 51 \log \log n - 1$ and that $n$ is large. 
			Hence, there exists $M \in \cM$ that has no edges in $P(e)$ or edges that intersect $e$, and has at most $\frac{1}{200}2^r - 1$ edges $f$ with $\bd(f) \in [2^{r-1}, 2^r)$. Replacing $M$ with $M \cup \{e\}$, we reach a contradiction to the maximality of $\cM$, showing that $\cM$ covers all edges in $G'$.

			It remains to show that there are at most $\frac{d}{50}$ edges $e$ in $M$ with $\bd(e) \le d$, for every $d \ge 1$ and $M \in \cM$.
			To see this, fix $M$ and $d$, and let $r$ be the integer such that $d \in [2^{r-1}, 2^r)$. Then, there are at most $\frac{1}{200}(1 + 2 + \ldots + 2^r) \le \frac{1}{100}2^r \le \frac{d}{50}$ edges $e$ in $M$ with $\bd(e) \le 2^r$, as required.
		\end{proof}
		Let $\cM$ be as in the above claim. For an edge $e$ in $G'$, denote by $M(e)$ the unique matching in $\cM$ containing $e$.
		Fix $M \in \cM$. We will show that there is a path $P_M$ that contains $E(M)$ and its edges which are not in $M$ have at least one end in $V = V_1 \cup V_2$.
		Write $M = \{e_1, \ldots, e_{|M|}\}$, where $\bd(e_1) \le \ldots \le \bd(e_{|M|})$.
		We will define a sequence $F_0, \ldots, F_{|M|}$ of path forests, as follows. Take $F_0 := M$. Having defined $F_{i-1}$, if a vertex $v$ in $e_i$ is a leaf in $F_{i-1}$ and has a common neighbour $u$ in $V_1$ with a leaf $v'$ of another component in $F_{i-1}$, add the edges $uv, uv'$ to $F_{i-1}$ (thereby obtaining a new path forest which has one component fewer). Repeat this with the other vertex in $e_i$, and denote the resulting graph by $F_i$. 
		Write $F := F_{|M|}$, denote the components of $F$ by $P_1, \ldots, P_r$, and let $x_i, y_i$ be the ends of $P_i$.
		\begin{claim}
			Every $X \subseteq \{x_1, \ldots, x_r\}$ satisfies $|N(X)| \ge 2|X| (\log n)^{50}$.
		\end{claim}
		\begin{proof}
			Let $i_j$ be such that $x_j \in e_{i_j}$, for $j \in [r]$, and suppose that $i_1 < \ldots < i_r$.
			We claim that $\left|N(x_j) \cap (N(x_{j+1}) \cup \ldots \cup N(x_r)) \cap V_1\right| \le 2 i_j$, for $j \in [r]$.
			Indeed, the forest $F_{i_j}$ uses at most $2i_j$ vertices from $V_1$. Because $x_j$ remains a leaf in $F_{i_j}$, the vertices $x_j$ and $x_{\ell}$, for $\ell \in [j+1, r]$, have no common neighbours in $V_1 \setminus V(F_{i_j})$, showing that $x_j$ has at most $2i_j$ neighbours in $V_1$ that are also neighbours of a vertex in $\{x_{j+1}, \ldots, x_r\}$, as claimed.

			We additionally claim that, if $v$ is a leaf of $F$ and a vertex in $e_i$, then $|N(v) \cap V_1| \ge 2i + 2(\log n)^{50}$.
			Recall that $M$ has at most $\frac{d}{50}$ edges $e$ with $\bd(e) \le d$, for every $d$. Since there are at least $i$ edges $e$ in $M$ with $\bd(e) \le \bd(e_i)$, it follows that $\bd(e_i) \ge 50i$. Recalling Assumption \ref{itm:V1} and that $\bd(e_i) \ge (\log n)^{51}$, we find that $|N(v) \cap V_1| \ge \frac{1}{10} |N(v)| \ge \frac{1}{10}\bd(e_i) \ge \max\{5i, (\log n)^{51}\} \ge 2i + 2(\log n)^{50}$.

			Writing $X = \{x_{i_1}, \ldots, x_{i_{\rho}}\}$, where $i_1 < \ldots < i_{\rho}$, we have
			\begin{align*}
				|N(X)| \ge \left|N(X) \cap V_1\right| 
				& \ge \sum_{j \in [\rho]} \left(|N(x_{i_j}) \cap V_1| - \left|N(x_{i_j}) \cap \left(N(x_{i_{j+1}}) \cup \ldots \cup N(x_{i_\rho})\right) \cap V_1\right|\right) \\
				& \ge \sum_{j \in [\rho]} \left(2i_j + 2(\log n)^{50} - 2i_j\right) 
				= \rho \cdot 2(\log n)^{50} = 2|X| (\log n)^{50}, 
			\end{align*}
			proving the claim.
		\end{proof}
		An analogous argument shows that $|N(Y)| \ge 2|Y| (\log n)^{50}$ for every $Y \subseteq \{y_1, \ldots, y_r\}$. Thus, if $X \subseteq \{x_1, \ldots, x_r, y_1, \ldots, y_r\}$, without loss of generality the set $X' := X \cap \{x_1, \ldots, x_r\}$ has size at least $\frac{1}{2}|X|$, and then by the claim $|N(X)| \ge |N(X')| \ge 2|X'| (\log n)^{50} \ge |X| (\log n)^{50}$. Assumption \ref{itm:V2} thus shows that there exist paths $Q_1, \ldots, Q_{r-1}$, where $Q_i$ has ends $y_i, x_{i+1}$, and the interiors of the $Q_i$'s are pairwise vertex-disjoint and contained in $V_2$. Because $F$ was chosen so as not to contain vertices from $V_2$, the union $Q_1 \cup \ldots \cup Q_{r-1} \cup F$ is a path $P_M$ that contains $M$ and no other edges from $G'$.

		Take $\cQ := \{P_M : M \in \cM\}$. We claim that $\cP \cup \cQ$ separate the edges of $G'$.
		Indeed, given edges $e, f$ in $G'$, it is easy to see that one of $P(e)$ and $P_{M(e)}$ contains $e$ but not $f$.

		To summarise $\cP \cup \cQ$ is a collection of at most $211n$ paths that separates the edges $e$ of $G'$, as required.
	\end{proof}

	The following easy corollary of \Cref{lem:sep-random-set} shows that weak robust expanders have linear separating path systems.

	\begin{corollary} \label{cor:sep-dense-expander}
		Let $2^{-9} \le \eps < 1$ and $s \ge (\log n)^{51}$.
		Suppose that $G$ is a weak $(\eps, s)$-expander. Then there is a collection $\cP$ of at most $636 n$ paths that separates the edges in $G$.
	\end{corollary}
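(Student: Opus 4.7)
The plan is to reduce the corollary to three applications of \Cref{lem:sep-random-set} via a random $3$-partition of $V(G)$. Specifically, I would choose a uniformly random partition $V(G) = V_1 \sqcup V_2 \sqcup V_3$, so each vertex lands in each $V_i$ independently with probability $1/3$. For each fixed $i \in [3]$, the marginal distribution of $V_i$ is exactly the one assumed in \Cref{lem:sep-random-set}, so with high probability there is a collection $\cP_i$ of at most $211 n$ paths separating the edges of $G \setminus V_i$. A union bound over $i \in [3]$ then yields a single partition satisfying all three conclusions simultaneously; setting $\cP := \cP_1 \cup \cP_2 \cup \cP_3$ gives $|\cP| \le 633 n \le 636 n$.

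The remaining task is to verify that this $\cP$ separates $E(G)$. Given distinct edges $e, f \in E(G)$, since $e$ has only two endpoints, at least one of the three parts, say $V_i$, contains neither endpoint of $e$, so $e \in E(G \setminus V_i)$. If $f$ also lies in $E(G \setminus V_i)$, the separating property of $\cP_i$ directly supplies a path $P \in \cP_i$ containing $e$ but avoiding $f$. Otherwise $f$ has an endpoint in $V_i$ and is therefore not an edge of $G \setminus V_i$, so $f$ appears on no path of $\cP_i$; a path in $\cP_i$ through $e$ exists, because a separating system on $G \setminus V_i$ in particular covers every edge of $G \setminus V_i$, and this path automatically avoids $f$.

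Since \Cref{lem:sep-random-set} does all the heavy lifting, essentially no obstacle remains in the corollary; the only content is the random-partition trick and the short case analysis above. The key observation that makes the case analysis work is simply that every path in $\cP_i$ lies inside $G \setminus V_i$ and therefore cannot touch any edge incident to $V_i$, which neatly handles the case where $f$ is not in $E(G \setminus V_i)$. The hypotheses on $\eps$ and $s$ need to be checked to be compatible with those of \Cref{lem:sep-random-set} (up to possibly requiring $n$ large or strengthening $s$ by a constant factor), and for the finitely many small cases not covered by the probabilistic argument, $\cP$ can be taken to be the set of all individual edges.
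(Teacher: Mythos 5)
Your random-partition reduction is the same one the paper uses, but your case analysis contains a genuine gap: you assume that every path in $\cP_i$ lies inside $G \setminus V_i$, and this is false. \Cref{lem:sep-random-set} only asserts that the paths \emph{separate the edges of} $G \setminus V$; the paths themselves live in $G$, and in fact they must leave $G \setminus V$: they are built by taking matchings of $G' = G \setminus V$ and joining their edges into a single path through the random set $V$ (via \Cref{lem:connecting}), so the connecting edges all have an end in $V$. Consequently, when $f$ has an endpoint in $V_i$, a path of $\cP_i$ through $e$ may perfectly well contain $f$, and nothing in the lemma rules this out — the lemma's separation guarantee only applies to pairs of edges both lying in $G \setminus V_i$. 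This is exactly the case your argument leaves open, and it cannot be repaired by strengthening the lemma to confine its paths to $G \setminus V_i$, since the whole construction relies on routing through $V_i$.

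The paper closes this case by adding, for each $i \in [3]$, a collection $\cQ_i$ of at most $n$ paths that \emph{decomposes} $G[V_{i+1} \cup V_{i+2}]$ (via \Cref{thm:lovasz}). These paths genuinely lie in $G \setminus V_i$, so if $e \in G[V_{i+1} \cup V_{i+2}]$ and $f$ touches $V_i$, the path of $\cQ_i$ containing $e$ automatically avoids $f$. This is also why the stated bound is $3 \cdot 211n + 3n \le 636n$ rather than your $633n$: the extra $3n$ accounts precisely for the decompositions you omitted. Your remaining remarks (matching the hypotheses on $\eps$, $s$ and handling small $n$ trivially) are fine and in line with the paper.
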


	\begin{proof}
		Note that we may assume that $n$ is large (otherwise take $G_1 = G$ and $\cP = \emptyset$).
		Let $\{V_1, V_2, V_3\}$ be a random partition of $V(G)$, chosen uniformly at random.
		By \Cref{lem:sep-random-set}, with high probability, there is a collection of paths $\cP_i$ of size at most $211n$ that separates $G[V_{i+1} \cup V_{i+2}]$ (indices taken modulo $3$); fix an outcome of $V_1, V_2, V_3$ that has these properties.
		Let $\cQ_i$ be a collection of at most $n$ paths that decomposes $G[V_{i+1} \cup V_{i+2}]$, for $i \in [3]$.

		We claim that $\cP := \cP_1 \cup \cP_2 \cup \cP_3 \cup \cQ_1 \cup \cQ_2 \cup \cQ_3$ separates the edges of $G$. 
		Indeed, consider two edges $e, f$ in $G$, and pick $i \in [3]$ so that $e \in G[V_{i+1} \cup V_{i+2}]$ (notice that such $i$ exists but need not be unique).
		If $f \in G[V_{i+1} \cup V_{i+2}]$ then there is a path in $\cP_i$ that contains $e$ but not $f$, and otherwise there is a path in $\cQ_i$ that contains $e$ but not $f$.

		To complete the proof it suffices to observe that $|\cP| \le 3 \cdot 211 n + 3n \le 636n$.
	\end{proof}

	Finally, we prove the first key lemma (\Cref{lem:reduce-large-deg}), restated here.
	\lemReduceLargeDeg*

	\begin{proof}[Proof of \Cref{lem:reduce-large-deg}]
		We assume that $n$ is large (otherwise, we can take $G_1 = G$ and $\cP = \emptyset$).
		Take $s = (\log n)^{51}$, $\eps = 2^{-8}$ and $t = \frac{2n}{3}$.
		Apply \Cref{lem:decompose-expanders} to obtain a collection $\cH$ of pairwise edge-disjoint $(\eps, s)$-expanders that covers all but at most $48sn (\log n)^{3}$ edges of $G$ and satisfies $\sum_{H \in \cH}|H| \le 2n$. Let $G_1$ be the subgraph of $G$ spanned by edges not covered by $\cH$. Then $e(G_1) \le n(\log n)^{55}$ and the graphs in $\cH$ are weak $(\frac{\eps}{2}, s)$-expanders.

		Apply \Cref{cor:sep-dense-expander} (with $\frac{\eps}{2}$ and $s$) to each $H \in \cH$ to obtain a collection $\cP_H$ of at most $636 |H|$ paths that separates $H$. Let $\cQ_H$ be a collection of at most $|H|$ paths that decomposes $H$ (which exists by \Cref{thm:lovasz}). Take $\cP := \bigcup_{H \in \cH} (\cP_H \cup \cQ_H)$. 

		We claim that $\cP$ separates $G \setminus G_1$. 
		To see this, consider two edges $e, f$ in $G \setminus G_1$ and let $H \in \cH$ be a graph containing $e$. If $f \in H$ then $\cP_H$ has a path containing $e$ but not $f$, and otherwise $\cQ_H$ has such a path.

		To finish, note the following bound.
		\begin{align*}
			|\cP| & \le \sum_{H \in \cH} (|\cP_H| + |\cQ_H|) 
			\le \sum_{H \in \cH} 637|H| 
			\le 1300 n.
		\end{align*}
		So the graph $G_1$ and the collection of paths $\cP$ satisfy the requirements of the lemma.
	\end{proof}

\section{Expanding with forbidden sets} \label{sec:limited-contact}

	As a preparation for the proof of the second key lemma, in this section we prove a variant of Lemma 3.2 from \cite{liu2020solution}, which allows one to expand a set of vertices $A$ while avoiding another set $X$, provided that $X$ has a small intersection with each ball around $A$ (in \cite{liu2020solution} the authors list two other properties of $X$ that guarantee the expansion of $A$ while avoiding $X$, but we shall not use them). In both cases, the conclusion is that the radius-$i$ ball around $A$ in $G \setminus X$ is large. In \cite{liu2020solution} this was only shown for specific, fairly large values of $i$, whereas here we give a general lower bound that can be applied for any $i$, as per our needs later on.

	Following \cite{liu2020solution}, say that a vertex set $A$ has \emph{$k$-limited contact} with a vertex set $X$ in a graph $G$ if $\left|N_G(B_{H - X}^{i-1}(A)) \cap X\right| \le ki$ for every integer $i \ge 1$.
	The following lemma is similar to Lemma 3.2 in \cite{liu2020solution}, but tailored to our needs.
	\begin{lemma} \label{lem:expand-limited-contact}
		For every $\eps > 0$ there exists $k_0 = k_0(\eps)$ such that the following holds for $k \ge k_0(\eps)$.
		Suppose that $G$ is an $n$-vertex $\eps$-expander, and let $A$ and $X$ be disjoint subsets of $V(G)$ where $|A| \ge k^3$ and $A$ has $k$-limited contact with $X$. Then $\left|B_{G - X}^{i}(A)\right| > \min\{2^{i^{1/4}}, n/2\}$, for every integer $i \ge 1$.
	\end{lemma}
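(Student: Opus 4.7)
The plan is to set $B_i := B^i_{G-X}(A)$ and show $|B_i| > \min\{2^{i^{1/4}}, n/2\}$ by induction on $i$, splitting into a trivial small-$i$ regime and a growth regime. Let $i_0 := 81 (\log k)^4$, so that $i_0^{1/4} = 3 \log k$. Note that once $|B_i| \ge n/2$ the bound holds for all larger $i$ as well, so we only need to track indices with $|B_i| < n/2$; in this range \Cref{lem:decompose-expanders}-style expansion applies since $|B_i| < 2n/3$.

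In the small regime $i \le i_0$, the bound is immediate: $|B_i| \ge |A| \ge k^3 = 2^{3 \log k} = 2^{i_0^{1/4}} \ge 2^{i^{1/4}}$. For the growth regime $i \ge i_0$, combine the $\eps$-expansion of $G$ with the $k$-limited contact assumption (applied to $B_i$, which gives $|N_G(B_i) \cap X| \le k(i+1)$) to obtain
\begin{equation*}
	|B_{i+1}| \ge |B_i| + \frac{\eps |B_i|}{(\log|B_i| + 1)^2} - k(i+1).
\end{equation*}

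The technical core of the proof is to absorb the additive error $k(i+1)$ into the expansion term. Since $|B_i| \ge 2^{i^{1/4}}$ and the function $x/(\log x + 1)^2$ is essentially increasing (\Cref{claim:calc-1}), one has $|B_i|/(\log|B_i|+1)^2 \ge 2^{i^{1/4}}/(3(i^{1/4}+1)^2)$, and the inequality $k(i+1) \le \frac{\eps |B_i|}{2 (\log |B_i| + 1)^2}$ then reduces to $i^{1/4} \ge \log k + O_\eps(\log i)$, which holds comfortably throughout $i \ge i_0$ once $k \ge k_0(\eps)$. This yields the multiplicative recurrence
\begin{equation*}
	|B_{i+1}| \ge |B_i| \Bigl( 1 + \frac{\eps}{2(\log |B_i|+1)^2} \Bigr).
\end{equation*}

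Finally, setting $L_i := \log |B_i|$, the recurrence becomes $L_{i+1} \ge L_i + c/(L_i+1)^2$ for some $c = c(\eps) > 0$. Multiplying by $3 L_i^2$ and using $L_i \ge 3 \log k \ge 1$, one gets $L_{i+1}^3 - L_i^3 \ge 3c/4$, so $L_i^3 \ge L_{i_0}^3 + \tfrac{3c}{4}(i - i_0) \ge 27(\log k)^3 + \tfrac{3c}{4}(i-i_0)$. By concavity of $x \mapsto x^{3/4}$, the target $i^{3/4}$ satisfies $i^{3/4} \le i_0^{3/4} + \tfrac{3(i-i_0)}{4 i_0^{1/4}} = 27(\log k)^3 + \tfrac{i-i_0}{4 \log k}$, and the two estimates combine to give $L_i \ge i^{1/4}$ provided $c \ge 1/(3 \log k)$, which is exactly the condition that forces $k \ge k_0(\eps)$. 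The main obstacle is to balance these constants carefully so that both the additive error $k(i+1)$ is absorbed (pushing $k_0$ up with $\eps$) and the concavity slack $1/(4 \log k)$ is beaten by $c(\eps)$ (again forcing $k_0$ large), but all thresholds line up at $i_0 = 81(\log k)^4$ because $i_0^{1/4} = 3 \log k$ matches the $\log |A| \ge 3 \log k$ coming from the hypothesis $|A| \ge k^3$.
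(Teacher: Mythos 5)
Your proposal is correct and takes essentially the same route as the paper: use the $k$-limited contact to turn the sublinear expansion of $G$ into expansion of the balls $B_i$ in $G-X$, absorbing the additive loss $k(i+1)$ because $|B_i|$ is already large, and then iterate that expansion to reach size $2^{i^{1/4}}$. The differences are only bookkeeping --- the paper absorbs the loss via a bootstrap bound $|B_i| \ge \max\{k^3, i^3\}$ and concludes by contradiction using the uniform growth rate $1 + \tfrac{\eps}{2\sqrt{i}}$, whereas you absorb it using the inductive target $|B_i| \ge 2^{i^{1/4}}$ itself and sum the growth through the potential $(\log|B_i|)^3$ compared with the tangent-line bound on $i^{3/4}$ --- and both versions go through.
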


	\begin{proof}
		Write $H :=  G - X$ and $B_i := B_{H}^i(A)$ for $i \ge 0$. Our task is thus to show $|B_{i}| > \min\{2^{i^{1/4}}, n/2\}$ for $i \ge 1$. 
		\begin{claim} \label{claim:Bi}
			For every integer $i \ge 0$, either $|B_i| > n/2$ or $|N_{H}(B_i)| \ge \frac{\eps|B_i|}{2(\log|B_i|)^2}$.
		\end{claim}
		\begin{proof}
			We prove the claim by induction on $i$. Let $i \ge 0$ be such that $|B_i| \le n/2$ and suppose that the statement holds for $j$ with $0 \le j < i$. (We will prove the induction base and step simultaneously.)

			For $j \in [i]$, since $B_j$ is the disjoint union of $B_{j-1}$ and $N_H(B_{j-1})$, we have 
			\begin{align*}
				|B_j| \ge \left(1 + \frac{\eps}{2(\log|B_{j-1}|)^2}\right)|B_{j-1}|  
				\ge \left(1 + \frac{\eps}{2(\log|B_i|)^2}\right)|B_{j-1}|, 
			\end{align*}
			using that $B_{j-1} \subseteq B_i$.
			Iterating this, we get
			\begin{equation} \label{eqn:Bi}
				|B_i| \ge \left(1 + \frac{\eps}{2(\log|B_i|)^2}\right)^i |A|.
			\end{equation}
			(Notice that this inequality holds for $i = 0$, as $B_0 = A$, since $A$ is assumed to be disjoint of $X$, allowing us to treat the induction base and step simultaneously.)

			We conclude that $|B_i| \ge \max\{k^3, i^3\}$. Indeed, if $i \le k$ this holds because $|B_i| \ge |A| \ge k^3$. So suppose that $i > k$ and $|B_i| < i^3$. Then \eqref{eqn:Bi} implies 
			\begin{equation*}
				|B_i| \ge \left(1 + \frac{\eps}{2(\log i^3)^2}\right)^i|A|
				\ge \left(\exp\left(\frac{\eps}{4(\log i^3)^2}\right)\right)^i
				= \exp\left(\frac{\eps i}{36(\log i)^2}\right)
				\ge i^3,
			\end{equation*}
			using that $i$ is large and thus $\frac{\eps}{2(\log i^3)^2} \le 1$ (which follows from $k$ being large and the assumption $i > k$). This is a contradiction, proving that $|B_i| \ge \max\{k^3, i^3\}$.
			Write $m := \max\{k, i\}$. Then 
			\begin{equation} \label{eqn:Bi-vs-ki}
				\frac{\eps|B_i|}{(\log|B_i|)^2} \ge \frac{\eps m^3}{(\log m)^2} \ge 4m^2 \ge 2k(i+1),
			\end{equation}
			using that $m$ is large.

			Recall that $|X \cap N_G(B_i)| \le k(i+1)$ (this is what it means for $A$ to have $k$-limited contact with $X$) and notice that $N_G(B_i) - X \subseteq N_H(B_i)$. Thus, 
			\begin{align*}
				|N_H(B_i)| \ge |N_G(B_i)| - k(i+1) 
				\ge \frac{\eps |B_i|}{(\log |B_i|)^2} - k(i+1)
				\ge \frac{\eps|B_i|}{2(\log|B_i|)^2},
			\end{align*}
			where the second inequality follows from expansion (using $|B_i| \le n/2$), and the third inequality follows from \eqref{eqn:Bi-vs-ki}.
			This proves the induction hypothesis for $i$.
		\end{proof}

		Suppose that $i \ge 1$ satisfies $|B_i| \le \min\{2^{i^{1/4}}, n/2\}$. Since $|B_i| \ge |A| \ge k^3$, it follows that $2^{i^{1/4}} \ge k^3$. In particular, $i$ is large. By \Cref{claim:Bi}, every $j \in [i]$ satisfies
		\begin{align*}
			|B_j| = |B_{j-1}| + |N_H(B_{j-1})|
			\ge \left(1 + \frac{\eps}{2(\log|B_{j-1}|)^2}\right)|B_{j-1}|
			\ge \left(1 + \frac{\eps}{2\sqrt{i}}\right)|B_{j-1}|,
		\end{align*}
		using that $B_{j-1} \subseteq B_i$ and the assumption $|B_i| \le \min\{2^{i^{1/4}}, n/2\}$.
		Iterating this, we get
		\begin{align*}
			|B_i| \ge \left(1 + \frac{\eps}{2\sqrt{i}}\right)^i
			\ge \exp\left( \frac{\eps i}{4\sqrt{i}}\right)
			> 2^{i^{1/4}},
		\end{align*}
		using that $i$ is large, and yielding a contradiction.
	\end{proof}

\section{Proof of \Cref{lem:reduce-small-deg}: separating sparse expanders} \label{sec:sparse-expanders}

	In this section we prove the second key lemma, \Cref{lem:reduce-small-deg}. The proof will be split into two parts: \Cref{lem:bound-deg}, proved in \Cref{subsec:large-degree}, and \Cref{lem:sparse-expanders}, proved in \Cref{subsec:small-max-deg}. In both parts, we are presented with an expander $G$ on $n$ vertices and a number $d$, and we wish to separate edges of $G$ by $O(n + e(G)/d)$ paths. In the first part, we separate edges that touch large degree vertices, and in the second part we consider expanders with small maximum degree and many vertices.
	\Cref{lem:reduce-small-deg} itself is proved in \Cref{subsec:proof-lemma-small-deg}.

	\subsection{Dealing with large degree vertices} \label{subsec:large-degree}

		The following lemma shows that given an expander $G$ on $n$ vertices and a number $d$, there is a collection of $O(n + e(G)/d)$ paths that separate all edges of $G$ except for a remainder with maximum degree at most $d^7$. Take $L_1$ to be the set of vertices of degree at least $d^7$ and $L_2$ to be the set of vertices outside of $L_1$ that send at least four edges into $L_1$. We will let the remainder graph be $G - L_1$. Noting that we can separate the edges of $G[L_1, V(G) \setminus (L_1 \cup L_2)]$ by $O(n)$ single edges, it suffices to separate the edges in $G' := G[L_1] \cup G[L_1, L_2]$. 

		To do so, as usual we first decompose $G'$ into $O(n + e(G)/d)$ paths $\cP$ of length at most $d$. Then, for each $u \in L_2$ we pick a family $\cA_u$ of $d(u, L_1)$ paths of length $2$ with the central vertex being $u$ and the other two vertices in $L_1$, such that each edge between $u$ and $L_1$ is covered twice, and each such $2$-path intersects each path in $\cP$ in at most one edge. Write $\cA = \left(\bigcup_{u \in L_2}\cA_u\right) \cup E(G[L_1])$, so that $\cA$ is a collection of paths of length at most $2$. Then, we find a collection $\cM$ of $O(n + e(G)/d)$ graphs $M$, which are disjoint unions of at most $d$ paths of length at most $2$ from $\cA$, such that $\cP \cup \cM$ separates $G'$.
		Now it suffices to show that each $M \in \cM$ can be extended to a path that avoids using other edges in $E := \bigcup_{e \in E(M)} P(e)$ (where $P(e)$ is the unique path in $\cP$ that contains $e$). Take $F$ to be a linear forest that extends $M$, avoids other edges in $E$, and minimises the number of components and then edges under these assumptions. We show that $F$ must be a path, using \Cref{lem:expand-limited-contact} and that all leaves in $F$ have degree at least $d^7$ in $G$, and are thus easy to expand.

		\begin{lemma} \label{lem:bound-deg}
			Let $\eps > 0$ and let $d$ be sufficiently large. 
			Suppose that $G$ is an $n$-vertex $\eps$-expander. Then there is a subgraph $G_1 \subseteq G$ with maximum degree at most $d^7$ and a collection $\P$ of at most $17\max\{n, e(G)/d\}$ paths in $G$ that separates the edges of $G \setminus G_1$.
		\end{lemma}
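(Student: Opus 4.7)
The plan is to follow the outline of Step 2 in the proof overview. Set $L_1 := \{v \in V(G) : d_G(v) \ge d^7\}$, $L_2 := \{v \in V(G) \setminus L_1 : d_G(v, L_1) \ge 4\}$, and take $G_1 := G[V(G) \setminus L_1]$, so $G_1$ has maximum degree less than $d^7$ and $G \setminus G_1$ is the set of edges incident to $L_1$. The edges of $G[L_1, V(G) \setminus (L_1 \cup L_2)]$ can be separated by singleton paths: by definition of $L_2$, there are at most $3n$ of them. The remaining task is to separate $G' := G[L_1] \cup G[L_1, L_2]$.

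I would then apply \Cref{cor:lovasz} to decompose $G'$ into a collection $\cP_0$ of at most $e(G')/d + n$ paths of length at most $d$, writing $P(e)$ for the unique path of $\cP_0$ containing an edge $e \in E(G')$. Next, build a family $\cA$ of paths of length at most two covering $E(G')$: include every edge of $G[L_1]$ as a length-$1$ path, and for each $u \in L_2$ choose a family $\cA_u$ of $d_G(u, L_1)$ length-$2$ paths with centre $u$ and ends in $L_1$, arranged so that every edge $uv$ with $v \in L_1$ lies in exactly two members of $\cA_u$ and so that each member of $\cA_u$ meets every $P \in \cP_0$ in at most one edge. Constructing $\cA_u$ reduces to choosing a $2$-regular graph on $N_G(u) \cap L_1$ avoiding at most $\lfloor d_G(u, L_1)/2 \rfloor$ forbidden pairs, which is feasible because $|N_G(u) \cap L_1| \ge 4$. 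Then, using a greedy maximality argument in the spirit of \Cref{subsec:strategy} (with length-$\le 2$ paths in place of single edges, and with an extra $d$-cap on the size of each member), construct a collection $\cM$ of $O(n + e(G)/d)$ edge-disjoint graphs, each a vertex-disjoint union of at most $d$ members of $\cA$, such that every path of $\cP_0$ meets $E(M)$ in at most one edge for each $M \in \cM$, and every edge of $G'$ appears in $E(M)$ for some $M \in \cM$; the length cap of $d$ on members of $\cP_0$ is what keeps this count linear in $e(G)/d$.

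The heart of the proof, and the main obstacle, is to extend each $M \in \cM$ to a single path $P_M$ in $G$ whose edges in $E_M := \bigcup_{e \in E(M)} E(P(e))$ are exactly those of $M$. Fix $M$ and take a linear forest $F$ in $G$ with $M \subseteq F$ and $E(F) \cap E_M = E(M)$, chosen to minimise the number of components and, subject to that, the number of edges. Assume for contradiction that $F$ has at least two components, and let $u, v$ be leaves of distinct components; the ends of all members of $\cA$ lie in $L_1$, and edge-minimality prevents $F$ from terminating at low-degree vertices, so $u, v \in L_1$ and $d_G(u), d_G(v) \ge d^7$. Write $X := V(F) \setminus \{u, v\}$; then $|X|$ is at most polynomial in $d$. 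The key technical claim is that the edge-minimality of $F$ forces, via local swap arguments, a large subset $A$ of $N_G(u) \setminus X$ to have $O(1)$-limited contact with $X$ in $G$: many short detours from $u$ back into $V(F)$ would allow one to reduce the edge count of $F$. Since $|A|$ is polynomial in $d$ and so vastly exceeds any required constant cube, \Cref{lem:expand-limited-contact} yields $|B^i_{G - X}(A)| > n/2$ for some integer $i$; the analogous statement for $v$ then implies that these two balls meet, producing a short $(u,v)$-path in $G - X$. Adding this path to $F$ reduces its number of components, contradicting its minimality. Hence $F$ is a single path, which I take as $P_M$.

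Finally, let $\P$ be the union of the $3n$ singleton paths, the decomposition $\cP_0$, and the extensions $\{P_M : M \in \cM\}$. For distinct edges $e, f$ of $G \setminus G_1$: if either is in $G[L_1, V(G) \setminus (L_1 \cup L_2)]$ then a singleton separates them; otherwise both are in $G'$, and $P(e)$ separates them if $P(e) \ne P(f)$, while if $P(e) = P(f)$ then any $M \in \cM$ with $e \in E(M)$ gives a separating $P_M$ (since $f \in E_M \setminus E(M)$ forces $f \notin E(P_M)$). Tracking constants in the greedy construction, the size of $\P$ is at most $17 \max\{n, e(G)/d\}$, as required.
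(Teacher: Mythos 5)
Your plan is the paper's own: the same sets $L_1, L_2$, the same family $\cA$ of edges and $2$-paths covering each $L_1$--$L_2$ edge twice, the same greedy construction of $\cM$, and the same minimal-path-forest extension finished off by \Cref{lem:expand-limited-contact}. However, the extension step, which you correctly identify as the heart of the matter, has a genuine gap as written. In the final contradiction you connect $u$ to $v$ by a path in $G - X$ with $X = V(F)\setminus\{u,v\}$ and add it to $F$; but nothing prevents this path (or the detours in your ``local swap arguments'') from using edges of $E_M \setminus E(M)$, i.e.\ edges lying on the paths $P(e)$, $e \in E(M)$, away from $V(F)$. The augmented forest then violates your constraint $E(F) \cap E_M = E(M)$, so it is not an admissible competitor and no contradiction with minimality follows. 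The fix is the one the paper uses: also forbid the vertex set $X_M := V(E_M)\setminus V(M)$, which has size at most $2d^2$ (at most $2d$ edges in $M$, each path $P(e)$ having length at most $d$); every edge of a connecting path of length at least $2$ whose interior avoids $V(F)\cup X_M$ then has an end outside $V(E_M)$ and hence is not in $E_M$.

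Two of your quantitative claims are also wrong, and correcting them is what explains the exponent $7$. First, $|X| = |V(F)|-2$ is \emph{not} polynomial in $d$: minimising the number of components forces $F$ to contain connecting paths whose lengths are not bounded in terms of $d$; what you need (and what the swap argument actually delivers) is that balls around a leaf meet $V(F)$ in few vertices, e.g.\ $|N_G(B^{r-1}_{G-X_u}(u)) \cap V(F)| \le (r+4)d$ in the paper. Second, the swaps cannot give $O(1)$-limited contact: each of the up to $d$ connecting paths of $F$ may legitimately contribute about $r$ vertices to that neighbourhood without enabling a length-reducing reroute, and on top of this one must add the $\le 3d$ vertices of $M$ and the $\le 2d^2$ vertices of $X_M$. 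So the limited-contact parameter is $\Theta(d^2)$ (the paper uses $9d^2$), and \Cref{lem:expand-limited-contact} then needs $|A| \ge (9d^2)^3$ rather than ``any constant cube''; this is precisely why the leaves, which are ends of members of $\cA$ and hence in $L_1$, must have degree at least $d^7$. With the forbidden set enlarged by $X_M$ and the parameters corrected in this way, your argument becomes the paper's proof.
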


		\begin{proof}
			Write $m := e(G)$ and $k := \max(m/d, n)$.

			Let $L_1$ be the set of vertices in $G$ with degree at least $d^7$ and let $L_2$ be the set of vertices outside of $L_1$ that have at least four neighbours in $L_1$. Define $G_1 = G \setminus L_1$. We will find a collection $\P$ of at most $17\max\{n, e(G)/d\}$ paths in $G$ that separate the edges of $G \setminus G_1$.

			Write $H_1 := G[L_1] \cup G[L_1, L_2]$ and $H_2 := G[L_1, V(G) - (L_1 \cup L_2)]$. Let $\P_1$ be a collection of at most $n + m/d \le 2k$ paths of length at most $d$ that decomposes $H_1$; such a collection exists by \Cref{cor:lovasz}. 
			For an edge $e$ in $H_1$, let $P(e)$ be the unique path in $\cP_1$ that contains $e$.

			For each $u \in L_2$, let $v_1, \ldots, v_{d_u}$, where $d_u := d(u, L_1)$, be an ordering of the neighbours of $u$ in $L_1$ such that $uv_i$ and $uv_{i+1}$ are not in the same path in $\P_1$, for $i \in [d_u]$ (indices taken modulo $d_u$). Notice that such an ordering exists because $d_u \ge 4$ and the paths in $\P_1$ are pairwise edge-disjoint. Define $\cA_u := \{v_i u v_{i+1} : i \in [d_u]\}$ (with indices taken modulo $d_u$) for $u \in L_2$ and $\cA := \left(\bigcup_{u \in L_2} \cA_u\right) \cup E(G[L_1])$, so $\cA$ is a collection of edges and $2$-paths.

			\begin{claim}
				There is a collection $\cM$ of at most $12k$ graphs which are disjoint unions of at most $d$ paths of lengths at most $2$ from $\cA$, such that each path in $\cA$ is a component in exactly one graph $M \in \cM$; and every path in $\P_1$ shares at most one edge with any $M \in \cM$.
			\end{claim}
			\begin{proof}
				Let $\cM$ be a collection of $12k$ graphs that are disjoint unions of at most $d$ paths from $\cA$, such that: each $A \in \cA$ is a component in at most one $M \in \cM$; every path in $\P_1$ shares at most one edge with any $M \in \cM$; and a maximum number of paths in $\cA$ are components in some graph in $\cM$.
				We claim that every edge and $2$-path in $\cA$ is a component in some $M \in \cM$. Indeed, suppose not, and let $A \in \cA$ be an edge or $2$-path which is not a component in any $M \in \cM$.

				Notice that $|\cA| \le m$, so there are at most $m/d$ graphs $M \in \cM$ with $d$ components.

				We claim that every vertex $v \in L_1 \cup L_2$ is in at most $2n$ paths in $\cA$. Indeed, if $v \in L_1$ then there are at most $|L_1|$ edges in $\cA$ that contain $v$ and at most $2|L_2|$ paths of length $2$ in $\cA$ that contain $v$, because each edge in $G[L_1, L_2]$ is used by exactly two such $2$-paths. 
				If $v \in L_2$ then $v$ is in at most $2|L_1|$ paths of length $2$ in $\cA$ (and in no edges in $\cA$). Either way, there are at most $2|L_1| + 2|L_2| \le 2n$ paths in $\cA$ that contain $v$.
				As $|V(A)| \le 3$, it follows that $A$ shares a vertex with at most $6n$ paths in $\cA$. In particular, there are most $6n$ graphs $M \in \cM$ such that $A$ and $M$ share a vertex.

				Notice also that each path in $\P_1$ has at most $n$ edges and thus intersects the edges of at most $2n$ paths in $\cA$. Thus, there are at most $4n$ graphs $M \in \cM$ for which there is a path $P \in \P_1$ that shares an edge with both $A$ and $M$.

				In summary, since $m/d + 6n + 4n \le 11k < 12k$, there is $M \in \cM$ such that $M$ has fewer than $d$ components, $A$ and $M$ do not share a vertex, and there is no path $P \in \P_1$ that shares an edge with $A$ and $M$. Thus we can replace $M$ by $M \cup A$, contradicting the maximality of $\cM$.
			\end{proof}

			Let $\cM$ be a collection as guaranteed by the above claim. 
			For a graph $M \in \cM$, write $E_{M} = \bigcup_{e \in E(M)} E(P(e))$ and $X_M := V(E_M) \setminus V(M)$ (so $E_M$ is the set of edges that appear on paths from $\cP_1$ that share edges with $M$, and $X_M$ is the set of vertices on such paths, minus the vertices of $M$). Note that $|X_M| \le 2d^2$ (there are at most $2d$ edges in $M$, each of which causes the insertion of at most $d$ vertices into $X_M$). 

			\begin{claim} \label{claim:complete-paths}
				There is a path in $G$ that contains $M$ and avoids the edges in $E_M \setminus E(M)$, for $M \in \cM$.
			\end{claim}

			\begin{proof}
				Fix $M \in \cM$. For convenience, write $E := E_M$ and $X := X_M$.
				Let $\Q$ be a collection of pairwise vertex-disjoint paths of length at least $2$ in $G - X$ with the following properties.
				\begin{enumerate}[label = \rm(\alph*)]
					\item \label{itm:Q-struct}
						The union of $M$ and the paths in $\Q$ is a path forest whose components are paths that alternate between components of $M$ and paths in $\Q$, starting and ending with a path in $M$. In particular, $|\Q| \le d$.
					\item \label{itm:Q-max}
						Subject to \ref{itm:Q-struct}, $|\Q|$ is maximal.
					\item \label{itm:Q-min}
						Subject to \ref{itm:Q-struct} and \ref{itm:Q-max}, $\sum_{Q \in \Q}\ell(Q)$ is minimal.
				\end{enumerate}
				Denote the path forest, which is the union of $M$ and paths in $\Q$, by $F$. 

				Let $R$ be a component in $F$, let $u$ be an end of $R$, and let $X_u := (X \cup V(F)) - \{u\}$. We claim that the following holds for $r \ge 1$. 
				\begin{equation} \label{eqn:u-limited-contact}
					\left|N_{G}\!\left(B_{G - X_u}^{r-1}(u)\right) \cap V(F) \right| \le (r + 4)d.
				\end{equation}
				Suppose this is not the case. Because $|V(M)| \le 3d$ and $|\Q| \le d$, there is a path $Q \in \Q$ such that
				\begin{equation} \label{eqn:u-Q}
					\left|N_{G}\!\left(B_{G - X_u}^{r-1}(u)\right) \cap V(Q^{\circ}) \right| > \frac{(r+4)d - 3d}{d} \ge r+1,
				\end{equation}
				where $Q^{\circ}$ is the interior of $Q$.
				Let $R'$ be the component of $F$ that contains $Q$. Let $v$ be one of the ends of $Q$; if $R' = R$ take $v$ to be the further end of $Q$ from $u$ in $R$ (see \Cref{fig:two-paths,fig:one-path}).
				Let $w$ be the first vertex in $Q$ (starting from $v$) in $N_{G}(B_{G \setminus X_u}^{r-1}(u)) \cap V(Q^{\circ})$, and let $Q_1$ be the subpath of $Q$ that starts at $v$ and ends at $w$. 
				Take $Q_2$ to be a shortest path in $G$ from $u$ to $w$ whose interior vertices are not in $V(F) \cup X$. By choice of $w$ we have $\ell(Q_2) \le r$. Finally, let $Q'$ be the concatenation of $Q_1$ and $Q_2$ and define $\Q' := (\Q - \{Q\}) \cup \{Q'\}$. 
				\begin{figure}[ht]
					\centering
					\includegraphics[scale = 1]{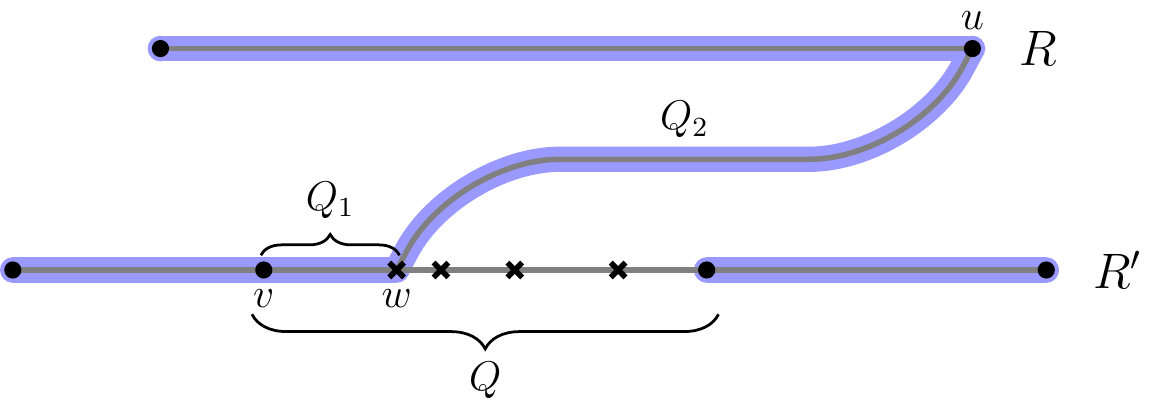}
					\vspace{-.4cm}
					\caption{The case $R' \neq R$. The crosses signify the vertices in $N_{G}\!\left(B_{G - X_u}^{r-1}(u)\right) \cap V(Q^{\circ})$, and the blue paths replace $R$ and $R'$ in the path forest $M \cup \cQ'$.}
					\label{fig:two-paths}
				\end{figure}
				\begin{figure}[ht]
					\centering
					\includegraphics[scale = 1]{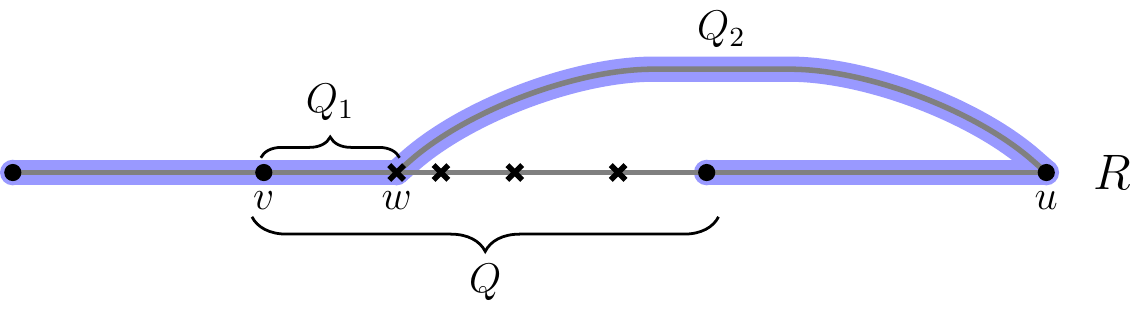}
					\vspace{-.2cm}
					\caption{The case $R' = R$. The crosses are as above, and the blue path replaces $R$ in the path forest $M \cup \cQ'$.}
					\label{fig:one-path}
				\end{figure}
				
				Notice that the paths in $\Q'$ are pairwise vertex-disjoint paths of length at least $2$ in $G - X$ (in particular, $Q'$ was indeed chosen to have length at least $2$ and be in $G - X$). 
				Notice also that $\Q'$ satisfies \ref{itm:Q-struct}. Indeed, if $R' = R$ then by choice of $v$ we have that $(R - Q) \cup Q'$ is a path, and if $R' \neq R$ then $((R \cup R') - Q) \cup Q'$ is a vertex-disjoint union of two paths. Other components of $F$ are not affected by the switch from $\Q$ to $\Q'$.
				Clearly, $|\Q'| = |\Q|$. Finally, notice that $\ell(Q') = \ell(Q_1) + \ell(Q_2) \le \ell(Q_1) + r < \ell(Q_1) + \ell(Q - Q_1)$, where the last inequality holds because $Q - Q_1$ is a path with at least $r$ interior vertices, due to \eqref{eqn:u-Q}. This implies that $\sum_{Q \in \Q'} \ell(Q) < \sum_{Q \in \Q} \ell(Q)$, contradicting \ref{itm:Q-min}, and thus proving \eqref{eqn:u-limited-contact}.

				We now conclude that $F$ is a path, namely it has exactly one component. Suppose not, and let $R_1$ and $R_2$ be two components in $F$. Let $u_j$ be an end of $R_j$, for $j \in [2]$. By \ref{itm:Q-struct}, both $u_1$ and $u_2$ are ends of paths in $M$, and thus have degree at least $d^7$ in $G$. Let $A_j := N_G(u_j) - (X \cup V(F))$.
				Then 
				\begin{equation*}
					|A_j| \ge |N_G(u_j)| - |X| - |N_{G}(u_j) \cap V(F)|
					\ge d^7 - 2d^2 - 5d \ge (9d^2)^3,
				\end{equation*}
				using \eqref{eqn:u-limited-contact} (with $r = 1$), $|X| \le 2d^2$ and that $d$ is large. Also,
				\begin{equation*}
					\left| N_G\Big(B_{G - V(F) - X}^{r-1}(A_j)\Big) \cap \big(V(F) \cup X\big) \right| \le (r+5)d + 2d^2 + 1 \le 9rd^2, 
				\end{equation*}
				using \eqref{eqn:u-limited-contact} (with $r+1$), where the $2d^2 + 1$ term accounts for $X \cup \{u\}$. 
				In particular, $A_j$ has $9d^2$-limited contact with $V(F)$ in $G'$ (the notion of limited contact was defined at the beginning of \Cref{sec:limited-contact}). Take $r = (\log n)^4$. Then, by \Cref{lem:expand-limited-contact}, $|B^r_{G - V(F) - X}(A_j)| > n/2$ for $j \in [2]$.  Hence, there is a path $Q$ in $G - V(F) - X$ joining $A_1$ and $A_2$. Let $Q'$ be the extension of $Q$ by edges from $u_j$ to $A_j$, for $j \in [2]$. 
				Take $\Q' := \Q \cup \{Q'\}$. Then $\Q'$ is a collection of pairwise vertex-disjoint paths of length at least $2$ in $G - X$ that satisfy \ref{itm:Q-struct} and $|\Q'| > |\Q|$, contradicting \ref{itm:Q-max}. This proves that $F$ consists of a single component. 

				We claim that $F$ satisfies the requirements of the claim. Clearly, $F$ contains $M$. Moreover, any edge in $F$ which is not in $E(M)$ is incident with at least one vertex which is not in $X_M \cup V(M) = V(E_M)$ and thus it is not in $E_M$.
			\end{proof}
			For $M \in \cM$, denote by $P_M$ the paths whose existence is guaranteed by \Cref{claim:complete-paths}.
			To prove the theorem, take $\P$ to be the union of $\P_1$, $\P_2$ and the edges of $H_2 = G[L_1, V(G) - (L_1 \cup L_2)]$ (each edge in $H_2$ is considered as a separate path in $\P$).
			Notice that $|\P| = |\P_1| + |\P_2| + e(H_2) \le 2k + 12k + 3n \le 17k = 17\max\{n, e(G)/d\}$. 
			
			We claim that $\P$ separates all edges in $G$ that touch $L_1$. To see this, consider two distinct edges, $e$ and $f$, that touch $L_1$. 
			If $e \in E(H_2)$ then $e$ itself is a path in $\cP$ (which obviously does not contain $f$). Otherwise, either the path $P(e)$ contains $e$ but not $f$, or there exists $M \in \cM$ that contains $e$ but not $f$ and then $P_M$ contains $e$ but not $f$.

			Recall that $G_1 = G - L_1$. Then $G_1$ has maximum degree at most $d^7$, and the previous paragraph shows that $\P$ separates the edges of $G \setminus G_1$. This proves \Cref{lem:bound-deg}.
		\end{proof}

	\subsection{Sparse expanders with small maximum degree} \label{subsec:small-max-deg}

		The next lemma shows that given a `somewhat robust' expander $G$ on $n$ vertices with maximum degree at most $d^7$, where $n$ is large in terms of $d$, the edges of $G$ can be separated by $O(n + e(G)/d)$ paths.
		The proof is very similar in structure to the proof of \Cref{lem:bound-deg}. We first decompose $G$ into paths $\cP$ and find matchings $\cM$, such that $|\cP|, |\cM| = O(n + e(G)/d)$, the union $\cP \cup \cM$ separates $G$, the paths in $\cP$ and matchings in $\cM$ have size at most $d$, and for any two edges $e, f$ in the same matching, the paths $P(e)$ and $P(f)$ are far apart in $G$ (we need $n$ to be large with respect to $d$ to be able to satisfy the last property).

		Now it remains to show that for each $M \in \cM$ there is a path that contains $M$ and avoids $E := \bigcup_{e \in M}P(e) \setminus E(M)$.
		As before, we take $F$ to be a path forest that extends $M$, avoids edges in $E$, and minimises the number of components and then edges. We again use \Cref{lem:expand-limited-contact} to deduce that $F$ is a path. This time we do so by first showing that small-radius balls around any leaf of $F$ expand well, using that such balls have few edges of $E$ (by assumption on $M$) and the moderate robustness property. We can then expand such balls further similarly to the proof of \Cref{lem:bound-deg}, using that we now start from sizeable sets.

		\begin{lemma} \label{lem:sparse-expanders}
			Let $\eps > 0$, let $d$ be sufficiently large, let $n \ge 2^{(\log d)^7}$, and let $s \ge (k_0(\eps)_{\ref{lem:expand-limited-contact}} + 3)^3$.
			Suppose that $G$ is an $n$-vertex $(\eps, s, d)$-expander with maximum degree at most $d^7$. Then there is a collection $\P$ of at most $5\max\{n, e(G)/d\}$ paths in $G$ that separates the edges of $G$.
		\end{lemma}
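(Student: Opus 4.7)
The plan is to closely mirror the proof of \Cref{lem:bound-deg}, with modifications needed because the maximum degree of $G$ is only $d^7$, so the crude neighbour-based expansion used there is unavailable. Set $k := \max\{n, e(G)/d\}$ and $r := (\log d)^5$. First I would apply \Cref{cor:lovasz} to obtain a collection $\cP$ of at most $2k$ paths of length at most $d$ that decomposes $E(G)$; write $P(e)$ for the unique path in $\cP$ containing the edge $e$. By a maximality argument parallel to the one in \Cref{lem:bound-deg}, I would then construct a collection $\cM$ of at most $3k$ pairwise edge-disjoint matchings covering $E(G)$, such that each $M \in \cM$ meets each path of $\cP$ in at most one edge, has $|M| \le d$, and is \emph{spread}: for any two distinct edges $e, f \in M$, the subgraphs $P(e)$ and $P(f)$ are at distance at least $2r + 2$ in $G$. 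The spread condition is the new ingredient, and it is where the hypothesis $n \ge 2^{(\log d)^7}$ enters: since $\Delta(G) \le d^7$ and each path in $\cP$ has at most $d + 1$ vertices, a single edge blocks at most $d \cdot (d^7)^{2r+2} \ll e(G)$ other edges from joining its matching, so the maximality argument still covers $E(G)$. As in the general strategy of \Cref{subsec:strategy}, $\cP \cup \cM$ then separates $E(G)$, and so it suffices to find, for each $M \in \cM$, a path $F_M$ in $G$ containing $M$ and avoiding $E_M \setminus E(M)$, where $E_M := \bigcup_{e \in M} E(P(e))$; the collection $\cP \cup \{F_M : M \in \cM\}$ would then give the claimed separating system of at most $5k$ paths.

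To produce $F_M$, I would follow \Cref{claim:complete-paths} and take a path forest $F$ containing $M$, avoiding the vertex set $X_M := V(E_M) \setminus V(M)$, and minimising first the number of components and then the total number of edges; it suffices to show that $F$ has only one component. Assuming otherwise, let $u$ be a leaf of some component and set $G' := G - X_M - (V(F) \setminus \{u\})$. The main obstacle is to grow an initial ball $B^r_{G'}(u)$ up to size at least $(k_0(\eps)+3)^3$: since $\Delta(G) \le d^7$, the single-step expansion used at the start of \Cref{claim:complete-paths} is not available, and one must instead use the $(\eps, s, d)$-expander property, which allows removing up to $s \cdot \min\{|U|, d\}$ edges per expansion step. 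The spread condition is exactly what makes this feasible, since within distance $r$ of $u$ in $G - X_M$ the forest $F$ meets only the component containing the matching edge at $u$, so at most $O(d)$ extra edges need be discarded per step --- well within the budget $s \cdot d$ allotted by the expander.

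Once $|B^r_{G'}(u)| \ge (k_0(\eps) + 3)^3$, I would invoke \Cref{lem:expand-limited-contact} with $A := B^r_{G'}(u)$ and $X := X_M \cup (V(F) \setminus \{u\})$ to push the reachable set past $n/2$ vertices; the required limited-contact bound follows from the minimality of $F$ via the rerouting argument that gave \eqref{eqn:u-limited-contact} in \Cref{claim:complete-paths}. Running the same argument from a leaf $v$ of a different component of $F$, the two reachable sets must intersect, yielding a $(u,v)$-path in $G'$ that merges two components of $F$ and contradicts minimality. Hence $F = F_M$ is a single path, completing the plan. The principal technical difficulty is therefore the interplay between the spread property of $\cM$ and the robustness afforded by the $(\eps, s, d)$-expander structure during the initial ball-growth step; the remainder of the argument is a direct adaptation of \Cref{lem:bound-deg}.
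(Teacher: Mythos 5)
Your overall architecture matches the paper's: the same decomposition into at most $2k$ paths of length at most $d$, the same collection of $3k$ ``spread'' matchings obtained by a maximality argument whose counting uses $\Delta(G)\le d^7$ and $n\ge 2^{(\log d)^7}$, and the same minimal path forest $F$ whose connectedness is to be forced via \Cref{lem:expand-limited-contact}. The genuine gap is in the expansion step. You grow the initial ball around a leaf $u$ only to the \emph{constant} size $(k_0(\eps)+3)^3$ and then apply \Cref{lem:expand-limited-contact} once, with $A:=B^{r}_{G'}(u)$ and $X:=X_M\cup(V(F)\setminus\{u\})$. But that lemma requires $A$ to have $k$-limited contact with $X$ for some $k$ with $|A|\ge k^3$; with $|A|$ constant you are forced to take $k$ constant, and no constant contact bound is available once the growing ball leaves the radius-$r$ zone around $u$ (where $r=(\log d)^5$): beyond that radius it can meet the paths $P(f)$ of the other matching edges, which contribute up to $|X_M|\le 2d^2$ vertices and possibly $\Theta(d)$ of them in a single step, and the rerouting/minimality argument only bounds contact with $V(F)$ by roughly $d(r'+r)$ at radius $r'$ (the analogue of \eqref{eqn:limited-contact-long}). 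So the correct limited-contact parameter for this final application is of order $d^2$, which forces $|A|\ge\Omega(d^6)$; a constant-sized $A$ does not satisfy the hypothesis. This is precisely why the paper runs two stages: within radius $r$ the spread condition guarantees that the only forbidden path is $P(uv)$ (absorbed by the $(\eps,s,d)$-robustness, which is where $s\ge(k_0+3)^3$ and the resulting minimum degree are used), the rerouting argument gives contact at most $r'+1$ with $V(F)$, and a first application of \Cref{lem:expand-limited-contact}, started from $N(u)$, pushes the ball to size at least $d^8$ (see \eqref{eqn:expand-u-small}); only then is the second application, with limited-contact parameter $3d^2$, legitimate and reaches more than $n/2$ vertices.

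A secondary inaccuracy: your justification of the first stage --- ``within distance $r$ of $u$ the forest $F$ meets only the component containing the matching edge at $u$'' --- does not follow from the spread condition. Spreadness controls the mutual distance of the paths $P(e)$, $e\in M$, but the connector paths of $F$ belonging to other components may pass arbitrarily close to $u$; the bound on contact with $V(F)$ inside radius $r$ must itself come from the minimality/rerouting argument (the analogue of \eqref{eqn:limited-contact-short} rather than \eqref{eqn:u-limited-contact}). With these two corrections --- contact with $F$ controlled by rerouting in both ranges, and the initial ball grown to polynomial size in $d$ before the final application of \Cref{lem:expand-limited-contact} --- your plan coincides with the paper's proof.
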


		\begin{proof}
			\def \rr {r_0}
			As before, write $m := e(G)$ and $k := \max\{n, m/d\}$.
			Let $\P_1$ be a collection of at most $n + m/d \le 2k$ paths of length at most $d$ that decompose the edges in $G$; it exists by \Cref{cor:lovasz}.
			For an edge $e$ denote by $P(e)$ the unique path in $\P_1$ that contains $e$.
			Write $\rr = (\log d)^5$.

			\begin{claim}
				There is a decomposition $\cM$ of $G$ of size $3k$ such that every $M \in \cM$ satisfies: $M$ is a matching of size at most $d$; and the paths $P(e)$ and $P(f)$ are a distance at least $2\rr$ apart for every distinct $e, f \in M$.
			\end{claim}

			\begin{proof}
				Let $\cM$ be a collection of $3k$ pairwise edge-disjoint matchings that satisfy the conditions of the claim, and together cover a maximum number of edges in $G$. We claim that $\cM$ covers all edges of $G$. Suppose not, and let $e \in E(G) - \bigcup \cM$.

				As $e(G) = m$, there are at most $m/d$ matchings $M \in \cM$ of size $d$.

				We have the following upper bound on the ball of radius $2\rr$ around $P(e)$.
				\begin{equation*}
					\big|B_G^{2\rr}(P(e))\big| \le |P(e)| \cdot (1 + d^7 + \ldots + (d^7)^{2\rr}) \le d^{14\rr + 2},
				\end{equation*}
				using the maximum degree assumption, that $|P(e)| \le d+1$, and that $d$ is large. 
				Notice that every $P \in \P_1$ intersects the edges of at most $d$ matchings $M \in \cM$, implying that each edge $f$ is in $\bigcup_{h \in M} P(h)$ for at most $d$ matchings $M \in \cM$. It follows from the maximum degree assumption that every vertex $u$ is in $\bigcup_{h \in M}P(h)$ for at most $d^8$ matchings $M \in \cM$.
				Hence, the sets $B_G^{2\rr}(P(e))$ and $\bigcup_{h \in M}P(h)$ share a vertex, for at most $d^8 \cdot d^{14\rr+2} \le d^{14\rr+10} \le 2^{(\log d)^7} \le n$ matchings $M$, where for the last inequality we used the assumption on $n$ and $d$. In other words, $P(e)$ is at distance at least $2\rr$ from $\bigcup_{h \in M}P(h)$, for all but at most $n$ matchings $M \in \cM$.

				To summarise, since $m/d + n < 3k$, there is a matching $M \in \cM$ such that $|M| < d$ and $P(e)$ and $\bigcup_{h \in M}P(h)$ are a distance at least $2\rr$ apart. We can thus replace $M$ by $M \cup \{e\}$, a contradiction to the maximality of $\cM$.
			\end{proof}

			Let $\cM$ be a decomposition of $G$ into matchings, as guaranteed by the above claim. For an edge $e$, denote by $M(e)$ the unique matching in $\cM$ that contains $e$.

			\begin{claim} \label{claim:complete-b}
				There is a path in $G$ that contains all edges in $M$ and avoids the edges in $\bigcup_{e \in M} (P(e) - \{e\})$, for $M \in \cM$.
			\end{claim}

			\begin{proof}
				Fix $M \in \cM$, and write $E := \bigcup_{e \in M} P(e)$ and $G' := G - E$.
				Let $\Q$ be a collection of pairwise vertex-disjoint paths in $G'$ with the following properties.
				\begin{enumerate} [label = \rm(\alph*)]
					\item \label{itm:Qb-struct}
						The union of $M$ with the paths in $\Q$ is a path forest whose paths alternate between $M$ and $\Q$, starting and ending with an edge from $M$.
					\item \label{itm:Qb-max}
						Subject to \ref{itm:Qb-struct}, $|\Q|$ is maximal.
					\item \label{itm:Qb-min}
						Subject to \ref{itm:Qb-struct} and \ref{itm:Qb-max}, $\sum_{Q \in \Q}\ell(Q)$ is minimal.
				\end{enumerate}
				Denote the path forest which is the union of $M$ and $\Q$ by $F$.

				Let $u$ be an end of a path in $F$, let $v$ be the neighbour of $u$ in $F$ (so $uv \in M$), let $R$ be the component in $F$ that contains $u$, and write $X_u := V(F) - \{u\}$. We claim the following, for $r < \rr$,
				\begin{equation} \label{eqn:limited-contact-short}
					\left| N_G\left(B_{G' - X_u}^{r-1}(u)\right) \cap V(F) \right| \le r+1.
				\end{equation}
				Suppose this is violated for some $r \le \rr$. As edges in $M$ are a distance at least $2\rr$ apart in $G$, the set $N_G(B_{G' - X_u}^{r-1}(u)) \cap V(M)$ is contained in $\{v\}$, so it has size at most $1$. Hence there is a path $Q \in \Q$ such that either $v$ is not an end of $Q$ and $N_G(B_{G' - X_u}^{r-1}(u)) \cap V(Q^{\circ}) \neq \emptyset$ or $v$ is an end of $Q$ and $\big|N_G(B_{G' - X_u}^{r-1}(u)) \cap V(Q^{\circ})\big| \ge r+1$.

				Suppose that the former holds, and let $Q \in \Q$ be a path, none of whose ends is $v$, that satisfies $N_G(B_{G' - X_u}^{r-1}(u)) \cap V(Q^{\circ}) \neq \emptyset$. Let $w_0$ be a vertex in this intersection, let $Q_0$ be a shortest path in $G'$ from $u$ to $w_0$ whose interior avoids $X_u$; so $\ell(Q_0) \le r < \rr$.
				Denote the ends of $Q$ by $w_1$ and $w_2$, where if $Q$ is in $R$ then $w_1$ is further away from $u$ in $R$. Let $Q_i$ be the subpath of $Q$ from $w_0$ to $w_i$, for $i \in [2]$. Notice that $\ell(Q_i) \ge \rr$, because of the assumption that edges in $M$ are a distance at least $2\rr$ apart in $G$; indeed, otherwise the concatenation $Q_0Q_i$ of $Q_0$ and $Q_i$ is a path of length shorter than $2\rr$ between two distinct edge in $M$.
				Write $\Q' := \Q - \{Q\} \cup \{Q_0Q_1\}$.
				It is easy to check that $\Q'$ is a collection of vertex-disjoint paths in $G'$ that satisfies \ref{itm:Qb-struct}, $|\Q'| = |\Q|$, and $\sum_{Q \in \Q'} \ell(Q) < \sum_{Q \in \Q} \ell(Q)$, contradicting \ref{itm:Qb-min}. 

				It remains to consider the second case, namely that $Q \in \Q$ has ends $w_1$ and $w_2$, where $w_2 = v$, and $\big|N_G(B^{r-1}_{G' - X_u}(u)) \cap V(\Q^{\circ})\big| \ge r+1$. Let $w_0$ be the vertex in the intersection which is furthest away from $w_2$ in $Q$ and let $Q_0$ be a shortest path from $u$ to $w_0$ in $G'$ whose interior avoids $X_u$; so $\ell(Q_0) \le r$.
				Let $Q_i$ be the subpath of $Q$ from $w_0$ to $w_i$, for $i \in [2]$. Then $\ell(Q_2) \ge r + 1$ by assumption on the intersection size $\big|N_G(B^{r-1}_{G' - X_u}(u) \cap V(\Q^{\circ})\big|$. 
				Defining $\Q' := \Q - \{Q\} \cup \{Q_0w_0Q_1\}$, we reach a contradiction as in the previous paragraph. Thus \eqref{eqn:limited-contact-short} is proved.
				
				We now conclude the following. 
				\begin{equation} \label{eqn:expand-u-small}
					\left| B_{G' - X_u}^{\rr}(u) \right| \ge 2^{(\log d)^2} \ge d^8.
				\end{equation}
				Write $G'' := G - P(uv)$. We claim that $B_{G' - X_u}^r(u) = B_{G'' - X_u}^r(u)$ for every $r \le \rr$. Indeed, this holds because $G'' = G' - \bigcup_{f \in M - \{uv\}} P(f)$, and $P(f)$ is a distance at least $2\rr$ away from $uv$ in $G$ for every $f \in M - \{uv\}$. In particular, \eqref{eqn:limited-contact-short} implies
				\begin{equation} \label{eqn:G''}
					\left| N_{G''}\left(B^{r-1}_{G'' - X_u}(u)\right) \cap X_u \right| \le r + 1,
				\end{equation}
				for $r < \rr$.
				We claim further that $G''$ is an $\eps$-expander. To see this, let $X \subseteq V(G'')$ satisfy $1 \le |X| \le 2n/3$. Let $E''$ be the set of edges from $P(uv)$ that touch $X$. Then $|E''| \le \min\{2|X|, d\}$ and $N_{G''}(X) = N_{G - E''}(X)$. Thus, as $G$ is an $(\eps, s, d)$-expander and $s \ge 2$, $|N_{G''}(X)| = |N_{G - E''}(X)| \ge \frac{\eps|X|}{(\log|X|+1)^2}$, as required for $\eps$-expansion. Note also that $G$ has minimum degree at least $s+1$, by $(\eps, s, d)$-expansion, implying that $G''$ has minimum degree at least $s-1$. Write $A_u := N_{G'' - X_u}(u) \cup \{u\}$. Then \eqref{eqn:G''} implies $|A_u| \ge |N_{G''}(u)| - 2 \ge s-3$ and $\big|N_{G''}(B^{r-1}_{G'' - X_u}(A_u)) \cap X_u\big| \le r+2 \le 3r$ for $r \le \rr-2$. Write $X_u' := X_u \cap N_{G''}(B^{\rr-3}_{G'' - X_u}(u))$. Then $A_u$ has $3$-limited contact with $X_u'$. Using that $s$ is large, \Cref{lem:expand-limited-contact} shows $|B_{G'' - X_u'}^{\rr-2}(A_u)| \ge 2^{(\rr-2)^{1/4}} \ge 2^{(\log d)^2} \ge d^8$. This proves \eqref{eqn:expand-u-small}, as $B_{G''-X_u}^{\rr-1}(u) = B_{G'' - X_u'}^{\rr-2}(A_u)$.
				
				Next, we follow the proof of \Cref{claim:complete-paths} quite closely to show that the set $B_{G' - X_u}^{\rr}(u)$ expands.
				Define $V := \bigcup_{e \in M} V(P(e))$, $B_u := B_{G' - X_u}^{\rr}(u) - V$ and $Y := V \cup V(F)$. Notice that $|B_u| \ge d^8 - d(d+1) \ge d^7$ by \eqref{eqn:expand-u-small} and since $|V| \le d(d+1)$. We claim the following for $r \ge 1$.
				\begin{equation} \label{eqn:limited-contact-long}
					\left| N_G\!\left(B^{r-1}_{G - Y}(B_u)\right) \cap V(F) \right| \le d(r + \rr + 1).
				\end{equation}
				The proof of this is very similar to that of \eqref{eqn:u-limited-contact} in \Cref{claim:complete-paths}.
				Suppose this does not hold for some $r \ge 1$. Because $|V(M)| \le 2d$ and $|\Q| \le d$, there is a path $Q \in \Q$ such that
				\begin{equation*}
					\big| N_G(B^{r-1}_{G - Y}(B_u)) \cap V(Q^{\circ}) \big| \ge \frac{d(r + \rr + 3) - 2d}{d} \ge r+\rr+1.
				\end{equation*}
				Let $R'$ be the component in $F$ that contains $Q$. Denote the ends of $Q$ by $w_1$ and $w_2$, where $w_1$ is further from $u$ in $R$ if $R' = R$. Let $w_0$ be the vertex in $N_G(B^{r-1}_{G - Y}(B_u)) \cap V(Q^{\circ})$ which is closest to $w_1$ in $Q$, and let $Q_0$ be a shortest path in $G'$ from $u$ to $w_0$ whose interior avoids $Y$; so $\ell(Q_0) \le r+\rr$. 
				Let $Q_i$ be the subpath of $Q$ from $w_0$ to $w_i$, for $i \in [2]$; then $\ell(Q_2) \ge r+\rr+1$. Taking $\Q' := \Q - \{Q\} \cup \{Q_0 w_0 Q_1\}$ yields the desired contradiction to \ref{itm:Qb-min}. Thus \eqref{eqn:limited-contact-long} is proved.

				Next, we deduce the following.
				\begin{equation} \label{eqn:expand-u-large}
					\big| B_{G - Y}^{(\log n)^4}(B_u) \big| > n/2.
				\end{equation}
				Notice that \eqref{eqn:limited-contact-long} implies $\big|N_G(B_{G - Y}^{r-1}(B_u)) \cap Y\big| \le d(r + \rr + d + 2) \le 3d^2r$ for $r \ge \rr$ (using that $d$ is large), i.e.\ $B_u$ has $3d^2$-limited contact with $Y$ in $G$. As $|B_u| \ge d^7 \ge (3d^2)^3$, \Cref{lem:expand-limited-contact} proves \eqref{eqn:expand-u-large}.

				Finally, we conclude that $F$ consists of a single component; namely, it is a path. Suppose not, and let $u_1$ and $u_2$ be ends of distinct components in $F$. By \eqref{eqn:expand-u-large}, the sets $B^{(\log n)^4}_{G - Y}(B_{u_1})$ and $B^{(\log n)^4}_{G - Y}(B_{u_2})$ intersect, showing that there is a path $Q$ in $G'$ from $u_1$ to $u_2$ whose interior avoids $V(F)$. 
				Taking $\Q' := \Q \cup \{Q'\}$, we reach a contradiction to \ref{itm:Qb-max}. 
				We have thus proved that $F$ is a path. As it contains $M$ and has no other edges in $\bigcup_{e \in M} P(e)$, it satisfies the requirements of \Cref{claim:complete-b}. 
		\end{proof}
		For $M \in \cM$, let $P_M$ be a path as guaranteed by \Cref{claim:complete-b}. Write $\cP_2 := \{P_M : M \in \cM\}$, and take $\P := \P_1 \cup \P_2$. Then $|\P| \le |\P_1| + |\P_2| \le 5k$. Notice that $\P$ separates the edges of $G$. Indeed, given $e, f \in E(G)$, then one of $P(e)$ and $M(e)$ contains $e$ but not $f$.
		Thus \Cref{lem:sparse-expanders} is proved.
	\end{proof}

	\subsection{Proof of \Cref{lem:reduce-small-deg}} \label{subsec:proof-lemma-small-deg}

		Finally, we put everything together to prove \Cref{lem:reduce-small-deg}, restated here. The proof follows quite straightforwardly from \Cref{lem:bound-deg,lem:sparse-expanders}.

		\lemReduceSmallDeg*

		\begin{proof}[Proof of \Cref{lem:reduce-small-deg}]
			Let $G$ be a graph on $n$ vertices with average degree $d$, where $d$ is large. Take $\eps := 1/48$ and apply \Cref{lem:decompose-expanders} with $\eps$, $s = 0$ and $t = 2n/3$ (the last choice does not matter as $t$ does not play a role when $s = 0$) to obtain a collection $\HH$ of $\eps$-expanders that decomposes $G$ and satisfies $\sum_{H \in \HH} |H| \le 2n$.
			
			Apply \Cref{lem:bound-deg} to each $H \in \HH$, to obtain a subgraph $F_H \subseteq H$ and a collection $\P_H$ of paths in $H$ such that: $F_H$ has maximum degree at most $d^7$; $|\P_H| \le 17 \max\{|H|, e(H)/d\}$; and $\P_H$ separates the edges of $H \setminus F_H$. Let $\P'_H$ be collection of at most $|H|$ paths decomposing $H \setminus F_H$ (which exists by \Cref{thm:lovasz}).

			Now, for each $H \in \HH$, apply \Cref{lem:decompose-expanders} to $F_H$ with $\eps$, $s_{\ref{lem:decompose-expanders}} = \left((k_0(\eps))_{\ref{lem:expand-limited-contact}} + 3\right)^3$ and $t_{\ref{lem:decompose-expanders}} = d$, to obtain a collection $\F_H$ of pairwise edge-disjoint subgraphs of $F_H$ that cover all but at most $48s|F_H|(\log d + 1)^2$ edges of $F_H$ and satisfy $\sum_{F \in \F_H}|F| \le 2|F_H|$.
			Denote by $E_H$ the set of edges in $H$ uncovered by $\F_H$. 

			Let $\F_1$ be the collection of graphs $F \in \bigcup_{H \in \HH}\F_H$ that satisfy $|F| \ge 2^{(\log d)^7}$ and let $\F_2 := (\bigcup_{H \in \HH}F_H) - \F_1$. Apply \Cref{lem:sparse-expanders} to each $F \in \F_1$ to obtain a collection $\Q_F$ of paths in $F$ such that $|\Q_F| \le 5 \max\{|F|, e(F) / d\}$ and $\Q_F$ separates the edges of $F$.

			Take $\P$ to be the union of the collections $\P_H, \P'_H$, for $H \in \HH$, and $\Q_F$, for $F \in \F_1$. Define $G_1$ to be the graph on vertices $V(G)$ and edges $\bigcup_{H \in \HH} E_H$. We show that the statement of \Cref{lem:reduce-small-deg} holds (with $\HH_{\ref{lem:reduce-small-deg}} = \F_2$). Notice that the graphs in $\F_2$ indeed are pairwise edge-disjoint, because the graphs in $\HH$ are pairwise edge-disjoint and so are the graphs in $\F_H$ for every $H \in \HH$. Moreover, $|F| \le 2^{(\log d)^7}$ for every $F \in \F_2$, by choice of $\F_2$.
			Finally,
			\begin{align*}
				\sum_{F \in \F_1 \cup \F_2}|F| 
				= \sum_{H \in \HH} \sum_{F \in \F_H} |F|
				\le \sum_{H \in \HH} 2|F_H|
				\le \sum_{H \in \HH} 2|H|
				\le 4n,
			\end{align*}
			using the choice of $\HH$ and $\F_H$. In particular, $\sum_{F \in \F_2} |F| \le 4n$. Thus \ref{itm:key-two-HH} is proved.

			Next, notice that
			\begin{align*}
				|\P| 
				& \le \sum_{H \in \HH} \left(|\P_H| + |\P'_H|\right) + \sum_{F \in \F_1} |Q_F| \\
				& \le \sum_{H \in \HH} 18(|H| + e(H)/d) + \sum_{F \in \F_1}5(|F| + e(F)/d) \\
				& \le 36n + 18e(G)/d + 20n + 5e(G)/d
				\le 80n.
			\end{align*}
			For the second inequality we used that the graphs in $\HH$ are pairwise edge-disjoint and similarly for $\F_1$, and for the last inequality we used that $G$ has average degree $d$.

			We claim that $\P$ separates the edges of $G' := G - G_1 - \bigcup_{F \in \F_2} F$.
			Indeed, let $e$ and $f$ be distinct edges in $G'$, and let $H \in \HH$ satisfy $e \in H$.  
			If $e \in H \setminus F_H$, then if $f \notin H \setminus F_H$ there is a path in $\cP_H'$ that contains $e$ but not $f$, and otherwise there is such a path in $\cP_H$. Now suppose $e \in F_H$ and let $F \in \F_H$ be a graph that contains $e$. Then $\cQ_F$ has a path that contains $e$ but not $f$. This proves \ref{itm:key-two-P}.

			Finally, we have
			\begin{align*}
				2e(G_1) 
				\le 2\sum_{H \in \HH} |E_H|
				\le \sum_{H \in \HH} 96s|F_H|(\log d + 1)^2
				\le \frac{1}{2}\sum_{H \in \HH} |H| (\log d)^3
				\le n (\log d)^3,
			\end{align*}
			using that $d$ is large and $s$ is a constant (it depends only on $\eps$, which itself is a constant). This completes the proof of \ref{itm:key-two-G1} and \Cref{lem:reduce-small-deg}.
		\end{proof}

\section{Conclusion} \label{sec:conc}

	Recall that $\wsep(G)$ is the size of a smallest weakly-separating path system for $G$, and $\wsep(n)$ is the maximum of $\wsep(G)$ over all $n$-vertex graphs $G$; $\sep(G)$ and $\sep(n)$ are defined analogously for strong separation.

	In this paper we proved that $\sep(n) = O(n \logstar n)$. This is significant progress from the easy initial bound of $O(n \log n)$, towards a conjecture of, independently, Falgas-Ravry--Kittipassorn--Kor\'andi--Narayanan and the author \cite{falgas2013separating}, and Balogh--Csaba--Martin--Pluh\'ar \cite{balogh2016path}, that $\sep(n) = O(n)$.

	It is plausible that our methods could be used to fully settle this conjecture. For example, one could try to prove a statement like the following, by induction on $n$. Suppose that $G$ is an $\eps$-expander and let $H$ be a subgraph of $G$ on $n$ vertices. Then the edges of $H$ can be separated using $O(n)$ paths in $G$. To accomplish this, it would suffice to find $O(n)$ paths in $G$ that separated all edges of $H$ touching a set $V$ of $\Omega(n)$ vertices in $H$. An obstruction to achieving this is a graph $H$ as follows: $H$ has average degree $d$, but almost all vertices in $H$ have degree much lower than $d$ (say, at most $\log \log d$), and, moreover, there are relatively small sets of low degree vertices in $H$ with few low degree neighbours.
	
	In fact, the authors of \cite{falgas2013separating} suggest that, perhaps, $\wsep(n) = (1 + o(1))n$; it is easy to see that this bound would be tight by considering complete graphs. If true, this is likely to be very hard to prove. Indeed, even the restriction to complete graph seems hard. Wickes \cite{wickes2022separating} proved that $\wsep(K_n) \le (\frac{21}{16} + o(1))n$ for any integer $n$, and that $\wsep(K_n) \le n$ when $n = p \text{ or } p+1$ for some prime $p$. It would be nice, and perhaps not completely out of reach, to prove that $\wsep(K_n) = (1 + o(1))n$. It also makes sense to study $\sep(K_n)$.

\subsection*{Acknowledgements}

	I would like to thank Ant\'onio Gir\~ao for reminding me of this problem and making the connection to sublinear expansion.
	I am grateful to Matija Buci\'c and Richard Montgomery for insightful discussions regarding their very nice paper \cite{bucic2022erdos} and for giving me a copy of an early draft.
	Some of the research leading up to this paper was done while the author attended the MFO workshop Combinatorics, Probability and Computing, held in Oberwolfach in April 2022, and the BIRS workshop Cross Community Collaborations in Combinatorics, held in Banff in June 2022. I am thankful to the organisers of both workshops for inviting me, and to MFO and BIRS for providing great infrastructure for research.
	I would also like to thank the kind anonymous referee for carefully reading a previous version of this paper, and for their various helpful suggestions.

\bibliography{sep}
\bibliographystyle{amsplain}

\end{document}